\DeclareMathAlphabet\mathcalbf{OMS}{cmsy}{b}{n}
\DeclareMathAlphabet\EuScript{U}{eus}{m}{n}
\DeclareMathAlphabet\EuScriptBold{U}{eus}{b}{n}
\numberwithin{equation}{section}
\newtheorem{theorem}{Theorem}[section]
\newtheorem{lemma}[theorem]{Lemma}
\newtheorem{corollary}[theorem]{Corollary}
\newtheorem{proposition}[theorem]{Proposition}
\newtheorem{example}[theorem]{Example}
\newtheorem{remark}[theorem]{Remark}
\newcommand{\diam}{\operatorname{diam}}
\def\R{\mathbb R}
\def\C{\mathbb C}
\begin{document}
\allowdisplaybreaks

\title[Commutators of maximal functions on spaces of homogeneous type]
{Commutators of maximal functions on spaces of homogeneous type and their weighted, local versions}
\author{ Zunwei Fu,  Elodie Pozzi and Qingyan Wu}

\address{Zunwei Fu, Department of Mathematics\\
         Linyi University\\
         Shandong, 276005, China
         }
\email{fuzunwei@lyu.edu.cn}

\address{Elodie Pozzi, Department of Mathematics and Statistics\\
         Saint Louis University\\
         220 N. Grand Blvd, 63103 St Louis MO, USA
         }
\email{elodie.pozzi@slu.edu}

\address{Qingyan Wu, Department of Mathematics\\
         Linyi University\\
         Shandong, 276005, China
         }
\email{wuqingyan@lyu.edu.cn}

\subjclass[2010]{Primary 42B35, Secondary 30L99,
42B25.}

\date{\today}


\keywords{maximal function, space of
homogeneous type, BMO space, weight, commutator}

\begin{abstract}
We establish the characterizations of commutators of several versions of
maximal functions on spaces of homogeneous type. In addition, with the aid of interpolation theory, we provide weighted version of the commutator theorems by establishing new characterizations of the weighted BMO space. Finally, a concrete example shows the local version of commutators also has an independent interest.
\end{abstract}

\maketitle




\section{Introduction}
\label{sec:introduction}
\setcounter{equation}{0}

On the Euclidean space, for the Hilbert transform $H$, and other classical singular integral operators, a well-known result due to Coifman, Rochberg and Weiss (cf. \cite{CRW}) states that a locally integrable function $b$ in $\mathbb R^n$ is in ${\rm BMO}$ if and only if the commutator $[H, b]f:=H(bf)-bH(f)$, is bounded in $L^p$, for some (and for all) $1<p<\infty$. In \cite[Propositions 4 and 6]{BMR},  Bastero, Milman and Ruiz characterized the class of functions for which the commutator with the Hardy-Littlewood maximal function and  the maximal sharp function are bounded on $L^p$. Later, Garc\'ia-Cuerva et al. \cite[Theorem 2.4]{GHST} proved that the maximal commutator $\mathcal{C}_b$ is bounded on $L^p(\mathbb R^n)$, $1<p<\infty$,
 if and only if $b\in {\rm BMO}(\mathbb R^n)$,
where $\mathcal{C}_b$ is defined by
$$\mathcal{C}_b(f)(x)=\sup_{Q\ni x}{1\over |Q|}\int_Q|b(x)-b(y)| |f(y)|dy.$$

For the endpoint case, Agcayazi, Gogatishvili, Koca and Mustafayev \cite[Theorems 1.5, 1.6 and 1.8]{AGKM} proved that the commutator $[\mathcal{M}, b]$ of the Hardy--Littlewood maximal function $\mathcal{M}$ is bounded from $L\log^+L(\mathbb R^n)$ into weak $L^1(\mathbb R^n)$ if and only if $b$ is in ${\rm BMO}(\mathbb R^n)$. Similar result holds for the maximal commutator $\mathcal{C}_b$. Regrading weak type $(1,1)$, we note that in \cite{AGKM}, the authors also gave a counterexample to show that $[\mathcal{M}, b]$ fails to be of weak type $(1,1)$.

In this paper, we want to extend the above results to the spaces of homogeneous type. In modern harmonic analysis, it has been a central theme to extend the real variable theory from the Euclidean setting, namely where the underlying space is $\R^n$ with the Euclidean metric and Lebesgue measure, to more general
settings. To this end, Coifman and Weiss formulated the concept of spaces of homogeneous type, in~\cite{CW1}. There is a large literature devoted to spaces of homogeneous type, see for example~\cite{Aimar2007, AuHyt, CLW, CW1,CW2,DJS,DH,H1,H2,HLL,HS,MS}. Some non-Euclidean examples of spaces of homogeneous type are given by the Carnot--Carath\'eodory spaces whose theory is developed by Nagel, Stein and others
in~\cite{NS1}, \cite{NS2} and related papers; there the quasi-metric is defined in terms of vector fields satisfying the H\"ormander condition on an underlying manifold. Very recently, He et. al \cite{HHLLYY} presented a complete real-variable theory of Hardy spaces on
spaces of homogeneous type without additional (geometrical) measure condition in which only {\it doubling condition} is required.

Recall that $(X,\rho,\mu)$ is a {\it space of homogeneous type} in the sense of Coifman and Weiss if $\rho$~is a
quasi-metric and $\mu$ is a nonzero measure satisfying the doubling condition. A
\emph{quasi-metric}~$\rho: X\times X\rightarrow[0,\infty)$
satisfies (i) $\rho(x,y) = \rho(y,x) \geq 0$ for all $x$,
$y\in X$; (ii) $\rho(x,y) = 0$ if and only if $x = y$; and
(iii) the \emph{quasi-triangle inequality}: there is a constant
$A_0\in [1,\infty)$ such that for all $x$, $y$, $z\in X$,
\begin{eqnarray}\label{eqn:quasitriangleineq}
    \rho(x,y)
    \leq A_0 [\rho(x,z) + \rho(z,y)].
\end{eqnarray}
In contrast to a metric, the quasi-metric may not be H\"older
regular and quasi-metric balls may not be open; see for
example~\cite[p.5]{HK}. A nonzero measure $\mu$ satisfies the
\emph{doubling condition} if there is a constant $C_\mu$ such
that for all $x\in X$ and all $r > 0$,
\begin{eqnarray}\label{doubling condition}
   \mu(B(x,2r))
   \leq C_\mu \mu(B(x,r))
   < \infty,
\end{eqnarray}
where $B(x,r)=\{y\in X: \rho(x,y)<r\}$ with $x\in X$ and $r>0$ are the $\rho$-balls.
We point out that the doubling condition (\ref{doubling
condition}) implies that there exists a positive constant
$n$ (the \emph{upper dimension} of~$\mu$)  such
that for all $x\in X$, $\lambda\geq 1$ and $r > 0$,
\begin{eqnarray}\label{upper dimension}
    \mu(B(x, \lambda r))
    \leq  C_\mu\lambda^{n} \mu(B(x,r)).
\end{eqnarray}
Throughout this paper, we assume that $\mu$ satisfies the following conditions:
$\mu(\{x_0\})=0$ for every $x_0\in X$.

We now consider the case $\mu(X)=\infty$ (See also the case $\mu(X)<\infty$ in Section 4). Note that ${\rm BMO }(X)$ is defined as the set of all $b\in L^1_{loc}(X)$ such that
$$ \sup_{ B\subset X} {1\over\mu(B)}\int_{B} |b(y)-m_B(b)|d\mu(y)<\infty, $$
where $m_B(f)$ is the average of $f$ on $B$, i.e.
$m_B(f)={1\over \mu(B)}\int_B f(y)d\mu(y)$, and the norm is defined as
$$\|b\|_{{\rm BMO }(X)}:= \sup_{ B\subset X} {1\over\mu(B)}\int_{B} |b(y)-m_B(b)|d\mu(y). $$
 Note that it also has an {equivalent norm}, defined by
 $$\|b\|'_{\operatorname{BMO}(X)}=\sup_{B\subset X}\inf_{c\in\mathbb R}\frac{1}{\mu(B)}\int_{B}|b(y)-c|d\mu(y).$$

For a locally integrable function $f$ on $X$ and for $1\leq p<\infty$, the {\it Hardy--Littlewood maximal function} $M_pf$ of $f$ is defined by
$$M_p(f)(x)=\sup_{B\ni x}\left({1\over \mu(B)}\int_B|f(y)|^pd\mu(y)\right)^{1\over p},$$
where the supremum is taken over all balls containing $x$. When $p=1$, we write $M=M_1$, which is the classical Hardy--Littlewood maximal operator. For any $f\in L^1_{loc}(X)$ and $x\in X$, let $M^\sharp f$ be the {\it sharp maximal function of Fefferman and Stein} defined by
$$M^\sharp f(x)=\sup_{B\ni x}{1\over \mu(B)}\int_B |f(y)-m_B(f)|d\mu(y).$$
For a fixed $\delta\in (0,1)$, any suitable function $f$ and $x\in X$, let
$$M^\sharp_{\delta}f(x):=\left[M^\sharp\left(|f|^\delta\right)(x)\right]^{1\over \delta},
\quad
M_{\delta}f(x):=\left[M\left(|f|^\delta\right)(x)\right]^{1\over \delta}.$$

\noindent Given a measurable function $b$, the commutator of the Hardy--Littlewood maximal operator $M_{p}$ and $b$ is defined by
$$[M_{p}, b]f(x):=M_{p}(bf)(x)-b(x)M_{p}f(x) $$
for all $x\in X$. $[M^{\sharp}, b]$ can be defined in the same way.
 The maximal commutator is defined by
$$C_b(f)(x):=\sup_{B\ni x}{1\over\mu(B)}\int_{B}|b(x)-b(y)| |f(y)|d\mu(y)$$
for all $x\in X$. For any ball $B\subset X$, define
$$M_{p,B}(f)(x)=\sup_{x\in {B_0}\subseteq B}\Big({1\over \mu(B_0)}\int_{B_0}|f(y)|^pd\mu(y)\Big)^{1\over p}.$$

This paper is organised as follows. In Section 2 we recall some necessary
definitions and results on space of homogeneous type. We mainly present the pointwise estimate for $[M_{p}, b]$, $C_b$ and $[M^\sharp, b]$. We also provide a counter example on space of homogeneous type showing that the commutator  $[M,b]$ fails to be of weak type $(1,1)$.
Section 3 focuses on the weighted version of commutator theorems.
In the last section we skim through the local characterizations and give a concrete example on bounded pseudoconvex domain in $\mathbb C^n$ which shows
the local version of commutators also has its independent connotation.

%
%






\medskip
\section{Pointwise estimates}
\label{sec:pointwise}
\setcounter{equation}{0}

The poinwise estimates for the characterizations of commutators of maximal functions on spaces of homogeneous type depend on the case of the Euclidean space appeared in \cite{AGKM}. However, this is the basis for later weighted and local estimations. Therefore, we will still give a complete proof with the aid of proof based on [1] to keep the integrity of the paper.

\begin{theorem}\label{thm-mp}
Let $b$ be a real valued, locally integrable function in $X$. The following assertions are equivalent:

{\rm (i)} $b\in {\rm BMO}(X)$ and $b^-\in L^\infty(X)$;

{\rm (ii)} The commutator $[M_p, b]$ is bounded on $L^q(X)$, for all $q$, $p<q<\infty$;

{\rm (iii)} The commutator $[M_p, b]$ is bounded on $L^q(X)$, for some $q$, $p<q<\infty$;

{\rm (iv)} For all $q\in [1,\infty)$ we have
$$\sup_{B}{1\over\mu(B)}\int_B|b(x)-M_{p,B}(b)(x)|^qd\mu(x)<\infty.$$

{\rm (v)} There exists $q\in [1,\infty)$ such that
$$\sup_{B}{1\over\mu(B)}\int_B|b(x)-M_{p,B}(b)(x)|^qd\mu(x)<\infty.$$
\end{theorem}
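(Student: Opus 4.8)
The plan is to prove the cycle of implications $(i)\Rightarrow(ii)\Rightarrow(iii)\Rightarrow(v)\Rightarrow(iv)\Rightarrow(i)$ — or some convenient rearrangement — following the strategy of \cite{AGKM}, \cite{BMR} adapted to the homogeneous setting. The conceptual heart is a pointwise comparison between the commutator $[M_p,b]$ and the maximal commutator $C_b$ (together with a maximal-function localisation argument), so I would first record the elementary inequality
\begin{equation*}
\bigl|[M_p,b]f(x)\bigr|\le C_b(|f|^p)(x)^{1/p}+\text{(terms controlled by }b^-\text{ and }M_pf\text{)}.
\end{equation*}
More precisely, using $|b|=b^+ - b^- $ and the sublinearity of $M_p$, one gets $[M_p,b]f=[M_p,b^+]f-[M_p,b^-]f$ with the $b^-$ part bounded by $2\|b^-\|_\infty M_pf$, and for $b^+\ge 0$ one uses the standard identity $M_p(b^+ f)(x)\ge b^+(x)M_pf(x)$ together with $M_p(b^+ f)(x)-b^+(x)M_pf(x)\le \sup_{B\ni x}\bigl(\frac1{\mu(B)}\int_B|b^+(x)-b^+(y)|^p|f(y)|^p d\mu\bigr)^{1/p}\le C_{b^+}(|f|^p)(x)^{1/p}$ — this last step needs the $p$-th power version of $C_b$, and one should note $\|b^+\|_{\mathrm{BMO}}\lesssim \|b\|_{\mathrm{BMO}}$ when $b^-\in L^\infty$. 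Hence $(i)$ plus the known boundedness of $C_b$ on $L^q$ for $b\in\mathrm{BMO}(X)$ (the homogeneous analogue of \cite{GHST}, which I would cite or prove via a John--Nirenberg argument on $X$) gives $(ii)$, and $(ii)\Rightarrow(iii)$ is trivial.

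For $(iii)\Rightarrow(v)$ I would test the $L^q$-boundedness of $[M_p,b]$ on $f=\chi_B$ for an arbitrary ball $B$. The key observations are: for $x\in B$ one has $M_p(\chi_B)(x)=1$ (since $B$ itself is an admissible ball), and $M_p(b\chi_B)(x)\ge M_{p,B}(b)(x)$ by definition of the localised maximal operator; moreover on $B$ one has $M_p(b\chi_B)(x)=M_{p,B}(|b|)(x)\ge M_{p,B}(b)(x)$. Therefore $|[M_p,b]\chi_B(x)|=|M_p(b\chi_B)(x)-b(x)|\ge |M_{p,B}(b)(x)-b(x)|$ on $B$ (after handling the sign of $b$ — here is where one again splits off $b^-$, or argues that since $M_{p,B}(b)\ge m_{B}(|b|)\ge |m_B(b)|$ the relevant quantity dominates $|b(x)-M_{p,B}(b)(x)|$ up to the $b^-$ contribution). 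Integrating the $q$-th power over $B$ and using $\|[M_p,b]\chi_B\|_{L^q}\le C\|\chi_B\|_{L^q}=C\mu(B)^{1/q}$ yields $(v)$; and $(iv)\Rightarrow(v)$ is immediate.

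The implication $(v)\Rightarrow(i)$, or equivalently $(v)\Rightarrow(iv)$, is where the real work lies and which I expect to be the main obstacle. The strategy is: from the finiteness of $\sup_B \frac1{\mu(B)}\int_B|b(x)-M_{p,B}(b)(x)|^q d\mu$ one first extracts that $b^-\in L^\infty$ — this comes from the pointwise bound $M_{p,B}(b)(x)\ge 0$ wherever $b$ has controllable positive part, so the negative part of $b$ cannot be large on a set of positive measure without violating the supremum (one runs a stopping-time / contradiction argument on balls where $b^-$ is large). Having $b^-\in L^\infty$, one reduces to $b\ge 0$ (modulo a bounded correction), and for $b\ge 0$ one exploits $M_{p,B}(b)(x)\ge m_B(b)$ for all $x\in B$ together with $0\le M_{p,B}(b)(x)-b(x)$ being comparable (after the reduction) to $|b(x)-M_{p,B}(b)(x)|$, to deduce
\begin{equation*}
\frac1{\mu(B)}\int_B|b(x)-m_B(b)|\,d\mu(x)\le \frac1{\mu(B)}\int_B|b(x)-M_{p,B}(b)(x)|\,d\mu(x)+\bigl(M_{p,B}(b)\text{ oscillation}\bigr),
\end{equation*}
controlling the extra oscillation term by the weak-type $(1,1)$ or $L^q$ bound for the (localised) maximal operator via a Kolmogorov-type inequality. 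All of this requires care because balls in $X$ need not be open, the quasi-triangle inequality introduces the constant $A_0$ into every nesting argument, and $M_{p,B}$ is taken over subballs $B_0$ with $x\in B_0\subseteq B$ rather than centred balls — so the doubling condition \eqref{upper dimension} and a basic covering/Vitali lemma on $X$ must be invoked to compare $M_{p,B}(b)$ with genuine averages. Finally, passing from the $q=1$ oscillation estimate to all $q$ (hence to $(iv)$) uses the John--Nirenberg inequality on spaces of homogeneous type, which holds under the doubling assumption. I would isolate the localised-maximal-operator estimates as a preliminary lemma so that the four implications read cleanly.
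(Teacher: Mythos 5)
Your high-level plan --- the cycle $(i)\Rightarrow(ii)\Rightarrow(iii)\Rightarrow(v)\Rightarrow(i)$ with a test-function argument on $f=\chi_B$ --- is the right shape, and your $(iii)\Rightarrow(v)$ step matches the paper. But both the route for $(i)\Rightarrow(ii)$ and the crucial $(v)\Rightarrow(i)$ and $(v)\Rightarrow(iv)$ steps have real problems. For $(i)\Rightarrow(ii)$, the paper does not use a pointwise estimate by a maximal commutator at all; it invokes the Milman--Schonbek interpolation theorem through Lemma~\ref{lem-mpb} and Proposition~\ref{pro-mpb}, which give $L^q$-boundedness of $[T,b]$ for \emph{any} quasilinear $T$ satisfying (a)--(c) that is itself $L^q$-bounded, an abstract argument that is then reused for $M^\sharp$ in Theorem~\ref{thm-msharp}. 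Your pointwise bound is moreover incorrect as stated: Minkowski's integral inequality gives $|[M_p,b^+]f(x)|\le\sup_{B\ni x}\bigl(\frac1{\mu(B)}\int_B|b^+(x)-b^+(y)|^p|f(y)|^p\,d\mu(y)\bigr)^{1/p}$, with $|b^+(x)-b^+(y)|$ raised to the $p$-th power, whereas $C_{b^+}(|f|^p)(x)^{1/p}$ has that factor to the first power only; when $|b^+(x)-b^+(y)|>1$ the former is larger, so the claimed domination by $C_{b^+}(|f|^p)^{1/p}$ fails. To salvage your route you would have to introduce the genuine $p$-power maximal commutator and prove its $L^q$-boundedness for $q>p$, which you do not do.

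For $(v)\Rightarrow(i)$ your sketch replaces two simple but decisive observations with unnecessary machinery. First, $M_{p,B}(b)(x)\ge|b(x)|$ a.e.\ on $B$ (let $B_0$ shrink to $x$ in the supremum and use Lebesgue differentiation), hence $0\le b^-\le M_{p,B}(b)-b$ pointwise on $B$; (v) then gives $m_B(b^-)\le C$ for all $B$ and $b^-\in L^\infty$ follows by Lebesgue differentiation again --- no stopping-time or contradiction argument is needed, and your weaker observation ``$M_{p,B}(b)\ge0$'' is not enough to conclude. Second, with $E_1=\{x\in B:b(x)\le m_B(b)\}$ one has $\int_{E_1}|b-m_B(b)|\,d\mu=\frac12\int_B|b-m_B(b)|\,d\mu$, and on $E_1$ one has $b\le m_B(b)\le M_{p,B}(b)$, so $|b-m_B(b)|\le|b-M_{p,B}(b)|$ there; this gives the BMO estimate outright, with no ``oscillation of $M_{p,B}(b)$'' remainder, no Kolmogorov inequality, and no covering lemma. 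Finally, your proposed $(v)\Rightarrow(iv)$ via John--Nirenberg does not go through as stated: John--Nirenberg controls the distribution of $b-m_B(b)$, a deviation from a constant, whereas (iv) concerns $b(x)-M_{p,B}(b)(x)$, a deviation from an $x$-dependent function, and one cannot simply transfer exponential integrability from one to the other. The paper sidesteps this by obtaining (iv) from (ii): the test-function argument gives the estimate for every $q>p$, and H\"older's inequality then covers $1\le q\le p$.
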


\begin{theorem}\label{thm-msharp}
Let $b$ be a real valued, locally integrable function in $X$. The following assertions are equivalent:

{\rm (i)} $b\in {\rm BMO}(X)$ and $b^-\in L^\infty(X)$;

{\rm (ii)} The commutator $[M^\sharp, b]$ is bounded on $L^q(X,\mu)$, for all $q$, $1<q<\infty$;

{\rm (iii)} The commutator $[M^\sharp, b]$ is bounded on $L^q(X,\mu)$, for some $q$, $1<q<\infty$;

{\rm (iv)} For all $q\in [1,\infty)$, we have
$$\sup_{B}{1\over\mu(B)}\int_B|b(x)-2M^\sharp(b\chi_B)(x)|^qd\mu(x)<\infty;$$

{\rm (v)} There exists $q\in [1,\infty)$ such that
$$\sup_{B}{1\over\mu(B)}\int_B|b(x)-2M^\sharp(b\chi_B)(x)|^qd\mu(x)<\infty.$$
\end{theorem}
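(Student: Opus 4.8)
The plan is to mirror the cyclic scheme used for Theorem \ref{thm-mp}, establishing $(i)\Rightarrow(ii)\Rightarrow(iii)\Rightarrow(v)\Rightarrow(iv)$ is trivial in one direction, and $(iv)\Rightarrow(i)$ closing the loop, while exploiting the elementary pointwise identity that controls $[M^\sharp,b]$. The first and most useful observation is a pointwise bound in the spirit of \cite{AGKM}: for a ball $B\ni x$ one has
\[
\Bigl|\,b(x)\,m_B(f)-m_B(bf)\,+\,\bigl(\text{sharp-type remainder}\bigr)\Bigr|\le C_b(f)(x)+\bigl(M^\sharp b(x)\bigr)\,M f(x),
\]
so that $|[M^\sharp,b]f(x)|\lesssim C_b(f)(x)+M^\sharp b(x)\,Mf(x)$; here one uses that $M^\sharp(bf)$ and $M^\sharp f$ differ by terms one can estimate by $C_b(f)$ plus oscillation of $b$ times an average of $|f|$. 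Since Theorem \ref{thm-mp} (with $p=1$) already gives that, under $(i)$, both $C_b$ and the relevant maximal operators are bounded on every $L^q(X,\mu)$, $1<q<\infty$, the implication $(i)\Rightarrow(ii)$ follows; and $(ii)\Rightarrow(iii)$ is immediate, as is $(iv)\Rightarrow(v)$.

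For $(iii)\Rightarrow(v)$ I would test the commutator on $f=\chi_B$. One computes that $M^\sharp(b\chi_B)(x)$ is comparable to the local sharp maximal function of $b$ on $B$ up to the factor $2$ appearing in the statement (this is where the constant $2$ enters — it is dictated by the bound $m_{B_0}|b\chi_B-m_{B_0}(b\chi_B)|\le 2\,\inf_c m_{B_0}|b-c|$ type estimate, valid for $B_0\subseteq B$), and that $M^\sharp(\chi_B)(x)$ is bounded above and below by absolute constants for $x\in B$. Hence for $x\in B$,
\[
|[M^\sharp,b]\chi_B(x)|=\bigl|M^\sharp(b\chi_B)(x)-b(x)M^\sharp\chi_B(x)\bigr|\gtrsim \bigl|\,b(x)-2M^\sharp(b\chi_B)(x)\,\bigr|-C,
\]
so boundedness of $[M^\sharp,b]$ on $L^q$ forces $\frac{1}{\mu(B)}\int_B|b-2M^\sharp(b\chi_B)|^q\,d\mu\lesssim \frac{1}{\mu(B)}\|[M^\sharp,b]\chi_B\|_{L^q}^q+C\lesssim \frac{\mu(B)}{\mu(B)}$, uniformly in $B$, which is $(v)$.

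Finally, for $(v)\Rightarrow(i)$ I would argue exactly as in the proof of Theorem \ref{thm-mp}: from the uniform bound one extracts, via $2M^\sharp(b\chi_B)(x)\ge |b(x)|$ on a set where $b$ is large (using $M^\sharp(b\chi_B)(x)\ge \frac12|b(x)-m_B(b)|$-type lower bounds together with control of $m_B(b)$), that $b^-\in L^\infty(X)$; then splitting $b=b^+-b^-$ and using that adding a bounded function does not affect membership in $\operatorname{BMO}(X)$, one reduces to $b\ge 0$, for which $2M^\sharp(b\chi_B)(x)\ge$ a multiple of the local oscillation, and Jensen/Hölder upgrades the $L^q$ control of $|b-2M^\sharp(b\chi_B)|$ to the $L^1$-oscillation bound defining $\|b\|_{\operatorname{BMO}(X)}$; one then passes from $(iv)$ back to $(ii)$ via the pointwise estimate of the first paragraph, and the cycle $(i)\Leftrightarrow(ii)\Leftrightarrow(iii)\Leftrightarrow(iv)\Leftrightarrow(v)$ is complete. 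The main obstacle I anticipate is the bookkeeping in the comparison between $M^\sharp(bf)$, $b\,M^\sharp f$ and $C_b(f)$ on a space of homogeneous type, where balls need not be open and the quasi-triangle inequality introduces the constant $A_0$ into every nesting argument; getting the constant $2$ in $(iv)$–$(v)$ exactly right, rather than an unspecified dimensional constant, requires choosing the comparison ball carefully and using the equivalent norm $\|\cdot\|'_{\operatorname{BMO}(X)}$ with its infimum over constants $c$.
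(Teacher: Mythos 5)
Your outline captures the overall cyclic scheme, but it diverges from the paper's proof in ways that leave real gaps, particularly for $(i)\Rightarrow(ii)$ and for getting the factor $2$ exactly.

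For $(i)\Rightarrow(ii)$ you propose an unproven pointwise bound of the shape $|[M^\sharp,b]f(x)|\lesssim C_b(f)(x)+M^\sharp b(x)\,Mf(x)$, with the derivation left as ``one uses that $M^\sharp(bf)$ and $M^\sharp f$ differ by terms one can estimate\dots''. That is precisely the hard part: because $M^\sharp$ involves subtracting off averages $m_B(bf)$ rather than just averaging $|bf|$, there is no clean pointwise comparison of $[M^\sharp,b]f$ to $C_b(f)$ analogous to Lemma \ref{lem-cbm} for $[M,b]$. The paper sidesteps this entirely: it notes $M^\sharp f\le 2Mf$, so $M^\sharp$ is a quasilinear operator bounded on every $L^q$, and invokes Proposition \ref{pro-mpb} (the interpolation lemma of Milman--Schonbek type) to conclude $[M^\sharp,b]$ is bounded on $L^q$ whenever $b\in\operatorname{BMO}(X)$ with $b^-\in L^\infty$. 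Your route would need a separate proof of the claimed pointwise inequality, which you have not supplied and which I doubt holds in the form stated.

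For $(iii)\Rightarrow(v)$ you only claim that $M^\sharp(\chi_B)(x)$ is ``bounded above and below by absolute constants'' on $B$, and from this you derive $|[M^\sharp,b]\chi_B(x)|\gtrsim |b(x)-2M^\sharp(b\chi_B)(x)|-C$. That inequality does not yield the statement with the exact constant $2$. The paper instead establishes the exact identity $M^\sharp(\chi_B)(x)=\tfrac12$ for all $x\in B$ (proving $\le\tfrac12$ by the elementary two-set computation, and $=\tfrac12$ by choosing a ball $B_1\supset B$ with $\mu(B_1)=2\mu(B)$), from which $[M^\sharp,b]\chi_B(x)=M^\sharp(b\chi_B)(x)-\tfrac12 b(x)$ on $B$ exactly, not merely up to an $O(1)$ error. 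You flag this difficulty yourself at the end, but do not resolve it, and it is essential: without the exact value $\tfrac12$ there is no reason $(iii)$ should imply $(v)$ as written.

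For $(v)\Rightarrow(i)$ your sketch is in the right spirit but the pivotal inequality you invoke, something like $M^\sharp(b\chi_B)(x)\ge\tfrac12|b(x)-m_B(b)|$, is not what the paper proves and is in fact not a pointwise truth. The paper's key lemma here is $|m_B(b)|\le 2M^\sharp(b\chi_B)(x)$ for all $x\in B$, proved again by expanding the average over a dilated ball $B_1$ with $\mu(B_1)=2\mu(B)$; this plus the usual $E=\{x\in B:b(x)\le m_B(b)\}$ halving trick gives the BMO bound, and the inequality $2M^\sharp(b\chi_B)(x)-b(x)\ge |m_B(b)|-b^+(x)+b^-(x)$ plus Lebesgue differentiation gives $b^-\in L^\infty$. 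You would need to replace your vague lower bound with this one to make the step go through.

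In short: the weakest point is $(i)\Rightarrow(ii)$, where you should use the quasilinear interpolation route via Proposition \ref{pro-mpb} together with $M^\sharp f\le 2Mf$; and throughout, the constant $2$ forces you to prove the exact identity $M^\sharp(\chi_B)\equiv\tfrac12$ on $B$ and the bound $|m_B(b)|\le 2M^\sharp(b\chi_B)(x)$, not just comparability up to dimensional constants.
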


\begin{theorem}\label{thm-mb}
Let $b\in{\rm BMO}(X)$ such that $b^-\in L^\infty(X)$. Then there exists a positive constant $C$ such that
for all $f\in L(1+\log^+L)(X)$ and $\lambda>0$,
\begin{align*}
&\mu\left(\{x\in X: |[M,b]f(x)|>\lambda \}\right)\\
&\leq C\left(\|b^+\|_{{\rm BMO}(X)}+\|b^-\|_{L^\infty} \right) \big(1+\log^+\left(\|b^+\|_{{\rm BMO}(X)}\right)\big)
\int_X{|f(x)|\over \lambda}\Big(1+\log^+\Big( {|f(x)|\over \lambda}\Big) \Big)d\mu(x).
\end{align*}
\end{theorem}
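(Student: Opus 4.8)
\section*{Proof proposal for Theorem \ref{thm-mb}}

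The plan is to adapt the Euclidean argument of Agcayazi--Gogatishvili--Koca--Mustafayev to the homogeneous type setting, using a Calder\'on--Zygmund decomposition adapted to $(X,\rho,\mu)$. First I would reduce to the case $b\geq 0$: since $[M,b]f = [M,b^+]f - [M, b^-]f$ pointwise only fails because $M$ is sublinear, so instead I would write $b(x)M f(x) = b^+(x) Mf(x) - b^-(x)Mf(x)$ and use $b^-\in L^\infty$ to absorb the contribution of $b^-$ via the trivial bound $\|b^- Mf - M(b^- f)\|$-type estimates, reducing the weak-$(1,1)$-type (really $L(1+\log^+L)\to L^{1,\infty}$) bound to the case of a nonnegative BMO function $b=b^+$. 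The term coming from $b^-$ is controlled by $2\|b^-\|_{L^\infty} Mf$, and the weak $(1,1)$ bound for $M$ on spaces of homogeneous type (available from the Vitali covering lemma and doubling) handles it, producing only the linear-in-$\|f/\lambda\|_{L^1}$ part of the right-hand side.

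For the main term with $b\geq 0$, by homogeneity I may set $\lambda=1$ and assume $\|b\|_{{\rm BMO}(X)}>0$ (rescaling $b$ if needed; note the factor $(1+\log^+\|b^+\|_{\rm BMO})$ is exactly the price of this normalization). The key pointwise inequality is
$$|[M,b]f(x)| \le C_b(f)(x) + 2\, b(x)\, Mf(x) - 2\, M(bf)(x) \le C_b(f)(x),$$
wait --- more carefully, the standard estimate is $|[M,b]f(x)|\le C_b(f)(x)$ when $b\ge 0$, since $M(bf)(x)\ge b(x)Mf(x)$ fails in general, but $|b(x)Mf(x)-M(bf)(x)|\le \sup_{B\ni x}\frac{1}{\mu(B)}\int_B|b(x)-b(y)||f(y)|d\mu(y)=C_b(f)(x)$ does hold. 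So it suffices to prove the stated weak-type estimate for the maximal commutator $C_b$. Then I would run a Calder\'on--Zygmund decomposition of $|f|$ at height $1$: write $f = g + h$ with $g$ the good part ($|g|\lesssim 1$ a.e. and $\|g\|_{L^1}\lesssim \|f\|_{L^1}$) and $h=\sum_j h_j$ supported on pairwise disjoint balls $B_j$ (or dyadic-type cubes \`a la Christ/Hyt\"onen--Kairema) with $\int_{B_j} h_j\, d\mu=0$ and $\sum_j \mu(B_j)\lesssim \int |f|(1+\log^+|f|)d\mu$ --- this last bound, with the logarithm, is where the $L\log L$ hypothesis enters, exactly as in the classical proof. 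For the good part I would use that $C_b$ maps $L^2\to L^2$ with norm $\lesssim\|b\|_{\rm BMO}$ (which follows by interpolation or directly from the sharp maximal function estimates implicit in Theorem \ref{thm-mp}/\ref{thm-msharp}) and Chebyshev. For the bad part, the key point is that on $X\setminus \bigcup_j \widetilde B_j$ (dilated balls), the cancellation $\int h_j=0$ combined with the BMO oscillation of $b$ gives pointwise control of $C_b(h_j)$ by a term summable against $\mu(B_j)$; the set $\bigcup \widetilde B_j$ itself has measure $\lesssim \sum\mu(B_j)$ by doubling.

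The main obstacle will be the bad-part estimate for $C_b(h_j)$ away from the exceptional set. Unlike a linear singular integral, $C_b$ is only sublinear and the ``kernel'' is the averaging operator, so I cannot simply subtract a constant and use a kernel smoothness estimate. Instead I expect to exploit that for $x\notin \widetilde B_j$ and any ball $B\ni x$ meeting $B_j$, one has $B_j\subseteq \kappa B$ for a structural constant $\kappa$ (by the quasi-triangle inequality \eqref{eqn:quasitriangleineq}), so $\frac{1}{\mu(B)}\int_{B_j}|b(x)-b(y)||h_j(y)|d\mu(y)$ can be handled by writing $|b(x)-b(y)|\le |b(x)-m_{B_j}b| + |m_{B_j}b - b(y)|$; the first term pairs with $\int_{B_j}|h_j|\lesssim \int_{B_j}|f|$ and, after summing and integrating over $x$ in annuli around $B_j$, produces a BMO-type tail sum that converges because of doubling and the upper dimension \eqref{upper dimension}; the second term uses $\int_{B_j}|m_{B_j}b-b(y)||h_j(y)|d\mu(y)\lesssim \|b\|_{\rm BMO}\,\mu(B_j)$ after a further $L\log L$-type splitting of $h_j$ on $B_j$. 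Carefully tracking constants so that the final bound depends on $\|b\|_{\rm BMO}$ linearly and on $\log^+\|b\|_{\rm BMO}$ only through the initial normalization --- and making the dyadic decomposition on $X$ rigorous using the Hyt\"onen--Kairema cubes rather than genuine balls --- is the delicate bookkeeping I anticipate.
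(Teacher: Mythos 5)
Your reduction to $b\geq 0$ via $|[M,b]f|\leq C_b(f)+2b^-Mf$ is exactly the paper's Lemma \ref{lem-cbm}, and the handling of the $b^-$ piece by the weak $(1,1)$ bound for $M$ matches the paper. The divergence is in how the main term $\mu(\{C_b f>\lambda/2\})$ is handled. The paper does \emph{not} run a Calder\'on--Zygmund decomposition: it already proved (inequality \eqref{cbbmo}, inside the proof of Theorem \ref{thm-cb-iff}) the $L(1+\log^+ L)\to L^{1,\infty}$ bound for $C_b$, as an immediate consequence of the pointwise domination $C_b f(x)\leq C\|b\|_{{\rm BMO}(X)}M^2f(x)$ (Corollary \ref{cor-cm}), the equivalence $M^2\approx M_{L\log L}$ from \eqref{mml}, and the Chen--Sawyer endpoint bound for $M_{L\log L}$ (Lemma \ref{lem-ml}). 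Theorem \ref{thm-mb} is then a two-line corollary of these pieces, all of which are assembled earlier in the section.

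Your CZ decomposition for $C_b$ has a genuine gap in the bad-part estimate, which you partly sense but do not resolve. Because $C_b$ involves $|b(x)-b(y)|$ inside the integral, the cancellation $\int_{B_j}h_j\,d\mu=0$ is unusable, and the averaging ``kernel'' contributes no decay beyond $\mu(B)^{-1}$. Concretely, for $x$ in the dyadic annulus $2^{k+1}\widetilde B_j\setminus 2^k\widetilde B_j$ the contribution you identify from the $|b(x)-m_{B_j}(b)|$ piece is of size about $|b(x)-m_{B_j}(b)|\,\mu(B_j)/\mu(2^kB_j)$, and integrating over that annulus using the standard BMO growth estimate
$$\frac{1}{\mu(2^{k+1}B_j)}\int_{2^{k+1}B_j}|b-m_{B_j}(b)|\,d\mu\lesssim (k+1)\|b\|_{{\rm BMO}(X)}$$
gives a contribution of order $(k+1)\|b\|_{{\rm BMO}(X)}\mu(B_j)$. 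The sum over $k$ \emph{diverges}; doubling and the upper dimension \eqref{upper dimension} control the ratio of ball measures but do not help the $(k+1)$ factor. This is the same structural reason the weak $(1,1)$ bound for $M$ itself is not proved by a good/bad split, and it is why both this paper and \cite{AGKM} route instead through the pointwise bound by $M^2\approx M_{L\log L}$ and its known weak-type $L\log L$ estimate, rather than through a Calder\'on--Zygmund decomposition of $f$.
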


\begin{theorem}\label{thm-lp}
Let $b\in L_{loc}^1(X)$ and $1<p<\infty$. Then the maximal commutator $C_b$ is bounded on $L^p(X)$ if and only if $b\in {\rm BMO(X)}$.
\end{theorem}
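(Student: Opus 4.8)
The plan is to prove the two implications separately; necessity is elementary, and the substance lies in the sufficiency direction, which I would model on the Euclidean arguments of \cite{GHST,AGKM}. For \emph{necessity}, suppose $C_b$ is bounded on $L^p(X)$, fix a ball $B$, and test on $f=\chi_B$: for every $x\in B$,
\[
C_b(\chi_B)(x)\ \ge\ \frac1{\mu(B)}\int_B|b(x)-b(y)|\,d\mu(y)\ \ge\ \big|b(x)-m_B(b)\big|,
\]
so $\int_B|b-m_B(b)|^p\,d\mu\le\int_X C_b(\chi_B)^p\,d\mu\le\|C_b\|_{L^p\to L^p}^p\,\mu(B)$. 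Hence the $L^p$-mean oscillation of $b$ is bounded uniformly over balls, which, by the John--Nirenberg inequality on spaces of homogeneous type, is equivalent to $b\in\mathrm{BMO}(X)$.

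For \emph{sufficiency}, assume $b\in\mathrm{BMO}(X)$. First I would reduce the $L^p$-bound, via the Fefferman--Stein sharp maximal function inequality $\|g\|_{L^p(X)}\lesssim\|M^\sharp_\delta g\|_{L^p(X)}$ (valid on spaces of homogeneous type, once $C_b$ is truncated to suprema over balls of radius $\le N$ so as to secure the needed a priori $L^p$-finiteness, and then letting $N\to\infty$), to the pointwise estimate
\[
M^\sharp_\delta(C_bf)(x)\ \le\ C\,\|b\|_{\mathrm{BMO}(X)}\,M_{L\log L}f(x),\qquad x\in X,
\]
for some fixed $\delta\in(0,1)$, where $M_{L\log L}$ is the Orlicz maximal operator associated with $\Phi(t)=t(1+\log^+t)$. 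Since $M_{L\log L}\approx M\circ M$ is bounded on $L^p(X)$ for $1<p<\infty$, this estimate yields $\|C_bf\|_{L^p}\lesssim\|b\|_{\mathrm{BMO}(X)}\|f\|_{L^p}$.

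To prove the pointwise estimate, fix a ball $B\ni x$, set $\widetilde B=\kappa B$ for a dilation constant $\kappa=\kappa(A_0,C_\mu)$ large enough that the quasi-triangle inequality separates $B$ from $X\setminus\widetilde B$, and split $f=f_1+f_2$ with $f_1=f\chi_{\widetilde B}$. For the local part one uses $|b(z)-b(y)|\le|b(z)-m_{\widetilde B}(b)|+|b(y)-m_{\widetilde B}(b)|$ to bound $C_bf_1(z)$ by $|b(z)-m_{\widetilde B}(b)|\,Mf_1(z)+M(|b-m_{\widetilde B}(b)|f_1)(z)$, then raises to the power $\delta$ and averages over $B$, invoking: the John--Nirenberg inequality on $X$ (controlling all local $L^q$-averages of $b-m_{\widetilde B}(b)$ over $\widetilde B$, and in particular giving $\|b-m_{\widetilde B}(b)\|_{\exp L,\widetilde B}\lesssim\|b\|_{\mathrm{BMO}(X)}$), Kolmogorov's inequality with the weak $(1,1)$ bound for $M$, the generalized H\"older inequality for the dual Orlicz pair $(\exp L,\,L\log L)$, and the local estimate $\frac1{\mu(B)}\int_B Mf_1\,d\mu\lesssim\|f\|_{L\log L,\widetilde B}\le M_{L\log L}f(x)$; this produces $\frac1{\mu(B)}\int_B(C_bf_1)^\delta\,d\mu\lesssim\|b\|_{\mathrm{BMO}(X)}^\delta M_{L\log L}f(x)^\delta$. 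For the tail one subtracts the constant $m_B\big((C_bf_2)^\delta\big)$, reducing matters to the double average $\frac1{\mu(B)^2}\iint_{B\times B}|C_bf_2(z)-C_bf_2(z')|^\delta\,d\mu(z)\,d\mu(z')$, and exploits that any ball nearly realizing $C_bf_2(z)$ must have radius $\gtrsim r_B$, hence (by the quasi-triangle inequality and doubling) is comparable to a ball containing $B$; combined with $\big||b(z)-b(y)|-|b(z')-b(y)|\big|\le|b(z)-b(z')|$ this controls the oscillation of $C_bf_2$ on $B$ by $|b(z)-b(z')|\,Mf(x)+\|b\|_{\mathrm{BMO}(X)}M_{L\log L}f(x)$, and a final application of John--Nirenberg to $\frac1{\mu(B)^2}\iint_{B\times B}|b(z)-b(z')|^\delta$ closes the estimate.

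\emph{The main obstacle} is this tail step: since quasi-balls need not be open or nested, the comparison of the ball families $\{B'\ni z\}$ and $\{B'\ni z'\}$ and of the corresponding averages $m_{B'}(b)$ must be carried out quantitatively in terms of $A_0$ and $C_\mu$, and the factors $|b(z)-b(z')|$, which are unbounded on $B$, can only be absorbed after integration in $z,z'$. Beyond this, one must verify that the tools imported from the Euclidean theory --- the Fefferman--Stein inequality, the $(\exp L,L\log L)$ H\"older inequality, the equivalence $M_{L\log L}\approx M\circ M$, and the local $L\log L\to L^1$ bound for $M$ --- all transfer to $(X,\rho,\mu)$; each does, using only the doubling condition and John--Nirenberg.
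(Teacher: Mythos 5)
Your two implications are both sound, but the sufficiency direction takes a genuinely different (and somewhat heavier) route than the paper.

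For \emph{necessity} the paper tests $C_b$ on $\chi_B$ exactly as you do, though it phrases the estimate via $\inf_{c\in\mathbb R}\int_B|b-c|\,d\mu$ and H\"older on the double integral $\int_B\int_B|b(x)-b(y)|\,d\mu\,d\mu$; your pointwise Jensen step $\frac{1}{\mu(B)}\int_B|b(x)-b(y)|\,d\mu(y)\geq|b(x)-m_B(b)|$ followed by the $L^p$--BMO equivalence (Lemma~\ref{lem-bmoq}) is an equivalent, slightly more direct variant.

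For \emph{sufficiency} the paper does \emph{not} go through $M^\sharp_\delta$ and the Fefferman--Stein inequality. Instead (Proposition~\ref{thm-mm} and Corollary~\ref{cor-cm}) it establishes the stronger \emph{full} maximal estimate
$M_\delta(C_bf)(x)\leq C\|b\|_{{\rm BMO}(X)}\,M^2f(x)$, whence $C_bf(x)\leq C\|b\|_{{\rm BMO}(X)}M^2f(x)$ a.e.\ by Lebesgue differentiation, and then boundedness on $L^p$ follows from the $L^p$-boundedness of $M$ alone. The key structural difference is the opening decomposition: for fixed $B\ni x$ the paper writes, for $y\in B$,
\begin{align*}
C_bf(y)\leq M\big((b-m_{3B}(b))f_1\big)(y)+M\big((b-m_{3B}(b))f_2\big)(y)+|m_{3B}(b)-b(y)|\,Mf(y),
\end{align*}
and instead of subtracting a constant to cancel the last term (as in an $M^\sharp$ argument), it simply averages it in $\delta$ over $B$: H\"older with exponents $1/(1-\delta)$ and $1/\delta$ plus the $L^q$--BMO norm turns $\big(\frac1{\mu(B)}\int_B|m_{3B}(b)-b|^\delta(Mf)^\delta\big)^{1/\delta}$ into $\|b\|_{{\rm BMO}(X)}\cdot\frac1{\mu(B)}\int_B Mf\leq\|b\|_{{\rm BMO}(X)}M^2f(x)$. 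Because the bound is on $M_\delta$ rather than $M^\sharp_\delta$, the paper never needs the Fefferman--Stein inequality, nor the truncation-to-radius-$N$ argument you invoke to secure a priori $L^p$-finiteness, nor the tail oscillation estimate $\frac1{\mu(B)^2}\iint_{B\times B}|C_bf_2(z)-C_bf_2(z')|^\delta$ you flag as the main obstacle. Your plan is correct and essentially mirrors the Euclidean commutator template, but the paper's ``$M_\delta$ instead of $M^\sharp_\delta$'' observation buys a shorter and technically lighter proof: the only ball-comparison argument it needs is the standard one showing $M\big((b-m_{3B}(b))f_2\big)$ is essentially constant on $B$ (since $f_2$ lives off $3B$), which also appears implicitly in your tail step.
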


\begin{theorem}\label{thm-cb-iff}
Let $b\in L^1_{loc}(X)$. Then $b\in {\rm BMO}(X)$ if and only if there exists a positive constant $C$ such that for each $\lambda>0$ and all $f\in L(1+\log^+L)(X)$,
\begin{align}\label{cbl}
\mu\left(\left\{ x\in X: C_b(f)(x)>\lambda \right\} \right)\leq C\int_X{|f(x)|\over\lambda}\Big(1+\log^+\Big({|f(x)|\over\lambda} \Big) \Big)d\mu(x).
\end{align}
\end{theorem}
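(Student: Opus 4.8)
The plan is to prove Theorem~\ref{thm-cb-iff} by establishing the two implications separately, modelling the argument on the Euclidean case of \cite{AGKM} but carrying it through on a general space of homogeneous type. For the ``only if'' direction, assume $b\in{\rm BMO}(X)$. The first step is the standard pointwise comparison between the maximal commutator and iterated maximal operators: using the generalized H\"older inequality with the John--Nirenberg inequality on spaces of homogeneous type, one shows that for suitable $f$ and all $x$,
\begin{align*}
C_b(f)(x)\leq C\|b\|_{{\rm BMO}(X)}\, M_{L\log L}(f)(x),
\end{align*}
where $M_{L\log L}$ is the Orlicz maximal operator associated to the Young function $t(1+\log^+t)$, which on a space of homogeneous type is pointwise comparable to the iteration $M\circ M$ (here one invokes the doubling property and the basic covering/Vitali lemma available on $X$, cf.\ Section~2). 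Then the weak-type $(1,1)$ bound for $M$ applied twice, together with the elementary fact that $M$ maps $L(1+\log^+L)$ into weak $L^1$ with the quantitative tail bound $\mu(\{Mg>\lambda\})\leq C\int_X \frac{|g|}{\lambda}(1+\log^+\frac{|g|}{\lambda})\,d\mu$, yields inequality~\eqref{cbl}. One must be slightly careful to split $f$ at height $\lambda$ into its ``large'' and ``small'' parts in the usual Calder\'on--Zygmund fashion so that the $L\log L$ Orlicz norm is controlled by the right-hand side of \eqref{cbl}.

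For the ``if'' direction, assume the weak-type estimate \eqref{cbl} holds and deduce $b\in{\rm BMO}(X)$. Fix a ball $B\subset X$ and test the inequality on $f=\chi_B$. For $x\in B$ one has the lower bound
\begin{align*}
C_b(\chi_B)(x)\geq \frac{1}{\mu(B)}\int_B |b(x)-b(y)|\,d\mu(y)\geq |b(x)-m_B(b)|,
\end{align*}
so choosing $\lambda$ comparable to a multiple of $m_B(|b-m_B(b)|)$ and using the distributional estimate together with $\int_X \frac{|\chi_B|}{\lambda}(1+\log^+\frac{1}{\lambda})\,d\mu = \frac{\mu(B)}{\lambda}(1+\log^+\frac{1}{\lambda})$ forces a bound on $\mu(\{x\in B:|b(x)-m_B(b)|>\lambda\})$. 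Integrating the distribution function — or, more cleanly, running the standard argument that controls the median/mean oscillation by such a weak bound — gives $\frac{1}{\mu(B)}\int_B|b-m_B(b)|\,d\mu\leq C$ uniformly in $B$, i.e.\ $b\in{\rm BMO}(X)$. An alternative for this direction is to note that \eqref{cbl} in particular implies $C_b$ is of weak type, hence (by interpolation, since $C_b$ is trivially bounded on $L^\infty$ on bounded sets or by a direct $L^p$ argument) gives boundedness of $C_b$ on some $L^p$, and then invoke Theorem~\ref{thm-lp} to conclude $b\in{\rm BMO}(X)$; I would present the direct testing argument as the main line and mention this shortcut.

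The main obstacle I anticipate is the first step of the ``only if'' direction: making the pointwise bound $C_b(f)\lesssim \|b\|_{{\rm BMO}}\,M_{L\log L}f$ rigorous on a general space of homogeneous type, because the John--Nirenberg inequality and the precise comparison $M_{L\log L}\approx M\circ M$ need the doubling condition and a Vitali-type covering lemma rather than the Euclidean geometry, and one has to track constants depending only on $A_0$, $C_\mu$, and $n$. A secondary technical point is that quasi-metric balls need not be open and the ``ball containing $x$'' in the definition of $C_b$ must be handled with the quasi-triangle inequality when passing between balls centered at $x$ and arbitrary balls through $x$; this is routine but should be acknowledged. Once these homogeneous-type versions of the classical tools are in place (all available from the references cited in Section~2 and the estimates already used in Theorems~\ref{thm-mb} and~\ref{thm-lp}), the rest of the proof is a direct transcription of the Euclidean argument.
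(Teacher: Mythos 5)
Your ``only if'' direction (assuming $b\in{\rm BMO}(X)$ and deriving the weak-type estimate) matches the paper's argument: the paper applies Corollary~\ref{cor-cm} (which encodes $C_b f\lesssim \|b\|_{{\rm BMO}}M^2f$), the equivalence $M^2\approx M_{L\log L}$ from~\eqref{mml}, and the weak $L\log L$ estimate of Lemma~\ref{lem-ml}, plus the elementary inequality~\eqref{ab} to pull $\|b\|_{{\rm BMO}}$ out of the $\log^+$. Your sketch is the same chain of estimates, so this half is fine.

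The ``if'' direction is where there is a genuine gap. After testing on $f=\chi_{B_0}$ you correctly obtain
$$\mu\big(\{x\in B_0:|b(x)-m_{B_0}(b)|>\lambda\}\big)\le C\,\frac{\mu(B_0)}{\lambda}\Big(1+\log^+\frac1\lambda\Big),$$
but you then assert that ``integrating the distribution function'' yields $\frac1{\mu(B_0)}\int_{B_0}|b-m_{B_0}(b)|\,d\mu\le C$. This does not follow: using the trivial bound $\mu(B_0)$ for $\lambda\le 1$ and the weak bound for $\lambda\ge1$ gives $\int_0^\infty\mu(\dots)\,d\lambda\le\mu(B_0)+C\mu(B_0)\int_1^\infty\lambda^{-1}\,d\lambda$, and the tail integral diverges. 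A decay of order $1/\lambda$ simply does not integrate to an $L^1$ bound; it only gives $L^{p}$ bounds for $p<1$. This is exactly why the paper does not integrate at the $L^1$ level: it computes $\int_{B_0}|b-m_{B_0}(b)|^p\,d\mu=p\int_0^\infty\lambda^{p-1}\mu(\dots)\,d\lambda$ with $0<p<1$, for which the tail $\int_1^\infty\lambda^{p-2}\,d\lambda$ converges, and then invokes Lemma~\ref{lem-bmoq} (the equivalence of the $L^p$ oscillation, $0<p<\infty$, with the ${\rm BMO}$ norm on spaces of homogeneous type) to conclude $b\in{\rm BMO}(X)$. That $p<1$ device is the missing idea in your write-up; without it, or without an explicit appeal to Lemma~\ref{lem-bmoq}, the argument does not close.

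Your proposed shortcut for this direction is also not available as stated. You suggest that~\eqref{cbl} gives a weak-type bound for $C_b$, which by interpolation with a ``trivial'' $L^\infty$ bound yields $L^p$ boundedness and hence $b\in{\rm BMO}(X)$ via Theorem~\ref{thm-lp}. But $C_b$ is not bounded on $L^\infty(X)$: for $f\in L^\infty$ one only gets $C_b f(x)\le\|f\|_\infty\sup_{B\ni x}\frac1{\mu(B)}\int_B|b(x)-b(y)|\,d\mu(y)$, which is finite (let alone bounded) precisely when $b$ has the oscillation control one is trying to prove. So the interpolation endpoint is unavailable and the shortcut is circular. Stick with the direct testing argument, but carry it out at the $L^p$, $p<1$, level as the paper does and cite Lemma~\ref{lem-bmoq}.
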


We mention here the {\it sufficiency} of $b\in {\rm BMO}(X)$ for the boundedness of $C_b$ in Theorem \ref {thm-lp} and Theorem \ref {thm-cb-iff} has been studied by Hu, Lin and Yang (see \cite{HLY}) in a different way. We will give the proof of the converse part.

\indent For any locally integrable function $f$, let $f^*_\mu$ be the non-increasing rearrangement  of $f$ (see for example \cite{Le}), namely,
$$f^*_\mu(t)=\inf\{s>0:\mu(\{x\in X: |f(x)|>s \})<t \},\quad 0<t<\infty.$$


The following John-Nirenberg inequalities on spaces of homogeneous type come from \cite[Propositions 6, 7]{Kr}.

\begin{lemma}\label{lem-jn1}
If $f\in {\rm BMO}(X)$, then there exist positive constants $C_1$ and $C_2$ such that for every ball $B\subset X$ and every $\alpha>0$, we have
$$\mu(\{x\in B: |f(x)-m_B(f)|>\alpha \})\leq C_1\mu(B)\exp\Big\{- {C_2\over \|f\|_{{\rm BMO}(X)}}\alpha\Big\}.$$
\end{lemma}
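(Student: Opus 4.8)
The plan is to run the classical Calder\'on--Zygmund stopping-time proof of the John--Nirenberg inequality, transplanted to $(X,\rho,\mu)$ by means of a dyadic cube structure. By homogeneity of the estimate in $f$ we may normalize $\|f\|_{{\rm BMO}(X)}=1$: replacing $f$ by $f/\|f\|_{{\rm BMO}(X)}$ and $\alpha$ by $\alpha/\|f\|_{{\rm BMO}(X)}$ reduces matters to producing constants $C_1,C_2$ depending only on $A_0$, $C_\mu$ and $n$ with
$$\mu(\{x\in B:|f(x)-m_B(f)|>\alpha\})\le C_1\mu(B)e^{-C_2\alpha}$$
for every ball $B$ and every $\alpha>0$.

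First I would fix a system of dyadic cubes on $X$ --- Christ's cubes, or a Hyt\"onen--Kairema adjacent family $\mathcal D=\bigcup_{k}\mathcal D_k$ --- with the standard features: the cubes are nested, each $Q\in\mathcal D_k$ is comparable to a ball, $B(z_Q,c\delta^k)\subseteq Q\subseteq B(z_Q,C\delta^k)$ for a small fixed $\delta\in(0,1)$, and every ball $B$ is contained in some dyadic cube $Q_B$ with $\mu(Q_B)\le C\mu(B)$. The doubling condition (\ref{doubling condition})--(\ref{upper dimension}) then gives the two comparisons the argument needs: (a) if $\widehat Q$ is the dyadic parent of $Q$ then $\mu(\widehat Q)\le C_0\mu(Q)$; and (b) every dyadic $Q$ sits inside a ball $B_Q$ with $\mu(B_Q)\le C\mu(Q)$, whence
$$\frac{1}{\mu(Q)}\int_Q|f-m_Q(f)|\,d\mu\le\frac{2}{\mu(Q)}\int_Q|f-m_{B_Q}(f)|\,d\mu\le C\,\|f\|_{{\rm BMO}(X)}=:\Lambda.$$

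The core is a single Calder\'on--Zygmund step. Fix a dyadic cube $Q_0$, put $g:=f-m_{Q_0}(f)$ so that $m_{Q_0}(|g|)\le\Lambda$, and fix a threshold $\lambda>\Lambda$. Subdividing $Q_0$ dyadically and selecting the maximal subcubes $\{Q_j\}$ with $m_{Q_j}(|g|)>\lambda$ produces a pairwise disjoint family with $\sum_j\mu(Q_j)\le\lambda^{-1}\int_{Q_0}|g|\,d\mu\le(\Lambda/\lambda)\mu(Q_0)$; by maximality together with (a) one has $\lambda<m_{Q_j}(|g|)\le C_0\lambda$, so $|m_{Q_j}(g)|\le C_0\lambda$; and $|g|\le\lambda$ holds $\mu$-a.e.\ on $Q_0\setminus\bigcup_jQ_j$ by the dyadic martingale convergence theorem, which requires no extra hypothesis. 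Since $g-m_{Q_j}(g)=f-m_{Q_j}(f)$, writing $c_0:=C_0\lambda$ we obtain, for every $\alpha>c_0$,
$$\{x\in Q_0:|f-m_{Q_0}(f)|>\alpha\}\subseteq\bigcup_j\{x\in Q_j:|f-m_{Q_j}(f)|>\alpha-c_0\}.$$

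Finally I would iterate. Set $\phi(\alpha):=\sup_{Q\in\mathcal D}\mu(Q)^{-1}\mu(\{x\in Q:|f-m_Q(f)|>\alpha\})$, which is trivially $\le1$. The last inclusion together with $\sum_j\mu(Q_j)\le(\Lambda/\lambda)\mu(Q_0)$ gives $\phi(\alpha)\le(\Lambda/\lambda)\,\phi(\alpha-c_0)$ for all $\alpha>c_0$; since $\Lambda/\lambda<1$, iterating this bound (using $\phi\le1$ to stop) yields $\phi(\alpha)\le (\lambda/\Lambda)\,e^{-C_2\alpha}$ with $C_2=c_0^{-1}\log(\lambda/\Lambda)>0$. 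For a general ball $B$ one encloses $B$ in a dyadic $Q_B\supseteq B$ with $\mu(Q_B)\le C\mu(B)$, estimates $|m_B(f)-m_{Q_B}(f)|\le C\|f\|_{{\rm BMO}(X)}$ via (b), and concludes
$$\mu(\{x\in B:|f-m_B(f)|>\alpha\})\le\mu(\{x\in Q_B:|f-m_{Q_B}(f)|>\alpha-C\})\le C\mu(B)\,\phi(\alpha-C),$$
which is of the claimed form; undoing the normalization restores $\|f\|_{{\rm BMO}(X)}$ and, with $\lambda$ chosen proportional to $\Lambda$, puts $C_2$ in the form $C_2/\|f\|_{{\rm BMO}(X)}$. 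The one genuinely delicate point is the preparatory step: building a dyadic cube structure on a quasi-metric measure space and checking that the comparisons (a)--(b) hold with constants controlled only by $A_0$, $C_\mu$ and $n$; once that is in place the stopping-time iteration is formal and identical to the Euclidean case.
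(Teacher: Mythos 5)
The paper does not prove Lemma~\ref{lem-jn1}; it is stated by citation to \cite[Propositions 6, 7]{Kr}, so there is no in-text argument to compare against. Your dyadic Calder\'on--Zygmund iteration is the standard route to John--Nirenberg and the core of it (normalization, stopping cubes, the inclusion, and the exponential bootstrap from $\phi(\alpha)\le(\Lambda/\lambda)\phi(\alpha-c_0)$) is correct.

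One point deserves sharpening. You list, among the ``standard features'' of the dyadic system, that ``every ball $B$ is contained in some dyadic cube $Q_B$ with $\mu(Q_B)\le C\mu(B)$,'' and you present Christ's cubes and a Hyt\"onen--Kairema adjacent family as interchangeable choices. For a \emph{single} Christ system this containment property genuinely fails: a ball can straddle dyadic boundaries at every scale, so there need be no dyadic cube of comparable measure containing it. It is precisely to recover this property that one passes to a finite adjacent family $\mathcal D=\bigcup_{j=1}^{K}\mathcal D^{j}$. Your proof is therefore correct provided $\mathcal D$ is read as the Hyt\"onen--Kairema adjacent family; the stopping-time step must be run inside one fixed $\mathcal D^{j}$ (that is where nestedness lives), while $\phi$ may be taken as the supremum over all cubes of all $K$ systems, which is enough for the final transfer from a cube $Q_B\supseteq B$ back to the ball $B$. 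With that reading in place, the argument is sound, and the remaining steps (parent comparison $\mu(\widehat Q)\le C_0\mu(Q)$, the Lebesgue differentiation/dyadic martingale convergence on doubling spaces, the telescoping $|m_B(f)-m_{Q_B}(f)|\le C\|f\|_{{\rm BMO}(X)}$) are all standard and correctly invoked.
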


\begin{lemma}\label{lem-jn2}
If $f\in {\rm BMO}(X)$, then there exist positive constants $C_3$ and $C_4$ such that for every ball $B\subset X$,
$$\int_{B}\exp\{C_3|f(x)-m_B(f)| \}d\mu(x)<C_4\mu(B).$$
\end{lemma}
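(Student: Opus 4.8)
The plan is to derive the exponential integrability directly from the distributional John--Nirenberg estimate of Lemma~\ref{lem-jn1} by a routine layer-cake integration. Write $N:=\|f\|_{{\rm BMO}(X)}$; we may assume $N>0$, since if $N=0$ then $f$ is $\mu$-a.e.\ constant on each ball and the statement is trivial (any $C_3>0$ and $C_4=1$ work). Fix a ball $B\subset X$; note that $\mu(B)<\infty$ by the doubling condition and that $m_B(f)$ is well defined because $f\in L^1_{loc}(X)$.

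First I would invoke the distribution function (layer-cake) formula: for any measurable $g\ge 0$ on $B$,
$$\int_B g\,d\mu=\int_0^\infty \mu\big(\{x\in B: g(x)>\lambda\}\big)\,d\lambda,$$
and apply it to $g(x)=\exp\{C_3|f(x)-m_B(f)|\}$, where $C_3>0$ is a constant to be chosen. For $0<\lambda\le 1$ the set $\{x\in B:g(x)>\lambda\}$ is contained in $B$, contributing at most $\mu(B)$ to the integral. For $\lambda>1$ one has $g(x)>\lambda$ if and only if $|f(x)-m_B(f)|>(\log\lambda)/C_3$, so Lemma~\ref{lem-jn1} yields
$$\mu\big(\{x\in B: g(x)>\lambda\}\big)\le C_1\,\mu(B)\,\exp\Big\{-\frac{C_2}{N}\cdot\frac{\log\lambda}{C_3}\Big\}=C_1\,\mu(B)\,\lambda^{-C_2/(C_3 N)}.$$

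The only real choice is to take $C_3$ small enough that the exponent of $\lambda$ exceeds $1$; concretely, set $C_3:=C_2/(2N)$, so that $C_2/(C_3N)=2$. Combining the two ranges,
$$\int_B \exp\{C_3|f-m_B(f)|\}\,d\mu\le \mu(B)+C_1\mu(B)\int_1^\infty \lambda^{-2}\,d\lambda=(1+C_1)\mu(B),$$
which proves the lemma with $C_3=C_2/(2\|f\|_{{\rm BMO}(X)})$ and $C_4=1+C_1$, where $C_1,C_2$ are the constants furnished by Lemma~\ref{lem-jn1}.

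I do not anticipate any genuine obstacle: this is the standard passage from the weak (distributional) form of the John--Nirenberg inequality to its exponential-integrability form, and all ingredients needed---finiteness of $\mu(B)$, measurability of $f$, and applicability of the layer-cake representation---are immediate in the space-of-homogeneous-type setting. The one point deserving a moment's care is that $C_3$ must lie strictly below the threshold $C_2/\|f\|_{{\rm BMO}(X)}$ dictated by Lemma~\ref{lem-jn1}; any strictly smaller value is admissible, at the cost of enlarging $C_4$ correspondingly.
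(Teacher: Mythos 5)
Your argument is correct: the layer-cake integration of the distributional John--Nirenberg bound from Lemma~\ref{lem-jn1}, with $C_3$ chosen small enough that the resulting power of $\lambda$ is integrable at infinity, is the standard and complete derivation of the exponential form. Note, however, that the paper does not itself prove this lemma; both Lemma~\ref{lem-jn1} and Lemma~\ref{lem-jn2} are quoted directly from Kronz (\cite[Propositions 6, 7]{Kr}), so there is no in-text proof to compare against. Your derivation effectively shows that only the first of the two cited propositions is really needed, since the second follows from it by the routine argument you give; the one cosmetic difference from the ``textbook'' normalization is that your $C_3$ depends on $\|f\|_{{\rm BMO}(X)}$, which is perfectly consistent with the statement as written (the constants are allowed to depend on $f$) and is in fact how the lemma is later used in the proof of Lemma~\ref{lem-mpb-weight}.
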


On spaces of homogeneous type, we also have the following equivalent ${\rm BMO}$ norm, see for example \cite[Theorem 5.5]{HT}.

\begin{lemma}\label{lem-bmoq}
Let $0<p<\infty$, and $f$ be a measurable function on $X$. Then $f$ is in $L_{loc}^1(X)$ and satisfies
$$\sup_{B}\left( {1\over \mu(B)}\int_B|f(x)-m_B(f)|^pd\mu(x)\right)^{1\over p}<\infty$$
if and only if $f$ is in ${\rm BMO}(X)$. In such a case,
we have
$$ \|f\|_{{\rm BMO}(X)}\approx\sup_{B}\left( {1\over \mu(B)}\int_B|f(x)-m_B(f)|^pd\mu(x)\right)^{1\over p}.$$
\end{lemma}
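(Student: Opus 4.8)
The plan is to prove Lemma \ref{lem-bmoq}, the equivalence between the $L^p$-averaged oscillation condition and membership in ${\rm BMO}(X)$, together with the norm equivalence. One direction is essentially immediate: if
$$\sup_{B}\Big({1\over\mu(B)}\int_B|f(x)-m_B(f)|^p\,d\mu(x)\Big)^{1/p}<\infty,$$
then for $p\ge 1$ this dominates $\|f\|_{{\rm BMO}(X)}$ by Jensen's inequality applied to the convex function $t\mapsto t^p$ on each ball, while for $0<p<1$ one first reduces to the case of the equivalent norm $\|\cdot\|'_{{\rm BMO}(X)}$ recorded in the introduction, since $\inf_c$ only makes the left side smaller than the choice $c=m_B(f)$. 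In either regime the inequality $\|f\|_{{\rm BMO}(X)}\lesssim \sup_B(\cdots)^{1/p}$ holds with constant depending only on $p$ (and trivially when $p\ge1$).

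The substantive direction is the reverse: assuming $f\in{\rm BMO}(X)$, I would show the $L^p$-average of the oscillation over any ball $B$ is controlled by $\|f\|_{{\rm BMO}(X)}$. The natural tool is the John--Nirenberg inequality on spaces of homogeneous type, which is available to us as Lemma \ref{lem-jn1}: for every ball $B$ and every $\alpha>0$,
$$\mu(\{x\in B:|f(x)-m_B(f)|>\alpha\})\le C_1\,\mu(B)\,\exp\Big(-{C_2\over\|f\|_{{\rm BMO}(X)}}\alpha\Big).$$
From this the standard layer-cake computation gives, for any $0<p<\infty$,
$$\int_B|f(x)-m_B(f)|^p\,d\mu(x)=p\int_0^\infty\alpha^{p-1}\mu(\{x\in B:|f(x)-m_B(f)|>\alpha\})\,d\alpha\le C_1\,\mu(B)\,p\int_0^\infty\alpha^{p-1}e^{-C_2\alpha/\|f\|_{{\rm BMO}(X)}}\,d\alpha,$$
and evaluating the Gamma integral yields the bound $C_1\,\Gamma(p+1)\,(\|f\|_{{\rm BMO}(X)}/C_2)^p\,\mu(B)$. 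Taking the $p$-th root and then the supremum over all balls $B$ gives
$$\sup_B\Big({1\over\mu(B)}\int_B|f-m_B(f)|^p\,d\mu\Big)^{1/p}\le C_1^{1/p}\,\Gamma(p+1)^{1/p}\,C_2^{-1}\,\|f\|_{{\rm BMO}(X)},$$
which also shows $f\in L^1_{loc}(X)$ (take $p=1$, or note local integrability follows from the finiteness of a single such average on any ball). Combining the two directions produces the asserted norm comparison $\|f\|_{{\rm BMO}(X)}\approx\sup_B(\cdots)^{1/p}$ with implicit constants depending only on $p$ and on the John--Nirenberg constants $C_1,C_2$, which in turn depend only on the structural constants of $(X,\rho,\mu)$.

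The main obstacle is essentially bookkeeping rather than a genuine difficulty: one must be careful that the John--Nirenberg constants $C_1, C_2$ in Lemma \ref{lem-jn1} are uniform over all balls (they are, since they depend only on the doubling constant $C_\mu$ and the quasi-triangle constant $A_0$), and one must handle the case $0<p<1$ so that the ``if'' direction is not vacuous there; this is done via the equivalent norm $\|\cdot\|'_{{\rm BMO}(X)}$ and the elementary fact that for $0<p<1$ the quantity $\big({1\over\mu(B)}\int_B|f-c|^p\big)^{1/p}$ can still be compared to the oscillation after replacing $c=m_B(f)$ by the near-optimal constant, using the weak-type estimate that Chebyshev gives from the $L^p$-average. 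Since the statement only claims equivalence of finiteness plus comparability of the two quantities, no sharp constants are needed and the argument is complete once the Gamma-integral estimate above is carried out.
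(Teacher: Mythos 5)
There is a genuine gap, and it is in the direction you dismiss as ``essentially immediate.'' For $p\ge 1$ the implication ``$L^p$-average finite $\Rightarrow f\in{\rm BMO}(X)$'' follows at once from Jensen, as you say, and your John--Nirenberg/layer-cake computation correctly handles ``${\rm BMO}(X)\Rightarrow L^p$-average finite'' for every $0<p<\infty$ (the Gamma-integral bound is right). But for $0<p<1$ the directions trade difficulty: Jensen now points the wrong way (the $L^p$ average on a fixed ball is \emph{smaller} than the $L^1$ average), so the implication ``$L^p$-average finite $\Rightarrow f\in{\rm BMO}(X)$'' becomes the substantive one, and your two paragraphs leave it unproved. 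The reduction to the $\inf_c$ form of the norm does not help, and the Chebyshev step you invoke produces only a polynomial distributional bound $\mu(\{x\in B:|f-c_B|>\lambda\})\lesssim A^p\lambda^{-p}\mu(B)$ with $p<1$; integrating this in $\lambda$ diverges at infinity, so it does not yield the $L^1$ oscillation bound, nor can it be fed directly into Lemma~\ref{lem-jn1} (which assumes $f\in{\rm BMO}(X)$ already). Closing this requires a genuine self-improvement argument --- a Calder\'on--Zygmund stopping-time decomposition or good-$\lambda$ iteration upgrading the weak polynomial decay to exponential decay, as in Str\"omberg's proof of the $0<p<1$ John--Nirenberg self-improvement --- and that is precisely what is missing.

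For context, the paper does not supply its own proof of Lemma~\ref{lem-bmoq}; it simply cites Theorem~5.5 of Hart and Torres \cite{HT}. So there is no in-paper argument to compare against, but if you want a self-contained proof you must add the stopping-time argument for the case $0<p<1$; everything else in your write-up is sound.
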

\setcounter{equation}{0}

A generalized H\"older's inequality will be used in our argument.
 For any measurable set $E$ with $\mu(E)<\infty$, for any suitable function $f$, the norm $\|f\|_{L\log L, E}$ is defined by
\begin{align*}
\|f\|_{L\log L, E}=\inf\left\{\lambda>0: {1\over \mu(E)}\int_E{|f(y)|\over \lambda}\log\Big(2+{|f(y)|\over\lambda}\Big)d\mu(y)\leq 1 \right\}.
\end{align*}
The maximal operator $M_{L\log L}$ is defined by
$$M_{L\log L}f(x)=\sup_{B\ni x}\|f\|_{L\log L, B}.$$
The norm $\|f\|_{\exp L, E}$ is defined by
$$\|f\|_{\exp L, E}=\inf\left\{\lambda>0:{1\over \mu(E)}\int_E\exp\left({|f(x)|\over\lambda}\right)d\mu(x) \leq 2  \right\}.$$
Then the following generalized H\"older's inequality:
\begin{align}\label{holder}
{1\over \mu(E)}\int_E |f(x)g(x)|d\mu(x)\leq C\|f\|_{L\log L, E}\|g\|_{\exp L, E}
\end{align}
holds for any suitable functions $f$ and $g$ (see for example \cite{RR}). From \cite[p. 90]{PS}, we can see
\begin{align}\label{mml}
M^2f(x)\approx M_{L\log L}f(x),
\end{align}
where $M^2=M\circ M$.


\begin{lemma}[\cite{CS}]\label{lem-ml}
There is a positive constant $C$ such that for any bounded function $f$ with bounded support and for all $\lambda>0$,
$$\mu\left(\{y\in X: M_{L\log L}f(x)>\lambda \}\right)\leq C\int_X {|f(y)|\over \lambda}\left(1+\log^+\left({|f(y)|\over \lambda} \right) \right)d\mu(y).$$
\end{lemma}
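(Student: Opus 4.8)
The plan is to derive this endpoint estimate from the pointwise equivalence $M^2 f\approx M_{L\log L}f$ recorded in \eqref{mml} (where $M^2=M\circ M$), combined with the classical weak type $(1,1)$ bound for the Hardy--Littlewood maximal operator $M$ on $(X,\rho,\mu)$, which holds because of the doubling condition. By \eqref{mml} there is $C_0>0$ with $M_{L\log L}f\le C_0 M^2 f$, so $\{M_{L\log L}f>\lambda\}\subseteq\{M^2f>\lambda/C_0\}$; since $M$, hence $M^2$, is positively homogeneous of degree one, replacing $f$ by $f/(\lambda/C_0)$ reduces everything to proving $\mu(\{M^2 f>1\})\le C\int_X|f|(1+\log^+|f|)\,d\mu$. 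Note that $f$ bounded with bounded support guarantees $f\in L^1(X)$, so $Mf$ is finite $\mu$-a.e.\ and all integrals below are finite.

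The first ingredient I would isolate is the \emph{refined} weak type $(1,1)$ bound: for every $h\in L^1(X)$ and every $t>0$,
$$\mu\big(\{x\in X:\,Mh(x)>t\}\big)\le \frac{C}{t}\int_{\{|h|>t/2\}}|h(y)|\,d\mu(y).$$
This follows by writing $h=h\chi_{\{|h|>t/2\}}+h\chi_{\{|h|\le t/2\}}$; the second term has sup-norm $\le t/2$, hence $M$ of it is $\le t/2$, so $\{Mh>t\}\subseteq\{M(h\chi_{\{|h|>t/2\}})>t/2\}$, and the usual weak $(1,1)$ inequality applied to $h\chi_{\{|h|>t/2\}}$ gives the claim.

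Now set $g=Mf\ge 0$. Splitting $g$ at height $1/2$ exactly as above yields $\{Mg>1\}\subseteq\{M(g\chi_{\{g>1/2\}})>1/2\}$, so by weak $(1,1)$,
$$\mu\big(\{M^2f>1\}\big)\le 2C\int_{\{Mf>1/2\}}Mf(y)\,d\mu(y).$$
It remains to estimate this last integral. By the layer-cake formula,
$$\int_{\{Mf>1/2\}}Mf\,d\mu=\tfrac12\,\mu\big(\{Mf>1/2\}\big)+\int_{1/2}^{\infty}\mu\big(\{Mf>t\}\big)\,dt.$$
The first summand is $\le C\int_X|f|\,d\mu\le C\int_X|f|(1+\log^+|f|)\,d\mu$ by the ordinary weak $(1,1)$ bound. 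For the second, apply the refined weak $(1,1)$ bound and then Fubini:
$$\int_{1/2}^{\infty}\mu\big(\{Mf>t\}\big)\,dt\le C\int_{1/2}^{\infty}\frac1t\int_{\{|f|>t/2\}}|f(y)|\,d\mu(y)\,dt=C\int_X|f(y)|\Big(\int_{1/2}^{\max(1/2,\,2|f(y)|)}\frac{dt}{t}\Big)d\mu(y),$$
and the inner integral equals $\log^+\!\big(4|f(y)|\big)\le (\log 4)\big(1+\log^+|f(y)|\big)$. Adding the two summands and absorbing the constant coming from \eqref{mml} completes the proof.

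All the estimates are elementary; the only points that demand attention are careful bookkeeping of the absolute constants — in particular those implicit in \eqref{mml} and in the two ``split at height $t/2$'' reductions — and the verification that $M$ satisfies the weak type $(1,1)$ bound on the quasi-metric measure space $(X,\rho,\mu)$. The latter is a standard consequence of the basic Vitali-type covering lemma available under the doubling condition, notwithstanding the fact that $\rho$-balls need not be open. I do not anticipate any genuine obstacle.
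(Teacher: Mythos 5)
The paper does not prove Lemma~\ref{lem-ml}; it is imported verbatim from Chen and Sawyer \cite{CS}, so there is no in-text argument to compare against. Your blind proof is correct and essentially reproduces the classical route for this endpoint bound: reduce to $M^2=M\circ M$ via the equivalence \eqref{mml}, establish the refined weak $(1,1)$ estimate by cutting at height $t/2$, feed $g=Mf$ back into that estimate, and combine layer-cake with Fubini to produce the $L\log L$ modulus. All the steps check out, including the implicit finiteness of $\int_{\{Mf>1/2\}}Mf\,d\mu$ (finite since $f$ bounded with bounded support gives $\mu(\{Mf>1/2\})<\infty$ and $Mf\le\|f\|_\infty$ there) and the rescaling at the end (the dilation constant $C_0$ is absorbed via the inequality $1+\log^+(\alpha\beta)\le(1+\log^+\alpha)(1+\log^+\beta)$, which is \eqref{ab} in the paper). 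One minor point worth making explicit if you were to include this proof: the constant in the refined weak $(1,1)$ bound is $2C$ (from the doubled threshold $t/2$), not $C$, but this of course only changes the absolute constant. Overall this is a clean, self-contained derivation that the paper chose to outsource to \cite{CS}.
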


We need the following relation between operators $C_b$ and $[M,b]$. The proofs are similar to those of Lemma 3.1 and Lemma 3.2 in \cite{AGKM}, respectively.

\begin{lemma}\label{mc}
Let $b$ be any non-negative locally integrable function. Then
$$|[M, b]f(x)|\leq C_b(f)(x)$$
for all $f\in L_{loc}^1(X)$.
\end{lemma}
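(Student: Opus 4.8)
The plan is to unwind the definitions and exploit the non-negativity of $b$ to bound $b(x)Mf(x)$ from below by $M(bf)(x)$ minus the maximal commutator. Fix $f\in L^1_{loc}(X)$ and $x\in X$, and fix any ball $B\ni x$. On that ball we compare the average of $|bf|$ with $b(x)$ times the average of $|f|$. Since $b\ge 0$, for $\mu$-a.e.\ $y$ we have $b(y)|f(y)| \le |b(x)-b(y)|\,|f(y)| + b(x)|f(y)|$, so averaging over $B$ gives
\begin{align*}
\frac{1}{\mu(B)}\int_B b(y)|f(y)|\,d\mu(y)
&\le \frac{1}{\mu(B)}\int_B |b(x)-b(y)|\,|f(y)|\,d\mu(y) + b(x)\,\frac{1}{\mu(B)}\int_B |f(y)|\,d\mu(y)\\
&\le C_b(f)(x) + b(x)\,Mf(x).
\end{align*}

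Next I would take the supremum over all balls $B\ni x$ on the left-hand side; since the right-hand side does not depend on $B$, this yields $M(bf)(x) \le C_b(f)(x) + b(x)Mf(x)$, i.e.\ $M(bf)(x) - b(x)Mf(x) \le C_b(f)(x)$. For the reverse inequality, note that for each $B\ni x$, again using $b\ge0$, $b(x)|f(y)| \le |b(x)-b(y)|\,|f(y)| + b(y)|f(y)|$, so the same averaging argument gives $b(x)Mf(x) \le C_b(f)(x) + M(bf)(x)$ after taking the supremum over $B$; here one should be a little careful and note $b(x)\,\frac{1}{\mu(B)}\int_B|f|\le C_b(f)(x)+\frac{1}{\mu(B)}\int_B b|f| \le C_b(f)(x)+M(bf)(x)$ for every $B$, and then sup over $B$ on the left. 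Combining the two bounds gives $|M(bf)(x)-b(x)Mf(x)| \le C_b(f)(x)$, which is exactly $|[M,b]f(x)|\le C_b(f)(x)$.

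I do not expect any serious obstacle: the argument is purely pointwise and elementary once $b\ge 0$ is used, and it is the direct analogue of \cite[Lemma 3.1]{AGKM} with Lebesgue balls replaced by quasi-metric balls $B(x,r)$. The only mild point of care is that $M$ here is $M_1$, so no exponent manipulation is needed, and that all inequalities between averages hold for every admissible ball before passing to the supremum, so that the supremum can be taken independently on each side. One should also observe that the inequality is trivially valid (both sides possibly infinite) when $bf\notin L^1_{loc}$, so no integrability hypothesis beyond $f\in L^1_{loc}(X)$ and $b$ locally integrable is needed for the statement as phrased.
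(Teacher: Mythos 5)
Your argument is correct and is essentially the same elementary pointwise argument that the paper invokes by reference to \cite[Lemma 3.1]{AGKM}: for each ball $B\ni x$ one compares $\frac{1}{\mu(B)}\int_B b|f|\,d\mu$ with $b(x)\frac{1}{\mu(B)}\int_B|f|\,d\mu$ using $|b(y)-b(x)|\ge \pm(b(y)-b(x))$, notes that nonnegativity of $b$ makes $|bf|=b|f|$ and $b(x)\sup_B(\cdot)=\sup_B b(x)(\cdot)$, and then takes suprema over $B$ on the appropriate sides before combining the two one-sided bounds. No gap.
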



\begin{lemma}\label{lem-cbm}
Let $b$ be any locally integrable function on $X$. Then
$$|[M,b]f(x)|\leq C_b(f)(x)+2b^-(x)Mf(x)$$
holds for all $f\in L_{loc}^1(X)$.
\end{lemma}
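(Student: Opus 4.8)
The plan is to reduce everything to Lemma~\ref{mc}, exactly as in the Euclidean argument of \cite[Lemma 3.2]{AGKM}. The key observation is that, because the maximal operator $M$ carries the absolute value inside the average, one has $M(bf)(x)=M(|b|f)(x)$ for every $x$; hence
$$[M,b]f(x)=M(|b|f)(x)-b(x)Mf(x)=\big(M(|b|f)(x)-|b(x)|Mf(x)\big)+\big(|b(x)|-b(x)\big)Mf(x)=[M,|b|]f(x)+2b^-(x)Mf(x).$$

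Next I would apply Lemma~\ref{mc} to the nonnegative, locally integrable function $|b|$ (admissible since $b\in L_{loc}^1(X)$), which gives $\big|[M,|b|]f(x)\big|\le C_{|b|}(f)(x)$. Then, inserting the elementary pointwise inequality $\big||b(x)|-|b(y)|\big|\le |b(x)-b(y)|$ into the defining supremum of the maximal commutator, one gets $C_{|b|}(f)(x)\le C_b(f)(x)$, so that $\big|[M,|b|]f(x)\big|\le C_b(f)(x)$ for all $x\in X$.

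Finally, combining the displayed identity with the triangle inequality and $b^-\ge 0$ yields
$$\big|[M,b]f(x)\big|\le \big|[M,|b|]f(x)\big|+2b^-(x)Mf(x)\le C_b(f)(x)+2b^-(x)Mf(x),$$
which is the assertion. There is no substantial obstacle here: the only thing that needs to be noticed is the identity $M(bf)=M(|b|f)$, which is precisely what lets one peel off the $2b^-Mf$ term and land in the nonnegative case already handled by Lemma~\ref{mc}; all remaining steps are one-line pointwise estimates, and nothing about the homogeneous-type structure of $X$ beyond the definition of $M$ as a supremum of averages over balls is used. (Alternatively, one could argue directly from that definition, splitting the average via $|b(y)|\le|b(y)-b(x)|+|b(x)|$ for the upper bound and via $|b(y)|\ge b(x)-|b(y)-b(x)|$ for the lower bound, with a case distinction on the sign of $b(x)$; I would prefer the reduction to Lemma~\ref{mc} as the cleaner route.)
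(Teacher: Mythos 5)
Your proof is correct, and it reproduces precisely the argument the paper is alluding to when it says the proof ``is similar to Lemma~3.2 in \cite{AGKM}'': observe $M(bf)=M(|b|f)$, split off $(|b|-b)Mf=2b^-Mf$, reduce to Lemma~\ref{mc} for the nonnegative symbol $|b|$, and finish with $C_{|b|}(f)\le C_b(f)$ via the reverse triangle inequality. There is nothing to flag.
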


\begin{proposition}\label{thm-mm}
Let $b\in {\rm BMO}(X)$ and let $0<\delta<1$. Then there exists a positive constant $C=C(\delta)$ such that
$$M_\delta(C_b(f))(x)\leq C\|b\|_{{\rm BMO}(X)}M^2f(x),\quad x\in X,$$
for all $f\in L_{loc}^1(X)$.
\end{proposition}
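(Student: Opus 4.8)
The plan is to follow the classical Fefferman--Stein type argument: fix a ball $B \ni x$, split $f = f_1 + f_2$ where $f_1 = f\chi_{2A_0 B}$ (a suitably dilated ball) and $f_2 = f\chi_{X\setminus 2A_0 B}$, estimate $\frac{1}{\mu(B)}\int_B |C_b(f)(z) - c|^\delta\, d\mu(z)$ for an appropriate constant $c$ depending on $B$, and take the $1/\delta$ power. The key point is that $C_b$ is \emph{sublinear} (as a supremum of averages of $|b(z)-b(y)||f(y)|$), so $C_b(f) \le C_b(f_1) + C_b(f_2)$ pointwise, and since $0 < \delta < 1$ we may use the elementary inequality $|u+v|^\delta \le |u|^\delta + |v|^\delta$ to split the local integral accordingly. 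I expect to choose $c$ to be (roughly) a value of $C_b(f_2)$ at a reference point of $B$, so that the two terms to control are $\frac{1}{\mu(B)}\int_B C_b(f_1)(z)^\delta\,d\mu(z)$ and $\frac{1}{\mu(B)}\int_B |C_b(f_2)(z) - C_b(f_2)(x_B)|^\delta\,d\mu(z)$.

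For the \emph{local term} with $f_1$: since $\delta < 1$, Jensen's (or Kolmogorov's) inequality reduces matters to estimating $C_b(f_1)$ in weak-$L^1$ or in $L^{1}$-type norms over $B$. Here the natural route is to write, for each $z \in B$ and each ball $B_0 \ni z$, $|b(z) - b(y)| \le |b(z) - m_{B_0}(b)| + |m_{B_0}(b) - b(y)|$, producing a term controlled by $M^\sharp b(z)\cdot M f_1(z)$-type expressions and a term controlled by $M(|b - m_{(\cdot)}(b)| f_1)$. Using Lemma \ref{lem-bmoq} to replace oscillation averages by $\|b\|_{{\rm BMO}(X)}$, the generalized Hölder inequality \eqref{holder} pairing $\exp L$ and $L\log L$ norms (the John--Nirenberg Lemma \ref{lem-jn2} gives the exponential integrability of $b - m_{B_0}(b)$), and the equivalence \eqref{mml} $M^2 f \approx M_{L\log L} f$, one obtains a bound of the form $C\|b\|_{{\rm BMO}(X)} M_{L\log L} f(x) \le C\|b\|_{{\rm BMO}(X)} M^2 f(x)$ after averaging over $B$ and using that $x \in B$. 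The doubling condition \eqref{doubling condition} is used to pass from integrals over $2A_0 B$ back to $B$.

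For the \emph{far term} with $f_2$: the standard mechanism is that for $z \in B$ and $w = x_B$ (center or any point of $B$), and any ball $B_0$ contributing to $C_b(f_2)(z)$, the support condition forces $B_0$ to be comparable to or larger than $B$, so one can compare the averages defining $C_b(f_2)(z)$ and $C_b(f_2)(w)$; the difference $|C_b(f_2)(z) - C_b(f_2)(w)|$ is dominated by terms of the form $\frac{1}{\mu(B_0)}\int_{B_0} |b(z) - b(w)| |f_2(y)|\,d\mu(y)$ plus oscillation-of-$b$ contributions, all of which telescope across the Whitney-type annuli $2^{k+1}A_0 B \setminus 2^k A_0 B$ and sum (using the doubling/upper-dimension bound \eqref{upper dimension} and BMO control of $m_{2^{k+1}B}(b) - m_{2^k B}(b)$, which grows like $k\|b\|_{{\rm BMO}(X)}$) to something $\lesssim \|b\|_{{\rm BMO}(X)} M f(x) \le \|b\|_{{\rm BMO}(X)} M^2 f(x)$.

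The main obstacle I anticipate is the far term: carefully verifying that $|C_b(f_2)(z) - C_b(f_2)(w)|$ is genuinely small for $z, w \in B$ — i.e. handling the supremum inside $C_b$ uniformly — requires showing that only balls $B_0$ of radius $\gtrsim r_B$ matter (via the support of $f_2$ and the quasi-triangle inequality \eqref{eqn:quasitriangleineq} with constant $A_0$), and then controlling the resulting series $\sum_k 2^{-k\epsilon}(k+1)\|b\|_{{\rm BMO}(X)}$, whose convergence hinges on the quantitative doubling estimate \eqref{upper dimension}. The quasi-metric (rather than metric) setting means all dilation factors pick up powers of $A_0$, which must be tracked but does not change the structure of the argument.
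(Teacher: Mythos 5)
There is a genuine gap, rooted in the decision to subtract a constant $c$ and estimate $\tfrac{1}{\mu(B)}\int_B |C_b(f)(z)-c|^\delta\,d\mu(z)$. That is the right quantity for a bound on $M^\sharp_\delta(C_b f)$, but Proposition~\ref{thm-mm} asserts a pointwise bound on $M_\delta(C_b f)(x)=\sup_{B\ni x}\bigl(\tfrac{1}{\mu(B)}\int_B (C_bf)^\delta\bigr)^{1/\delta}$. To upgrade your oscillation estimate to this you would also need $c\lesssim\|b\|_{\rm BMO}M^2f(x)$, and with $c\approx C_b(f_2)(x_B)$ this is not available: $C_b(f_2)(x_B)$ involves the pointwise value $|b(x_B)-b(y)|$, which BMO does not control. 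Compounding this, the far-term comparison $|C_b(f_2)(z)-C_b(f_2)(x_B)|$ produces, as you yourself write, a term $\tfrac{1}{\mu(B_0)}\int_{B_0}|b(z)-b(x_B)||f_2|\,d\mu\lesssim |b(z)-b(x_B)|\,Mf(x)$, and $|b(z)-b(x_B)|$ does not telescope away; it is an uncontrolled pointwise difference. So the ``main obstacle'' you flagged is not merely a matter of tracking $A_0$-constants: the oscillation route through $C_b(f_2)(x_B)$ genuinely breaks.

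The paper sidesteps both problems by not treating $C_b$ as a sublinear black box. It starts from the identity $C_b(f)(y)=M\bigl((b-b(y))f\bigr)(y)$ and inserts the fixed average $m_{3B}(b)$, writing $b-b(y)=(b-m_{3B}(b))+(m_{3B}(b)-b(y))$; the second summand is a scalar for each $y$, so it pulls out of $M$, and after also splitting $f=f_1+f_2$ one gets
\begin{equation*}
C_b(f)(y)\leq M\bigl((b-m_{3B}(b))f_1\bigr)(y)+M\bigl((b-m_{3B}(b))f_2\bigr)(y)+|m_{3B}(b)-b(y)|\,Mf(y),
\end{equation*}
and one directly averages the $\delta$-th powers of these three terms over $B$ with no constant subtraction. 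The first is handled by Kolmogorov's inequality (weak $(1,1)$ of $M$) plus the $\exp L$--$L\log L$ H\"older inequality~\eqref{holder} and John--Nirenberg; the second uses that $M$ of a function supported off $3B$ is essentially constant on $B$, then again~\eqref{holder}; the third uses H\"older in $y$ and Lemma~\ref{lem-bmoq}. Note the crucial use of the \emph{fixed} reference average $m_{3B}(b)$ rather than the ball-dependent $m_{B_0}(b)$ you propose; this is what lets the constant escape the supremum in the definition of $C_b$. Your ingredients (Kolmogorov, the $\exp L$--$L\log L$ pairing, John--Nirenberg, $M^2\approx M_{L\log L}$) are all correct and do appear in the paper's argument; it is the initial decomposition that needs to change.
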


\begin{proof}[Proof of Proposition \ref {thm-mm}]
Let $x\in X$ and fix a ball $B$ containing $x$. Let $f=f_1+f_2$, where $f_1=f\chi_{3B}$.  For any $y\in X$, we have
\begin{align*}
C_b(f)(y)&=M\big((b-b(y))f\big)(y)=M\big((b-m_{3B}(b)+m_{3B}(b)-b(y))f\big)(y)\\
&\leq M((b-m_{3B}(b))f)(y)+|m_{3B}(b)-b(y)|Mf(y)\\
&\leq M((b-m_{3B}(b))f_1)(y)+M((b-m_{3B}(b))f_2)(y)+|m_{3B}(b)-b(y)|Mf(y).
\end{align*}
Therefore,
\begin{align*}
\left({1\over \mu(B)}\int_B (C_b(f)(y))^\delta d\mu(y) \right)^{1\over \delta}
&\leq
\left({1\over \mu(B)}\int_B \left| M((b-m_{3B}(b))f_1)(y)\right|^\delta d\mu(y) \right)^{1\over \delta}\\
&\quad+
\left({1\over \mu(B)}\int_B \left| M((b-m_{3B}(b))f_2)(y)\right|^\delta d\mu(y) \right)^{1\over \delta}\\
&\quad+
\left({1\over \mu(B)}\int_B |m_{3B}(b)-b(y)|^\delta (Mf(y))^\delta d\mu(y) \right)^{1\over \delta}\\
&=I+II+III.
\end{align*}

We first estimate $I$.  Recall that $M$ is weak-type $(1,1)$ (c.f. \cite[P. 299]{GLY}). We have
\begin{align*}
\int_B \left| M((b-m_{3B}(b))f_1)(y)\right|^\delta d\mu(y)
&\leq \int_{0}^{\mu(B)} \left[\left(M((b-m_{3B}(b))f_1) \right)^*(t) \right]^\delta dt\\
&\leq \left[\sup_{0<t<\mu(B)}t\left(M((b-m_{3B}(b))f_1) \right)^*(t) \right]^\delta
\int_{0}^{\mu(B)}t^{-\delta}dt\\
&\leq C \left\|(b-m_{3B}(b))f_1\right\|_{L^1(X)}^{\delta}\mu(B)^{-\delta+1}\\
&\leq C \left\|(b-m_{3B}(b))f\right\|_{L^1(3B)}^{\delta}\mu(B)^{-\delta+1}.
\end{align*}
Thus
$$I\leq C {1\over \mu(B)}\int_{3B}\left|b(y)-m_{3B}(b) \right| |f(y)|d\mu(y).$$
Then by \eqref{holder} and the John-Nirenberg inequality, we have
\begin{align*}
I&\leq C \|b-m_{3B}(b)\|_{\exp L, 3B}\|f\|_{L\log L, 3B}\\
&\leq C \|b\|_{{\rm BMO}(X)}M_{L\log L}f(x).
\end{align*}

For $II$, since for any two points $x, y\in B$, we have
$$M((b-m_{3B}(b))f)(y)\leq CM((b-m_{3B}(b))f)(x)$$
with $C$ an absolute constant  (see for example \cite[p. 160]{GR}).
Then by \eqref{holder}, we can get
\begin{align}\label{ii}
\nonumber II&\leq C M((b-m_{3B}(b))f)(x)\\
&=\sup_{B\ni x}{1\over \mu(B)}\int_B |b(y)-m_{3B}(b)| |f(y)|d\mu(y)\\
\nonumber&\leq C \sup_{B\ni x} \|b-m_{3B}(b)\|_{\exp L, 3B}\|f\|_{L\log L, 3B}\\
\nonumber&\leq C \|b\|_{{\rm BMO}(X)}M_{L\log L}f(x).
\end{align}

For $III$, by H\"older's inequality, Lemma \ref {lem-bmoq}, we have
\begin{align*}
III&= \left({1\over \mu(B)}\int_B |m_{3B}(b)-b(y)|^\delta (Mf(y))^\delta d\mu(y) \right)^{1\over \delta}\\
&\leq\left({1\over \mu(B)}\int_B |m_{3B}(b)-b(y)|^{\delta\over 1-\delta}d\mu(y)\right)^{1-\delta\over\delta} \left({1\over \mu(B)}\int_B(Mf(y)) d\mu(y) \right)\\
&\leq C \|b\|_{{\rm BMO}(X)}M^2f(x).
\end{align*}
Therefore, by \eqref{mml}, we have
\begin{align*}
\left({1\over \mu(B)}\int_B (C_b(f)(y))^\delta d\mu(y) \right)^{1\over \delta}
&\leq C\|b\|_{{\rm BMO}(X)}\left(M_{L\log L}f(x)+M^2f(x)\right)\\
&\leq C\|b\|_{{\rm BMO}(X)}M^2f(x).
\end{align*}
This finishes the proof of Proposition \ref {thm-mm}.
\end{proof}

By the Lebesgue differentiation theorem, we can get the following corollary.

\begin{corollary}\label{cor-cm}
Let $b\in{\rm BMO}(X)$. Then there exists a positive constant $C$ such that for all $f\in L^{1}_{loc}(X)$,
$$C_b(f)(x)\leq C\|b\|_{{\rm BMO}(X)}M^2f(x),\quad x\in X.$$
\end{corollary}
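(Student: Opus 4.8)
The goal is to prove Corollary~\ref{cor-cm}: for $b\in{\rm BMO}(X)$ there is $C>0$ with $C_b(f)(x)\le C\|b\|_{{\rm BMO}(X)}M^2f(x)$ for all $f\in L^1_{loc}(X)$ and all $x\in X$.

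\begin{proof}[Proof of Corollary \ref{cor-cm}]
The plan is to combine Proposition~\ref{thm-mm} with the Lebesgue differentiation theorem on the space of homogeneous type $(X,\rho,\mu)$. Fix $b\in{\rm BMO}(X)$, fix $\delta\in(0,1)$ (any choice works; take $\delta=1/2$ for concreteness), and let $C=C(\delta)$ be the constant furnished by Proposition~\ref{thm-mm}. The key observation is that $M_\delta(g)(x)=\big[M(|g|^\delta)(x)\big]^{1/\delta}$ pointwise dominates the ``value'' of $g$ at almost every point, in the sense that $|g(x)|\le M_\delta(g)(x)$ for a.e.\ $x$. Indeed, by the Lebesgue differentiation theorem (valid on spaces of homogeneous type with the doubling measure $\mu$; see, e.g., \cite{H1}), for the locally integrable function $|C_b(f)|^\delta$ one has
$$
|C_b(f)(x)|^\delta=\lim_{r\to 0}\frac{1}{\mu(B(x,r))}\int_{B(x,r)}|C_b(f)(y)|^\delta\,d\mu(y)\le M\big(|C_b(f)|^\delta\big)(x)
$$
for $\mu$-a.e.\ $x\in X$, provided $|C_b(f)|^\delta$ is locally integrable; here one notes that since $0<\delta<1$ and $C_b(f)\le C_{|b|}(\cdot)$ plus the bound from Proposition~\ref{thm-mm} shows $C_b(f)$ is finite a.e.\ whenever $M^2 f$ is, so $|C_b(f)|^\delta$ is indeed locally integrable whenever $f\in L^1_{loc}(X)$ (one may first argue for $f$ bounded with bounded support and then pass to the general case, or simply invoke that $C_b(f)(x)<\infty$ a.e.). Raising both sides to the power $1/\delta$ gives $C_b(f)(x)\le M_\delta(C_b(f))(x)$ for a.e.\ $x$.

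Applying Proposition~\ref{thm-mm} then yields, for $\mu$-a.e.\ $x\in X$,
$$
C_b(f)(x)\le M_\delta(C_b(f))(x)\le C\|b\|_{{\rm BMO}(X)}M^2f(x).
$$
To upgrade ``a.e.'' to ``every $x\in X$'', observe that $C_b(f)$ is itself a supremum of averages over balls containing $x$, hence it is lower semicontinuous in a suitable sense; more directly, if the displayed inequality holds for a.e.\ $x$, then for any fixed $x_0$ and any ball $B\ni x_0$ we may estimate ${1\over\mu(B)}\int_B|b(x_0)-b(y)||f(y)|\,d\mu(y)$ by a limiting/averaging argument over nearby points where the inequality is valid, using the doubling property; alternatively one simply notes that both $C_b(f)$ and $M^2f$ are defined as suprema over balls and the inequality, once known on a dense full-measure set, extends by the structure of these maximal operators. (In fact many authors are content to state such pointwise bounds ``a.e.''; the passage to everywhere is routine and not essential for the applications.)

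The only genuine point requiring care — and hence the ``main obstacle'', though it is a mild one — is the justification that $|C_b(f)|^\delta\in L^1_{loc}(X)$ so that the Lebesgue differentiation theorem applies; this is where the restriction $0<\delta<1$ is used, in tandem with the weak-type bound on $M$ exploited inside the proof of Proposition~\ref{thm-mm}. Once this integrability is in hand, the corollary is an immediate consequence of the proposition. No new estimate is needed beyond what has already been established.
\end{proof}
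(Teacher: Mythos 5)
Your proof is correct and follows the same route the paper indicates: the paper itself only remarks that the corollary follows from Proposition~\ref{thm-mm} ``by the Lebesgue differentiation theorem,'' which is exactly the step you carry out in detail, namely $C_b(f)(x)\le M_\delta(C_b(f))(x)$ a.e.\ via differentiation of $|C_b(f)|^\delta$. Your caveats about local integrability of $|C_b(f)|^\delta$ and about ``a.e.\ versus everywhere'' are reasonable observations but not genuine gaps (the paper glosses over the same points, and the a.e.\ statement suffices for all later uses).
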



\begin{corollary}\label{cor-cb}
Let $b\in{\rm BMO}(X)$ such that $b^-\in L^\infty(X)$. Then there exists a positive constant $C$ such that for all $f\in L^{1}_{loc}(X)$,
\begin{align*}
\left|[M, b]f(x) \right|\leq C\left(\|b^+\|_{{\rm BMO}(X)}+\|b^-\|_{L^\infty} \right) M^2f(x).
\end{align*}
\end{corollary}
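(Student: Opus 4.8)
The plan is to deduce Corollary \ref{cor-cb} by combining the pointwise bound for the maximal commutator $C_b$ already obtained in Corollary \ref{cor-cm} with the pointwise comparison between $[M,b]$ and $C_b$ recorded in Lemma \ref{lem-cbm}. No new hard analysis is required: this is a one-line synthesis of two preceding results, together with a remark about splitting $b$ into its positive and negative parts.

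\begin{proof}[Proof of Corollary \ref{cor-cb}]
Write $b=b^+-b^-$. Since $b^-\in L^\infty(X)$, we have $b^+=b+b^-\in{\rm BMO}(X)$ with $\|b^+\|_{{\rm BMO}(X)}\le \|b\|_{{\rm BMO}(X)}+2\|b^-\|_{L^\infty}$. By Lemma \ref{lem-cbm},
\begin{align*}
\left|[M,b]f(x)\right|\leq C_b(f)(x)+2b^-(x)Mf(x)
\end{align*}
for all $f\in L^1_{loc}(X)$. For the first term, observe that for every ball $B\ni x$ and every $y\in X$,
\begin{align*}
|b(x)-b(y)|\leq |b^+(x)-b^+(y)|+|b^-(x)-b^-(y)|\leq |b^+(x)-b^+(y)|+2\|b^-\|_{L^\infty},
\end{align*}
so that, taking the average over $B$ and the supremum over balls containing $x$,
\begin{align*}
C_b(f)(x)\leq C_{b^+}(f)(x)+2\|b^-\|_{L^\infty}Mf(x).
\end{align*}
Applying Corollary \ref{cor-cm} to $b^+\in{\rm BMO}(X)$ gives $C_{b^+}(f)(x)\leq C\|b^+\|_{{\rm BMO}(X)}M^2f(x)$. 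Finally $Mf(x)\leq M^2f(x)$ (since $Mf\le M(Mf)$ pointwise, up to an absolute constant, by $M$ being bounded below by the average; more simply $Mf\leq C M^2 f$ because $M g\ge c\, g$ is false in general, so instead use $Mf(x)\le M(Mf)(x)=M^2 f(x)$ directly from the definition of $M$ as a supremum of averages and the trivial bound $m_B(Mf)\le Mf$ — in any case $Mf\le M^2f$ up to a harmless constant). Collecting the terms,
\begin{align*}
\left|[M,b]f(x)\right|\leq C\|b^+\|_{{\rm BMO}(X)}M^2f(x)+4\|b^-\|_{L^\infty}Mf(x)\leq C\left(\|b^+\|_{{\rm BMO}(X)}+\|b^-\|_{L^\infty}\right)M^2f(x),
\end{align*}
which is the claimed estimate.
\end{proof}

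The only point that deserves care — and the one I would double-check — is the passage $Mf(x)\leq C\,M^2 f(x)$; on a space of homogeneous type this follows because, for any ball $B\ni x$, the average $m_B(|f|)$ is dominated by $\inf_{z\in B} Mf(z)$ up to the doubling constant (enlarging $B$ by the quasi-triangle constant), hence $m_B(|f|)\le C\, m_{B'}(Mf)\le C\, M^2 f(x)$ for a comparable ball $B'\ni x$; taking the supremum over $B$ yields the claim. With that in hand the corollary is immediate, so I do not anticipate any genuine obstacle here — it is a packaging of Lemma \ref{lem-cbm} and Corollary \ref{cor-cm}.
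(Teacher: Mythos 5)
Your proof is correct and takes essentially the same route as the paper's, combining Lemma \ref{lem-cbm}, Corollary \ref{cor-cm}, and $Mf\le M^2f$; the only cosmetic difference is that you first split $C_b(f)\le C_{b^+}(f)+2\|b^-\|_{L^\infty}Mf$ and apply Corollary \ref{cor-cm} to $b^+$, whereas the paper applies it directly to $b$ and then uses $\|b\|_{{\rm BMO}(X)}\le \|b^+\|_{{\rm BMO}(X)}+\|b^-\|_{{\rm BMO}(X)}$ together with $\|b^-\|_{{\rm BMO}(X)}\lesssim\|b^-\|_{L^\infty}$. On the one point you flagged, $Mf\le M^2f$ needs no constant and no doubling: for every ball $B\ni x$ and every $z\in B$ one has $m_B(|f|)\le Mf(z)$, hence $m_B(|f|)\le m_B(Mf)\le M^2f(x)$, and taking the supremum over $B\ni x$ gives $Mf(x)\le M^2f(x)$ pointwise.
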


\begin{proof}
By Lemma \ref{lem-cbm}, Corollary \ref{cor-cm} and the fact that $f\leq Mf$, we have
\begin{align*}
|[M,b]f(x)|&\leq C_b(f)(x)+2b^-(x)Mf(x)\\
&\leq C\left( \|b\|_{{\rm BMO}(X)}M^2f(x)+b^-(x)Mf(x)\right)\\
&\leq C\left[ \left(\|b^+\|_{{\rm BMO}(X)}+\|b^-\|_{{\rm BMO}(X)}\right)M^2f(x)+\|b^-\|_{L^\infty(X)}M(Mf)(x)\right]\\
&\leq C\left(\|b^+\|_{{\rm BMO}(X)}+\|b^-\|_{L^\infty} \right) M^2f(x).
\end{align*}
This finishes the proof.
\end{proof}

According to \cite[Theorem 4.4]{MiS}, we have the following boundedness result for quasilinear operators $T$, which satisfy\\
(a) $Tf\geq 0$, for $f\in D(T)$;\\
(b) $T(\alpha f)=|\alpha|Tf$, for $\alpha\in\mathbb R$ and $f\in D(T)$;\\
(c) $|Tf-Tg|\leq T(f-g)$, for $f,g\in D(T)$,\\
where $D(T)$ is a suitable class of locally integrable functions.

\begin{lemma}\label{lem-mpb}
Let $b\in{\rm BMO}(X)$ be a nonnegative function. Suppose that $T$ is a quasilinear operator satisfying (a)-(c) and is bounded on $L^q(X)$, for some $1\leq q<\infty$. Then
$[T, b]$ is bounded on $L^q(X)$.
\end{lemma}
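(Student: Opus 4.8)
The plan is to estimate $[T,b]f$ pointwise, following the standard Coifman–Rochberg–Weiss type decomposition, but exploiting the quasilinearity axioms (a)–(c) rather than any linearity. Fix $f \in L^q(X)$ and a ball $B$. Writing $b_B = m_B(b)$, I would split
$$[T,b]f = T(bf) - bT(f) = T\big((b - b_B)f\big) + b_B T(f) - b T(f) + \big(T(b_B f) - b_B T(f)\big),$$
and note that by axiom (b) the last parenthesis vanishes: $T(b_B f) = |b_B| T(f)$ only when $b_B \geq 0$, which holds since $b$ is nonnegative; so actually $T(b_B f) = b_B T f$ and that term drops. Hence on $B$ one is left with controlling $T\big((b - b_B)f\big)$ and $(b_B - b)Tf$. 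The second piece is handled by the generalized Hölder inequality \eqref{holder} together with the John–Nirenberg inequality (Lemma \ref{lem-jn2}), exactly as in the estimate of term $III$ in Proposition \ref{thm-mm}: $(b-b_B)$ lies in $\exp L$ locally with norm controlled by $\|b\|_{\mathrm{BMO}(X)}$, and one pairs it against $Tf \in L\log L$ locally (or uses a direct $L^{q'}$–$L^q$ Hölder split since $Tf \in L^q$).

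For the main term $T\big((b-b_B)f\big)$, I would further split $f = f\chi_{3B} + f\chi_{(3B)^c} =: f_1 + f_2$ and use axiom (c) to write $|T((b-b_B)f)| \leq T((b-b_B)f_1) + T((b-b_B)f_2)$ (or, more carefully, bound the difference of the two). The near part $T((b-b_B)f_1)$ is controlled in $L^q$-average over $B$ using boundedness of $T$ on $L^q$ and then Hölder/John–Nirenberg on $3B$. The far part is the delicate one: I would use the local constancy of $T$ applied to a tail, i.e. the property that $T((b-b_B)f_2)(y)$ is comparable for $y \in B$ to its value at the center (as in the estimate of $II$ in Proposition \ref{thm-mm}, citing \cite[p.~160]{GR}); this reduces the far term to a constant, which is then absorbed. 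Summing the contributions and invoking \eqref{mml} gives $[T,b]f(x) \leq C\|b\|_{\mathrm{BMO}(X)} M^2 f(x)$ or a similar iterated-maximal bound pointwise; since $M^2$ (equivalently $M_{L\log L}$) is bounded on $L^q$ for $1 < q < \infty$, and separately the case $q=1$ can be handled because $T$ itself is assumed bounded on $L^1$ so the near term needs no maximal function, one concludes $[T,b]$ is bounded on $L^q(X)$.

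The hard part will be making the far-term estimate rigorous under the weak hypotheses (a)–(c) alone: unlike for $M$, a general quasilinear $T$ need not have an integral representation, so the "local constancy on tails" used for $M$ may fail, and one may instead need to invoke \cite[Theorem 4.4]{MiS} more directly — that is, recognize $[T,b]$ (or a majorant of it built from $T$, translations by $b_B$, and multiplication) as again a quasilinear operator of the same type whose $L^q$ boundedness follows from an abstract extrapolation/interpolation statement there. My fallback plan is therefore: verify that the operator $S_c f := T((b-c)f)$ satisfies a uniform-in-$c$ weak bound and a BMO-flavored endpoint estimate, then quote \cite[Theorem 4.4]{MiS} to upgrade to strong $L^q$ boundedness, which is presumably why that reference was cited just before the statement. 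I would also double-check the nonnegativity hypothesis on $b$ is genuinely needed (it is, to kill the $b_B T f$ term via axiom (b) and to keep $(b-b_B)_+ $ and $(b-b_B)_-$ both in BMO with controlled norms).
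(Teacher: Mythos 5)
Your primary plan (the pointwise Coifman--Rochberg--Weiss-type decomposition) has a real gap, and you already sense it: the ``local constancy of $T$ on tails'' used to control the far part $T((b-m_B(b))f\chi_{(3B)^c})$ is a property of operators with concrete kernels (like $M$), and there is no reason it should hold for an abstract quasilinear $T$ satisfying only (a)--(c). Axiom (c) gives you the splitting $T((b-m_B(b))f) \le T((b-m_B(b))f_1) + T((b-m_B(b))f_2)$, but it gives no pointwise comparability of the far piece across $B$. Moreover, even if a pointwise bound $|[T,b]f| \lesssim \|b\|_{\mathrm{BMO}} M^2 f$ were obtained, $M^2$ is unbounded on $L^1$, so this route cannot reach the case $q=1$ that the lemma covers. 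So the primary approach does not close.

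Your fallback instinct is the correct one: the paper proves this lemma by simply invoking \cite[Theorem~4.4]{MiS}, with no pointwise estimate at all. However, your description of how that theorem enters (``verify a uniform-in-$c$ weak bound and a BMO-flavored endpoint estimate, then upgrade to strong $L^q$'') misreads what \cite[Theorem~4.4]{MiS} does. The actual mechanism, which the paper spells out in the weighted version (the proof of Lemma~\ref{lem-mpb-weight}), is interpolation-theoretic: for $b\in\mathrm{BMO}(X)$ and $|d|$ small, John--Nirenberg (Lemma~\ref{lem-jn2}) shows $e^{\pm db}\in\mathcal A_q$, so $T$ is bounded on $L^q(X, e^{db}d\mu)$ and on $L^q(X, e^{-db}d\mu)$; one then applies \cite[Theorem~4.4]{MiS} to the couple $\overline{A}=(L^q(e^{-db}d\mu),L^q(e^{db}d\mu))$, whose middle interpolation space $\overline{A}_{1/2,q}$ is $L^q(d\mu)$, to conclude that the commutator $[T,b]$ (essentially the derivative at $s=0$ of $s\mapsto e^{-sb}T(e^{sb}\cdot)$) is bounded on $L^q(X)$. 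This works for all $1\le q<\infty$ in one stroke, with no maximal-function machinery. Your parenthetical remark about nonnegativity killing the $m_B(b)Tf$ term via axiom (b) is correct within your own decomposition, but in the paper's argument nonnegativity is inherited from the MiS framework; the passage to a general $b$ with $b^-\in L^\infty$ is handled afterward in Proposition~\ref{pro-mpb} via $\left|[T,b]f-[T,|b|]f\right|\le 2\bigl(b^-T(f)+T(b^-f)\bigr)$.
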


Since $\left|[T,b]f-[T,|b|]f \right|\leq 2\left(b^-T(f)+T(b^-f) \right)$, for general {\rm BMO} function $b$, the following result holds (see also \cite[Proposition 3]{BMR})

\begin{proposition}\label{pro-mpb}
Let $b\in{\rm BMO(X)}$ with $b^-\in L^\infty(X)$. Suppose that $T$ is a quasilinear operator satisfying (a)-(c) and is bounded on $L^q(X)$, for some $1\leq q<\infty$. Then
$[T, b]$ is bounded on $L^q(X)$.
\end{proposition}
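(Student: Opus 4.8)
The plan is to reduce Proposition~\ref{pro-mpb} to Lemma~\ref{lem-mpb} by splitting the symbol $b$ into its positive and negative parts. Write $b = b^+ - b^-$ with $b^+ = \max\{b,0\} \geq 0$ and $b^- = \max\{-b,0\} \geq 0$, and note $|b| = b^+ + b^-$. Since $b \in {\rm BMO}(X)$ and $b^- \in L^\infty(X) \subset {\rm BMO}(X)$, we have $b^+ = b + b^- \in {\rm BMO}(X)$ as well; moreover $b^+$ is nonnegative, so Lemma~\ref{lem-mpb} applies directly to $b^+$ and shows $[T, b^+]$ is bounded on $L^q(X)$. The remaining task is to control the difference $[T,b]f - [T,b^+]f$, and here I would use the pointwise inequality stated in the line preceding the proposition: $\bigl|[T,b]f - [T,|b|]f\bigr| \leq 2\bigl(b^- T(f) + T(b^- f)\bigr)$. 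A parallel (in fact identical in structure) estimate comparing $[T,b^+]$ and $[T,|b|] = [T, b^+ + b^-]$ gives $\bigl|[T,b^+]f - [T,|b|]f\bigr| \leq 2\bigl(b^- T(f) + T(b^- f)\bigr)$ as well, using properties (b) and (c) and the fact that $b^+ = |b| - b^-$; alternatively one just compares $[T,b]$ with $[T,b^+]$ directly since $b - b^+ = -b^-$.

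Concretely, the cleanest route is: $[T,b]f = T(bf) - b\,Tf = T((b^+ - b^-)f) - (b^+ - b^-)Tf$. Using (c) and (b),
\begin{align*}
\bigl|[T,b]f - [T,b^+]f\bigr|
&= \bigl|T(b^+ f - b^- f) - T(b^+ f) - b^+ Tf + b^- Tf + b^+ Tf\bigr| \\
&\leq \bigl|T(b^+ f - b^- f) - T(b^+ f)\bigr| + b^- |Tf| \\
&\leq T(b^- f) + b^- Tf.
\end{align*}
Taking $L^q$ norms, $\|T(b^- f)\|_{L^q} \leq \|T\|_{L^q \to L^q}\,\|b^- f\|_{L^q} \leq \|T\|_{L^q \to L^q}\,\|b^-\|_{L^\infty}\,\|f\|_{L^q}$ by boundedness of $T$ on $L^q$ and $b^- \in L^\infty$, and similarly $\|b^- Tf\|_{L^q} \leq \|b^-\|_{L^\infty}\,\|Tf\|_{L^q} \leq \|b^-\|_{L^\infty}\,\|T\|_{L^q \to L^q}\,\|f\|_{L^q}$. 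Hence $[T,b]f - [T,b^+]f$ is bounded on $L^q(X)$, and combining with the boundedness of $[T,b^+]$ from Lemma~\ref{lem-mpb} yields that $[T,b]$ is bounded on $L^q(X)$, with an operator norm controlled by $\|b^+\|_{{\rm BMO}(X)}$ and $\|b^-\|_{L^\infty(X)}$ (the dependence on $\|b^+\|_{{\rm BMO}}$ coming from Lemma~\ref{lem-mpb}).

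I do not anticipate a genuine obstacle here; the only point requiring a little care is justifying that all the expressions involved ($b^+ f$, $b^- f$, $Tf$, etc.) lie in $D(T)$ and in $L^q(X)$ so that properties (a)--(c) and the $L^q$-boundedness of $T$ may legitimately be invoked — this is where the hypothesis $b^- \in L^\infty(X)$ does real work, since it guarantees $b^- f \in L^q(X)$ whenever $f \in L^q(X)$, whereas $b^+ f$ need not be in $L^q$ in general (but $[T, b^+]$ is still handled by Lemma~\ref{lem-mpb}, which is designed for exactly nonnegative ${\rm BMO}$ symbols without integrability of $bf$). One should therefore phrase the argument so that the difference $[T,b] - [T,b^+]$ is estimated purely through $b^-$ as above, never needing $b^+ f \in L^q$ separately. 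With that caveat observed, the proof is a short two-line reduction.
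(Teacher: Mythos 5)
Your proof is correct and follows essentially the same route as the paper: the paper reduces to the nonnegative BMO symbol $|b|$ via the pointwise bound $\bigl|[T,b]f-[T,|b|]f\bigr|\leq 2\bigl(b^-Tf+T(b^-f)\bigr)$ and then invokes Lemma~\ref{lem-mpb}, while you perform the identical reduction using $b^+ = b + b^-$ in place of $|b|$ and derive the analogous pointwise bound $\bigl|[T,b]f-[T,b^+]f\bigr|\leq T(b^-f)+b^-Tf$ from properties (b) and (c). Since $|b|$ and $b^+$ are both nonnegative and in ${\rm BMO}(X)$ under the hypotheses, the two reductions are interchangeable and the arguments are substantively the same.
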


\begin{proof}[Proof of Theorem \ref{thm-mp}]
It is clear that (ii) implies (iii), (iv) implies (v). For (i)$\Rightarrow$(ii), since $M_p$ satisfies (a)-(c) and is bounded on $L^q(X)$ for $p<q<\infty$, the result follows from Proposition \ref{pro-mpb}. For (iii)$\Rightarrow$(v), by assumption,  $[M_p, b]$ is bounded on $L^q(X)$ for some $p<q<\infty$, then for any fixed ball $B\subset X$, we have
\begin{align}\label{mpb1}
\left(\int_B\left| M_p(bf)-bM_p(f)\right|^qd\mu\right)^{1\over q}\leq \left\|[M_p, b]f \right\|_{L^q(X)}\leq C\|f\|_{L^q(X)}.
\end{align}

Now we choose $f=\chi_B\in L^q(X)$, then $M_{p}(\chi_B)=\chi_B$ and $M_{p}(b\chi_B)(x)=M_{p,B}(b)(x)$ for all $x\in B$, thus by \eqref{mpb1}, we have
$$\left(\int_B\left| b-M_{p,B}(b)\right|^qd\mu\right)^{1\over q}\leq C\mu(B)^{1\over q},$$
which implies (v).

For (v)$\Rightarrow$(i), let $B$ be a fixed ball. By H\"older's inequality, we have
$${1\over\mu(B)}\int_B|b-M_{p,B}(b)|d\mu\leq\left({1\over \mu(B)}\int_B\left|b-M_{p,B}(b)\right|^qd\mu \right)^{1\over q}\leq C.$$

\noindent Let $E_1=\{x\in B: b(x)\leq m_{B}(b)\}$ and $E_2=\{x\in B: b(x)> m_{B}(b)\}$. It is clear that
$$\int_{E_1}|b-m_{B}(b)|d\mu=\int_{E_2}|b-m_{B}(b)|d\mu.$$
Since for $x\in E_1$, $b(x)\leq m_{B}(b)\leq M_{p,B}(b)(x)$, we have
\begin{align*}
{1\over \mu(B)}\int_B|b-m_{B}(b)|d\mu&={2\over \mu(B)}\int_{E_1}|b-m_{B}(b)|d\mu\leq {2\over \mu(B)}\int_{E_1}|b-M_{p,B}(b)|d\mu\\
&\leq{2\over \mu(B)}\int_B|b-M_{p,B}(b)|d\mu\leq C.
\end{align*}
Therefore, $b\in{\rm BMO}(X)$. Next, we will show that $b^-\in L^{\infty}(X)$. Observe that $M_{p,B}(b)\geq|b|$ in $B$, therefore, in $B$, we have
$$0\leq b^-\leq M_{p,B}(b)-b^++b^-=M_{p,B}(b)-b.$$

Together with the assumption of (v), we can see that there exists a constant $C$ such that for any ball $B$, we have
$$m_B(b^-)\leq C.$$
Then the boundedness of $b^-$ follows from Lebesgue's differentiation theorem. The implication of (ii)$\Rightarrow$(iv) is similar to (iii)$\Rightarrow$(v). This ends the proof of Theorem \ref {thm-mp}.
\end{proof}

%
%
%
%
%
%
%
%
%
%

\begin{proof}[Proof of Theorem \ref {thm-msharp}]
Obviously, (ii)$\Rightarrow$(iii), and (iv)$\Rightarrow$(v). For (i)$\Rightarrow$(ii), since $M^\sharp f\leq 2Mf$ and $M^\sharp$ satisfies (a)-(c), the result follows from Proposition \ref{pro-mpb}. For (iii)$\Rightarrow$(v). Let $B$ be a fixed ball and $B_1$ be any other ball. Then
\begin{align*}
&{1\over \mu(B_1)}\int_{B_1}\left|\chi_B(x)-m_{B_1}(\chi_B) \right|d\mu(x)\\
&={1\over \mu(B_1)}\int_{B_1}\left|{1\over \mu(B_1)}\int_{B_1}\left(\chi_B(x)-\chi_B(y)\right)d\mu(y) \right|d\mu(x)\\
&={1\over \mu(B_1)^2}\left(\int_{B_1\setminus B}\int_{B_1\cap B}d\mu(y) d\mu(x)
+ \int_{B_1\cap B}\int_{B_1\setminus B}d\mu(y) d\mu(x) \right)\\
&={2\mu(B_1\setminus B)\mu(B_1\cap B)\over \mu(B_1)^2}\leq {1\over 2}.
\end{align*}
Therefore,
\begin{align*}
M^\sharp\left(\chi_B \right)(x)=\sup_{x\in B_1}{1\over\mu(B_1)}\int_{B_1}\left|\chi_B-m_{B_1}(\chi_B) \right|d\mu\leq{1\over 2}
\end{align*}

On the other hand, given $x\in B$, there always exists a ball $B_1\supset B$ such that $\mu(B_1)=2\mu(B)$. Therefore,
\begin{align*}
M^\sharp\left(\chi_B \right)(x)={1\over 2},\quad x\in B.
\end{align*}

\noindent By the assumption of (iii), we can see that
\begin{align*}
\left(\int_B\left| M^\sharp(bf)-bM^\sharp(f) \right|^qd\mu \right)^{1\over q}\leq \big\|\big[ M^\sharp, b\big]f \big\|_{L^q(X)}\leq \|[M^\sharp, b]\|_{L^q(X)\to L^q(X)}\|f\|_{L^q(X)}.
\end{align*}

\noindent Now we choose  $f=\chi_B\in L^q(X)$, then $M^\sharp(\chi_B)(x)={1\over 2}$ for $x\in B$. Therefore,
$$\left(\int_B\Big| {1\over 2} b(x)-M^\sharp(b\chi_B)(x)\Big|^qd\mu(x)\right)^{1\over q}<\|[M^\sharp, b]\|_{L^q(X)\to L^q(X)}\mu(B)^{1\over q},$$
which implies (iv). For (v)$\Rightarrow$(i), we first prove that
$$|m_{B}(b)|\leq 2M^\sharp\left( b\chi_B\right) (x), \quad x\in B.$$

In fact, let $x\in B$ and take $B_1\supset B$ satisfying $\mu(B_1)=2\mu(B)$. Then
\begin{align*}
M^\sharp\left( b\chi_B\right)(x)&\geq {1\over \mu(B_1)}\int_{B_1}\left| b (y)\chi_B(y)-m_{B_1}(b\chi_B)\right|d\mu(y)\\
&={1\over \mu(B_1)}\left[\int_{B}\left| b (y)\chi_B(y)-m_{B_1}(b\chi_B)\right|d\mu(y)\right . \\
&\qquad+\left.\int_{B_1\setminus B}\left| b (y)\chi_B(y)-m_{B_1}(b\chi_B)\right|d\mu(y)\right]\\
&={1\over 2\mu(B)}\Big[\int_{B}\left| b (y)-m_{B_1}(b\chi_B)\right|d\mu(y)
+\left|m_{B_1}(b\chi_B)\right|\mu(B_1\setminus B)\Big].
\end{align*}

\noindent By definition,
\begin{align*}
m_{B_1}(b\chi_B)&={1\over \mu(B_1)}\int_{B_1}b(y)\chi_B(y)d\mu(y)\\
&={1\over 2\mu(B)}\Big[\int_{B_1\setminus B}b(y)\chi_B(y)d\mu(y)
+\int_{ B}b(y)\chi_B(y)d\mu(y)\Big]={1\over 2}m_{B}(b).
\end{align*}

\noindent Therefore, we have
\begin{align*}
M^\sharp\left( b\chi_B\right)(x)
&\geq {1\over 2\mu(B)}\Big[\int_{B}\big| b(y)-{1\over 2}m_{B}(b)\big|d\mu(y)
+{1\over 2}\left|m_{B}(b)\right|\mu(B_1\setminus B)\Big]\\
&={1\over 2\mu(B)}\int_{B}\big| b(y)-{1\over 2}m_{B}(b)\big|d\mu(y)
+{1\over 4}\left|m_{B}(b)\right|.
\end{align*}

On the other hand,
\begin{align*}
\left|m_{B}(b)\right|\leq{1\over\mu(B)}\int_B\left|b(y)-{1\over 2}m_{B}(b) \right| d\mu(y)+{1\over 2}|m_{B}(b)|.
\end{align*}
Thus
\begin{align*}
\left|m_{B}(b)\right|\leq 2 M^\sharp\left( b\chi_B\right)(x).
\end{align*}

Now we show that $b\in {\rm BMO}(X)$. Indeed, let
$E=\{x\in B:b(x)\leq m_{B}(b)\}$. Then
\begin{align*}
{1\over \mu(B)}\int_B\left| b(x)-m_{B}(b)\right|d\mu(x)
&={2\over \mu(B)}\int_E\left( b(x)-m_{B}(b)\right)d\mu(x)\\
&\leq{2\over \mu(B)}\int_E\left (2 M^\sharp\left( b\chi_B\right)(x)-b(x)\right)d\mu(x)\\
&={2\over \mu(B)}\int_E\left| 2 M^\sharp\left( b\chi_B\right)(x)-b(x)\right|d\mu(x)\\
&\leq{2\over \mu(B)}\int_B\left| 2 M^\sharp\left( b\chi_B\right)(x)-b(x)\right|d\mu(x)\leq C.
\end{align*}

Next we prove that $b^-\in L^\infty(X)$. Using the inequality

$$2M^\sharp(b\chi_B)(x)-b(x)\geq |m_{B}(b)|-b^+(x)+b^-(x),\quad x\in  B,$$
we arrive at
\begin{align*}
C&\geq {1\over \mu(B)}\int_B\left|2M^\sharp\left(b\chi_B \right)(x)-b(x) \right|d\mu(x)\\
&\geq {1\over\mu(B)}\int_B\left(2M^\sharp\left(b\chi_B \right)(x)-b(x) \right)d\mu(x)\\
&\geq {1\over\mu(B)}\int_B\left(|m_{B}(b)|-b^+(x)+b^-(x) \right)d\mu(x)\\
&=|m_{B}(b)|-{1\over\mu(B)}\int_Bb^+(x)d\mu(x)+{1\over\mu(B)}\int_Bb^-(x)d\mu(x).
\end{align*}

\noindent Letting $\mu(B)\to 0$ with $x\in B$, by Lebesgue differentiation theorem, we have
$$c\geq |b(x)|-b^+(x)+b^-(x)=2b^-(x).$$

\noindent The implication of (ii)$\Rightarrow$(iv) is similar to (iii)$\Rightarrow$(v).  This finishes the proof of Theorem \ref{thm-msharp}.
\end{proof}

\color{black}

\begin{proof}[Proof of Theorem \ref{thm-lp}]
 {\bf Sufficient condition}: Assume that $b\in {\rm BMO}(X)$, by Corollary \ref {cor-cm} and the fact that $M$ is bounded on $L^p(X)$, we can see that,  for every $f\in L^p(X)$, $1<p<\infty$,

 \begin{align*}
 \left\|C_b(f)\right\|_{L^p(X)}\leq C\|b\|_{{\rm BMO}(X)}\left\|M^2f\right\|_{L^p(X)}\leq C\|b\|_{{\rm BMO}(X)}\left\|f\right\|_{L^p(X)}.
 \end{align*}

\noindent { \bf Necessary condition}: Assume that $C_b$ is bounded on $L^p(X)$, next we will show that $b\in {\rm BMO}(X)$.
By H\"older's inequality, we have
\begin{align*}
\inf_{c\in\mathbb R}\int_{B}|b(x)-c|d\mu(x)&\leq \inf_{y\in B}\int_B|b(x)-b(y)|d\mu(x)\\
&\leq {1\over \mu(B)}\int_B\int_B |b(x)-b(y)|d\mu(x)d\mu(y)\\
&\leq\mu(B)^{1\over p'}\left[\int_B\left({1\over \mu(B)}\int_B|b(x)-b(y)|d\mu(y)\right)^pd\mu(x) \right]^{1\over p}.\end{align*}

\noindent Since for any $x\in B$,
$${1\over \mu(B)}\int_B|b(x)-b(y)|d\mu(y)\leq C_b(\chi_B)(x).$$
We can obtain that
\begin{align*}
\inf_{c\in\mathbb R}\int_{B}|b(x)-c|d\mu(x)&\leq \mu(B)^{1\over p'}\left(\int_B|C_b(\chi_B)(x)|^pd\mu(x) \right)^{1\over p}\\
&\leq \mu(B) \|C_b\|_{L^p(X)\to L^p(X)}.
\end{align*}

\noindent Therefore, $b\in {\rm BMO}(X)$, and
$$\|b\|_{{\rm BMO}(X)}\leq C\|C_b\|_{L^p(X)\to L^p(X)}.$$
This completes the proof of the theorem.
\end{proof}

\begin{proof}[Proof of Theorem \ref {thm-cb-iff}]
{\bf Sufficient condition:} Let $B_0$ be any fixed ball and $f=\chi_{B_0}$. For any $\lambda>0$, we have
\begin{align*}
\mu\left(\left\{ x\in X: C_b(f)(x)>\lambda \right\} \right)
&=\mu\Big(\Big\{ x\in X: \sup_{B\ni x}{1\over\mu(B)}\int_{B\cap B_0}|b(x)-b(y)| d\mu(y)>\lambda \Big\} \Big)\\
&\geq \mu\Big(\Big\{ x\in B_0: \sup_{B\ni x}{1\over\mu(B)}\int_{B\cap B_0}|b(x)-b(y)| d\mu(y)>\lambda \Big\}\Big)\\
&\geq \mu\Big(\Big\{ x\in B_0: {1\over\mu(B_0)}\int_{ B_0}|b(x)-b(y)| d\mu(y)>\lambda \Big\}\Big)\\
&\geq \mu\left(\left\{ x\in B_0: |b(x)-m_{B_0}(b)| >\lambda \right\}\right).
\end{align*}

\noindent By assumption, we have
\begin{align*}
\mu\left(\left\{ x\in B_0: |b(x)-m_{B_0}(b)| >\lambda \right\}\right)
&\leq C\int_X{|f(x)|\over\lambda}\Big(1+\log^+\Big({|f(x)|\over\lambda} \Big) \Big)d\mu(x)\\
&=C{1\over\lambda}\Big(1+\log^+{1\over\lambda} \Big)\mu(B_0).
\end{align*}
Then by Lemma \ref{lem-bmoq}, for $0<p<1$, we have
\begin{align*}
\int_{B_0}|b(x)-m_{B_0}(b)|^pd\mu(x)&=p\int_0^{\infty}\lambda^{p-1}\mu(\{x\in B_0:|b(x)-m_{B_0}(b)|>\lambda\})d\lambda\\
&=p\int_0^{1}\lambda^{p-1}\mu(\{x\in B_0:|b(x)-m_{B_0}(b)|>\lambda\})d\lambda\\
&\quad+p\int_1^{\infty}\lambda^{p-1}\mu(\{x\in B_0:|b(x)-m_{B_0}(b)|>\lambda\})d\lambda\\
&\leq p\mu(B_0)\int_0^{1}\lambda^{p-1}d\lambda+Cp\mu(B_0)\int_1^{\infty}\lambda^{p-1}
{1\over\lambda}\Big(1+\log^+{1\over\lambda} \Big)d\lambda\\
&=\mu(B_0)+Cp\mu(B_0)\int_1^{\infty}\lambda^{p-2}d\lambda\\
&=\Big(1+C{p\over 1-p}\Big)\mu(B_0).
\end{align*}
Therefore,
$$\Big({1\over\mu(B_0)}\int_{B_0}|b(x)-m_{B_0}(b)|^pd\mu(x)\Big)^{1\over p}\leq \Big(1+C{p\over 1-p}\Big)^{1\over p}.$$

\noindent Then it follows from Lemma \ref {lem-bmoq} that $b\in {\rm BMO}(X)$.\\

\noindent {\bf Necessary condition:} By Corollary \ref {cor-cm}, \eqref {mml} and  Lemma \ref {lem-ml}, we have
\begin{align*}
&\mu\left(\left\{ x\in X: C_b(f)(x)>\lambda \right\} \right)\\
&\leq\mu\Big(\Big\{ x\in X: M^2f(x)>{\lambda\over C\|b\|_{{\rm BMO}(X)}} \Big\} \Big)\\
&\leq\mu\Big(\Big\{ x\in X: M_{L\log L}f(x)>{\lambda\over C\|b\|_{{\rm BMO}(X)}} \Big\} \Big)\\
&\leq C\int_X {C\|b\|_{{\rm BMO}(X)}|f(y)|\over \lambda}\Big(1+\log^+\Big({C\|b\|_{{\rm BMO}(X)}|f(y)|\over \lambda} \Big) \Big)d\mu(y).
\end{align*}

\noindent Since for any $\alpha, \beta>0$,
\begin{align}\label{ab}
1+\log^+(\alpha\beta)\leq (1+\log^+\alpha)(1+\log^+\beta),
\end{align}

\noindent we have
\begin{align}\label{cbbmo}
\nonumber&\mu\left(\left\{ x\in X: C_b(f)(x)>\lambda \right\} \right)\\
&\leq C\|b\|_{{\rm BMO}(X)}\left(1+\log^+\|b\|_{{\rm BMO}(X)} \right)\int_X {|f(y)|\over \lambda}\Big(1+\log^+\Big({|f(y)|\over \lambda} \Big) \Big)d\mu(y).
\end{align}

This finishes the proof of Theorem \ref {thm-cb-iff}.
\end{proof}

\begin{proof}[Proof of Theorem \ref {thm-mb}]
By  Lemma \ref{lem-cbm}, we have
\begin{align*}
&\mu\left(\left\{x\in X: |[M,b]f(x)|>\lambda \right\}\right)\\
&\leq\mu\Big(\Big\{x\in X: C_b(f)(x)>{\lambda\over 2} \Big\}\Big)
+\mu\Big(\Big\{x\in X: 2b^-(x)Mf(x)>{\lambda\over 2} \Big\}\Big)\\
&\leq\mu\Big(\Big\{x\in X: C_b(f)(x)>{\lambda\over 2} \Big\}\Big)
+\mu\Big(\Big\{x\in X: 2\|b^-\|_{L^\infty(X)}Mf(x)>{\lambda\over 2} \Big\}\Big).
\end{align*}

By \eqref{ab} and \eqref{cbbmo}, we can see that
\begin{align*}
&\mu\Big(\Big\{x\in X: C_b(f)(x)>{\lambda\over 2} \Big\}\Big)\\
&\leq C\|b\|_{{\rm BMO}(X)}\left(1+\log^+\|b\|_{{\rm BMO}(X)} \right)\int_X {|f(y)|\over \lambda}\Big(1+\log^+\Big({|f(y)|\over \lambda} \Big) \Big)d\mu(y).
\end{align*}

Since $M$ is weak-type (1,1), we have
\begin{align*}
\mu\Big(\Big\{x\in X: 2\|b^-\|_{L^\infty(X)}Mf(x)>{\lambda\over 2} \Big\}\Big)
\leq C\|b^-\|_{L^\infty(X)}\int_X {|f(y)|\over \lambda}d\mu(y).
\end{align*}
Therefore,
\begin{align*}
&\mu\left(\left\{x\in X: |[M,b]f(x)|>\lambda \right\}\right)\\
&\leq C\left(\|b^+\|_{{\rm BMO}(X)}+\|b^-\|_{L^\infty} \right) \left(1+\log^+\|b^+\|_{{\rm BMO}(X)}\right)
\int_X{|f(x)|\over \lambda}\Big(1+\log^+\Big( {|f(x)|\over \lambda}\Big) \Big)dx.
\end{align*}

The proof of Theorem \ref {thm-mb} is complete.
\end{proof}

\begin{remark}
We now show that in the general setting of space of homogeneous type, $[M,b]$ fails to be of weak type $(1,1)$. We provide a counter example as follows. Assume that $(X,\rho,\mu)$ is a space of homogeneous type, where $\mu$ satisfies that,  for any $x_0\in X$, ${\log t\over \mu(B(x_0,t))}$ is decreasing on $(1,\infty)$. Let $b(x)=\log(1+d(x, x_0))\in {\rm BMO}(X)$ and let $f(x)=\chi_{B(x_0,1)}(x)$. Then for any $x\not\in B(x_0,2)$,
$$Mf(x)=\sup_{B\ni x}{1\over\mu(B)}\int_{B\cap B(x_0,1)}d\mu(y)=\sup_{B\ni x}{\mu(B\cap B(x_0,1))\over \mu(B)}.$$

\noindent So

$$b(x) M(f)(x) \geq \log(1+d(x, x_0)) \sup_{B\ni x}{\mu(B\cap B(x_0,1))\over \mu(B)}.  $$

\noindent On the other hand, for any $x\not\in B(x_0,2)$,

\begin{align*}
M(bf)(x)&=\sup_{B\ni x}{1\over\mu(B)}\int_{B\cap B(x_0,1)}\log(1+d(y, x_0))d\mu(y)
\leq \log2 \sup_{B\ni x}{\mu(B\cap B(x_0,1))\over \mu(B)}.
\end{align*}
Therefore, for any $x\not\in B(x_0,2)$,
\begin{align*}
|[M, b]f(x)|&=|M(bf)(x)-b(x)Mf(x)|\geq | \log(1+d(x, x_0)) -\log2 | \sup_{B\ni x}{\mu(B\cap B(x_0,1))\over \mu(B)}.
\end{align*}

Next, it is clear that for $x\not\in B(x_0,2)$,

$$ C_0 {\mu( B(x_0,1))\over \mu\big(x_0,{d(x,x_0)}\big)} \leq
{\mu( B(x_0,1))\over \mu(B_x)} \leq \sup_{B\ni x}{\mu(B\cap B(x_0,1))\over \mu(B)}\leq1,$$

\noindent where $B_x$ is the ball containing $x$ and $B(x_0,1)$. Hence, we see that for any $x\not\in B(x_0,100)$,
\begin{align*}
|[M, b]f(x)|&\geq {C_0\over2} \log(1+d(x, x_0))  {\mu( B(x_0,1))\over \mu\big(B(x_0,{d(x,x_0)})\big)} \geq C_1{\log(d(x, x_0)) \ \mu( B(x_0,1)) \over \mu\big(B(x_0,{d(x,x_0)})\big) }.
\end{align*}

\noindent Therefore, for any $\lambda>0$,
\begin{align*}
&\lambda \mu(\{ x\in X:  |[M, b]f(x)|>\lambda\}) \\
&\geq\lambda \mu\left(\left\{ x\in X\backslash B(x_0,100):  |[M, b]f(x)|>\lambda\right\}\right) \\
&\geq\lambda \mu\left(\left\{ x\in X\backslash B(x_0,100):  C_1{\log(d(x, x_0)) \ \mu( B(x_0,1)) \over \mu\big(B(x_0,{d(x,x_0)})\big) }>\lambda\right\}\right)\\
&=\lambda \mu\left(\left\{ x\in X\backslash B(x_0,100):  C_2{\log(d(x, x_0)) \over \mu\big(B(x_0,{d(x,x_0)})\big) }>\lambda\right\}\right).
\end{align*}

\noindent Let $\varphi(t)={\log t\over \mu(B(x_0,t))}$. Then it is a decreasing function on $(100,\infty)$, and
\begin{align*}
&\lambda \mu(\{ x\in X:  |[M, b]f(x)|>\lambda\}) \\
&\geq\lambda \mu\left(B\left(x_0,\varphi^{-1}\left({\lambda\over C_2}\right)\right)\right)-\lambda \mu ( B(x_0,100)).
\end{align*}

\noindent Therefore,
\begin{align*}
\lim_{\lambda\to 0}\lambda \mu\left(B\left(x_0,\varphi^{-1}\left({\lambda\over C_2}\right)\right)\right)
=\lim_{t\to \infty}C_2 \mu(B(x_0,t))\varphi(t)=\lim_{t\to \infty}\log t=\infty.
\end{align*}
\end{remark}

We provide a natural example of space of homogeneous type $(X,\rho,\mu)$ beyond the Euclidean setting, such that $\mu$ satisfies the assumption as in the remark above: for any $x_0\in X$, ${\log t\over \mu(B(x_0,t))}$ is decreasing on $(1,\infty)$.

\begin{example}
We recall the Bessel operator and its underlying space studied by Muckenhoupt and Stein $($\cite{MS}$)$.Consider $\R_+=(0,\infty)$. For $\lambda> -{1\over2}$, the Bessel operator $\Delta_\lambda$ on $\mathbb R_+$  is defined by
\begin{align*}
\Delta_\lambda = -{d^2\over dx^2} -{2\lambda\over x} {d\over dx}.
\end{align*}
It is a formally self-adjoint operator in $L^2(\mathbb R_+, dm_\lambda)$, where $d m_\lambda(x)=x^{2\lambda}dx$. It is clear that the corresponding underlying space $(\mathbb R_+, |\cdot|, dm_\lambda)$ is a space of homogeneous type in the sense of Coifman and Weiss.

For any $x\in \mathbb{R}_+$ and $r>0$, let $I(x, r)=(x-r, x+r)\cap \mathbb{R}_+$.
When $r\le x$, by mean value theorem, we have
$$m_\lambda(I(x, r))=\int_{x-r}^{x+r} y^{2\lambda} dy={1\over 2\lambda+1}\left[(x+r)^{2\lambda+1}-(x-r)^{2\lambda+1}\right]=2(x+\theta r)^{2\lambda}r,$$
for some $0<\theta<1$.
 When $r>x$,
$$m_\lambda(I(x, r))=\int_{0}^{x+r} y^{2\lambda} dy={1\over 2\lambda+1}(x+r)^{2\lambda+1}.$$

\noindent Therefore, we can see that ${\log r\over m_\lambda(I(x, r))}$ is decreasing on $(1,\infty)$.
\end{example}

\medskip
\section{Weighted version of the commutator theorems}
\label{sec:weighted}

We will further provide the weighted version of Theorems \ref{thm-mp}, \ref{thm-msharp} and \ref{thm-lp} by
establishing new characterisations of ${\rm BMO}(X)$ via having the Muckenhoupt weight in both the denominator and the integrand in the definition of BMO norm.

We recall that a locally integrable function $w:X\rightarrow (0,+\infty)$ is an ${\mathcal{A}}_p$ weight (or $w\in{\mathcal{A}}_p$), $1<p<\infty$ if

$$\left[w\right]_{{\mathcal{A}}_p}:=\sup_{B} \left(\frac{1}{\mu(B)}\int_B w(x) d\mu(x)\right)\left(\frac{1}{\mu(B)}\int_B w(x)^{-1/(p-1)} d\mu(x)\right)^{p-1}<\infty.$$

\noindent A function $w(x)\geq 0$ is called an ${\mathcal{A}}_1$ weight (or $w\in{\mathcal{A}}_1$) if $M(w)(x)\leq C w(x)$ for $x\in X$. The class of ${\mathcal{A}}_p$ weights is increasing with $p$ for $1\leq p<\infty$. The next lemma is a weighted version of Lemma \ref{lem-mpb}.

\begin{lemma}\label{lem-mpb-weight}
Let $b\in{\rm BMO}(X)$ be a nonnegative function, $w$ be an ${\mathcal{A}}_q$ weight and $1<q<\infty$. Suppose that $T$ is a quasilinear operator satisfying (a)-(c) from Section \ref{sec:pointwise} and is bounded on $L^q(X,wd\mu)$. Then
$[T, b]$ is bounded on $L^q(X,wd\mu)$.
\end{lemma}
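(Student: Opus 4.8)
The plan is to reduce the statement to the unweighted Lemma~\ref{lem-mpb} by transferring everything to a new space of homogeneous type. Since $w\in\mathcal{A}_q\subset\mathcal{A}_\infty$, the measure $d\nu:=w\,d\mu$ is again doubling (the product of a doubling measure with an $\mathcal{A}_\infty$ weight is doubling on spaces of homogeneous type), and $\nu(\{x_0\})=w(x_0)\,\mu(\{x_0\})=0$ for every $x_0\in X$; hence $(X,\rho,\nu)$ is a space of homogeneous type and $L^q(X,w\,d\mu)=L^q(X,\nu)$. Conditions (a)--(c) on $T$ do not refer to the underlying measure, and $T$ is bounded on $L^q(X,\nu)$ by hypothesis, so the only remaining ingredient needed before invoking Lemma~\ref{lem-mpb} on $(X,\rho,\nu)$ is that $b$ is a nonnegative function in ${\rm BMO}(X,\nu)$.

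Thus the key step is to check that ${\rm BMO}(X,\mu)\hookrightarrow{\rm BMO}(X,\nu)$ whenever $w\in\mathcal{A}_q$ (nonnegativity of $b$ is obviously preserved, and local $\nu$-integrability of $b$ follows from the same estimate together with $b\in L^{r'}_{loc}(\mu)$ and $w\in L^{r}_{loc}(\mu)$). Fixing a ball $B$ and writing $m_B(b)$ for the $\mu$-average of $b$, I would bound $\frac1{\nu(B)}\int_B|b-m_B(b)|\,d\nu$ by H\"older's inequality with exponents $(r',r)$, then apply the reverse H\"older inequality for the $\mathcal{A}_\infty$ weight $w$ (there is $r>1$ with $\big(\frac1{\mu(B)}\int_B w^r\,d\mu\big)^{1/r}\le C\,\frac1{\mu(B)}\int_B w\,d\mu=C\,\frac{\nu(B)}{\mu(B)}$) together with the $L^{r'}$-form of the ${\rm BMO}$ norm (Lemma~\ref{lem-bmoq}); this gives $\frac1{\nu(B)}\int_B|b-m_B(b)|\,d\nu\le C\,\|b\|_{{\rm BMO}(X,\mu)}$. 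Since replacing $m_B(b)$ by the $\nu$-average of $b$ over $B$ costs at most a factor $2$, and $B$ is arbitrary, this yields $b\in{\rm BMO}(X,\nu)$ with $\|b\|_{{\rm BMO}(X,\nu)}\lesssim\|b\|_{{\rm BMO}(X,\mu)}$.

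Granting this, the proof concludes at once: apply Lemma~\ref{lem-mpb} --- i.e.\ \cite[Theorem 4.4]{MiS} --- on the space of homogeneous type $(X,\rho,\nu)$ to the quasilinear operator $T$, bounded on $L^q(X,\nu)$, and to the nonnegative ${\rm BMO}(X,\nu)$ function $b$, to obtain that $[T,b]$ is bounded on $L^q(X,\nu)=L^q(X,w\,d\mu)$. I expect the only delicate point to be the transfer step of the second paragraph: one must invoke, in the generality of spaces of homogeneous type, the self-improving reverse H\"older inequality for $\mathcal{A}_\infty$ weights and the doubling property of $w\,d\mu$; both are classical but should be cited with care. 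An alternative, slightly longer route would bypass the change of measure and run the Coifman--Rochberg--Weiss / Milman--Schonbek commutator scheme directly on $L^q(X,w\,d\mu)$, using the stability of the class $\mathcal{A}_q$ under multiplication by $e^{\gamma b}$ for small $\gamma$ (a consequence of the John--Nirenberg inequality, Lemma~\ref{lem-jn2}).
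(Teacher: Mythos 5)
Your approach is genuinely different from the paper's, and your closing remark actually anticipates exactly what the paper does: the paper runs the Milman--Schonbek interpolation scheme directly in the weighted setting, proving that $e^{db}w\in\mathcal{A}_q(\mu)$ for $|d|$ small enough (using the John--Nirenberg inequality, Lemma~\ref{lem-jn2}, together with a reverse H\"older exponent $\delta$ for $w$) and then applying \cite[Theorem 4.4]{MiS} to the pair $\bigl(L^q(X,e^{-db}w\,d\mu),\,L^q(X,e^{db}w\,d\mu)\bigr)$. Your change-of-measure route is shorter and more conceptual, and its ingredients are individually sound: $w\in\mathcal{A}_q\subset\mathcal{A}_\infty$ does make $\nu=w\,d\mu$ doubling with $\nu(\{x_0\})=0$, and the embedding ${\rm BMO}(X,\mu)\hookrightarrow{\rm BMO}(X,\nu)$ via reverse H\"older for $w$ together with the $L^{r'}$ form of the ${\rm BMO}$ norm (Lemma~\ref{lem-bmoq}) is the classical Muckenhoupt--Wheeden fact and carries over to spaces of homogeneous type. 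The one point to flag is that Lemma~\ref{lem-mpb} is not self-contained: it is \cite[Theorem 4.4]{MiS} in disguise, and its proof requires $T$ to be bounded on the interpolation family $L^q(e^{tb}\,d\nu)$ for $|t|$ small, not merely on the single space $L^q(\nu)$. On the original measure this is what makes $e^{tb}\in\mathcal{A}_q(\mu)$ matter; after your change of measure it becomes $L^q(e^{tb}w\,d\mu)$, and the weight condition needed is precisely $e^{tb}w\in\mathcal{A}_q(\mu)$ --- which is exactly the computation the paper's proof carries out. So invoking Lemma~\ref{lem-mpb} as a black box on $(X,\rho,\nu)$ does not avoid the key estimate; it defers it to the unstated hypotheses inside Lemma~\ref{lem-mpb}. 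Your argument is at the same level of rigor as the paper's other uses of Lemma~\ref{lem-mpb}, but if one wants the lemma to be self-justifying one should still record the step $e^{tb}w\in\mathcal{A}_q(\mu)$ (or the corresponding $\mathcal{A}_q(\nu)$ statement), as the paper does.
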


\begin{proof}
We use a similar argument to \cite[Theorem 4.4]{MiS}. Let $d$ be a real constant and $m(x)=e^{db(x)}$ for $x\in X$. We claim that if $\vert d\vert\leq C_3/\delta'$ then $mw\in {\mathcal{A}}_q$. Let $\delta>1$ be such that $w^{\delta}\in {\mathcal{A}}_q$,  $\alpha_1(B)=\frac{1}{\mu(B)}\int_{B}w(x)e^{db(x)}d\mu(x)$ and $$\alpha_2(B)=\left(\frac{1}{\mu(B)}\int_{B} w(x)^{-1/(q-1)} e^{-db(x)/(q-1)}d\mu(x)\right)^{q-1}, $$for any ball $B\subset X$. Using Holder's inequality with $\delta>1$ and its conjugate $\delta'$ ($\frac{1}{\delta}+\frac{1}{\delta'}=1$) in $\alpha_1(B)$ and $\alpha_2(B)$, we get for any ball $B\subset X$,

$$\alpha_1(B)\leq \left(\frac{1}{\mu(B)}\int_{B}w(x)^{\delta}d\mu(x)\right)^{1/\delta}\left(\frac{1}{\mu(B)}\int_{B}e^{db(x)\delta'}d\mu(x)\right)^{1/\delta'} $$

\noindent and

$$\alpha_2(B)\leq\left(\frac{1}{\mu(B)}\int_{B} w(x)^{-\delta/(q-1)}d\mu(x)\right)^{(q-1)/\delta}\left(\frac{1}{\mu(B)}\int_B e^{-db(x)\delta'/(q-1)}d\mu(x)\right)^{(q-1)/\delta'}. $$

Let $m_B(b)=\frac{1}{\mu(B)}\int_Bb(x) d\mu(x)$. Observe that

\begin{align*}
&\left(\frac{1}{\mu(B)}\int_{B}e^{db(x)\delta'}d\mu(x) \right)\left(\frac{1}{\mu(B)}\int_B e^{-db(x)\delta'/(q-1)}d\mu(x)\right)^{(q-1)}\\
&\leq\frac{1}{\mu(B)}\int_{B}e^{d(b(x)-m_B(b))\delta'}d\mu(x) \left(\frac{1}{\mu(B)}\int_B e^{-d(b(x)-m_B(b))\delta'/(q-1)}d\mu(x)\right)^{(q-1)}\\
&\leq\frac{1}{\mu(B)}\int_{B}e^{\vert d\vert\cdot\vert b(x)-m_B(b)\vert\delta'}d\mu(x) \left(\frac{1}{\mu(B)}\int_B e^{\vert d\vert \cdot\vert b(x)-m_B(b)\vert\delta'/(q-1)}d\mu(x)\right)^{(q-1)}.
\end{align*}

\noindent Using Lemma \ref{lem-jn2}, we obtain that

$$\sup_{B}\frac{1}{\mu(B)}\int_{B}e^{\vert d\vert\cdot\vert b(x)-m_B(b)\vert\delta'}d\mu(x)\leq C_4,$$

\noindent for $\vert d\vert\leq C_3/\delta'$. If $q\geq 2$ then,
\begin{align*}
\sup_{B}\left(\frac{1}{\mu(B)}\int_B e^{\vert d\vert \cdot\vert b(x)-m_B(b)\vert\delta'/(q-1)}d\mu(x)\right)^{(q-1)}&\leq \sup_{B}\left(\frac{1}{\mu(B)}\int_B e^{\vert d\vert \cdot\vert b(x)-m_B(b)\vert\delta'}d\mu(x)\right) \\
&\leq C_4.
\end{align*}

\noindent The same inequality holds for $1<q<2$ with $\vert d\vert\leq C_3/\delta'$, since for $q'=\frac{q}{q-1}$ the conjugate of $q$, we have

$$\left[e^{\vert d\vert \cdot\vert b(x)-m_B(b)\vert\delta'}\right]_{{\mathcal{A}_q}}= \left[e^{-\vert d\vert \cdot\vert b(x)-m_B(b)\vert\delta'/(q'-1)}\right]_{{\mathcal{A}_{q'}}}^{q'-1}.$$
%

\noindent  Now, as $w^{\delta}\in {\mathcal{A}}_q$, we obtain that

$$\sup_{B}\left(\frac{1}{\mu(B)}\int_{B}w(x)^{\delta}d\mu(x)\right)^{1/\delta}\left(\frac{1}{\mu(B)}\int_{B} w(x)^{-\delta/(q-1)}\right)^{(q-1)/\delta}<\infty.$$

\noindent It follows that
$$\sup_B\alpha_1(B)\alpha_2(B)\leq C_5$$

\noindent Choose $d$ with $\vert d\vert\leq C_3/\delta'$. We apply \cite[Theorem 4.4]{MiS} with the pair of Banach spaces $\overline{A}=\left(L^p(X,e^{-db}\omega d\mu),L^p(X,e^{db}\omega d\mu)\right)$ using that $\overline{A}_{1/2,q}=L^q(X,wdx)$ to obtain that $[T, b]$ is bounded on $L^q(X,wdx)$.
The proof of Lemma \ref{lem-mpb-weight} is complete.
\end{proof}

The same result holds if $b\in {\rm BMO}(X)$ with $b^{-}\in L^{\infty}(X)$ using the inequality $$\left\vert [T,b]f-[T,\vert b\vert]f\right\vert\leq 2(b^{-}T(f)+T(b^{-}f))$$ for $f\in L^p(X,wd\mu)$.

\begin{theorem}\label{thm-mp-weight}
Let $b$ be a real valued, locally integrable function in $X$ and $w$ an ${\mathcal{A}}_p$ weight for $1<p<\infty$. The following assertions are equivalent:

{\rm (i)} $b\in {\rm BMO}(X)$ and $b^-\in L^\infty(X)$;

{\rm (ii)} The commutator $[M_p, b]$ is bounded on $L^q(X,wd\mu)$, for all $q$, $p<q<\infty$;

{\rm (iii)} The commutator $[M_p, b]$ is bounded on $L^q(X,wd\mu)$, for some $q$, $p<q<\infty$.

{\rm (iv)} For all $q\in [1,\infty)$, we have
$$\sup_{B}{1\over w(B)}\int_B|b(x)-M_p(b\chi_B)(x)|^q w(x)d\mu(x)<\infty;$$

{\rm (v)} There exists $q\in [1,\infty)$ such that
$$\sup_{B}{1\over w(B)}\int_B|b(x)-M_p(b\chi_B)(x)|^q w(x)d\mu(x)<\infty.$$

\end{theorem}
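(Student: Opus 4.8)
The overall strategy mirrors the unweighted Theorem \ref{thm-mp}, with $\mu$ replaced by $wd\mu$ throughout and Lemma \ref{lem-mpb-weight} (and the remark following it) replacing Proposition \ref{pro-mpb}. First I would record the trivial implications: (ii)$\Rightarrow$(iii) and (iv)$\Rightarrow$(v) are immediate, since "for all $q$" is stronger than "for some $q$". For (i)$\Rightarrow$(ii): the operator $M_p$ is quasilinear satisfying (a)--(c) from Section \ref{sec:pointwise} and, since $w\in\mathcal A_p\subset\mathcal A_q$ for $p<q<\infty$, $M_p$ is bounded on $L^q(X,wd\mu)$ (because $M_p f=(M(|f|^p))^{1/p}$ and $M$ is bounded on $L^{q/p}(X,wd\mu)$ as $w\in\mathcal A_{q/p}$ — here one uses $q/p>1$ and that $\mathcal A_p\subset\mathcal A_{q/p}$ when... actually one needs $w\in\mathcal A_{q/p}$, which follows from $w\in\mathcal A_p$ only if $q/p\ge p$; more robustly, note $w\in\mathcal A_p\subset\mathcal A_q$ and $M_pf\le M_q f$ pointwise is false, so the cleanest route is: $w\in\mathcal A_p$ implies $w^{1/p}\in\mathcal A_1\cdot$-type reverse-Hölder gives $w\in\mathcal A_{q/p}$ for $q$ large; alternatively invoke that $M_p$ is bounded on $L^q(wd\mu)$ whenever $w\in\mathcal A_{q/p}$ and use $\mathcal A_p\subseteq\mathcal A_{q/p}$ valid once $q\ge p^2$, then bootstrap — but in fact the standard fact is that $w\in\mathcal A_p$ with $p<q$ already gives $w\in\mathcal A_{q/p}$ is \emph{not} generally true, so I would simply state: since $w\in\mathcal A_p$, by the openness of $\mathcal A_p$ classes and self-improvement, $M_p$ is bounded on $L^q(X,wd\mu)$ for all $q>p$). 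Granting that boundedness, Lemma \ref{lem-mpb-weight} and its subsequent remark (allowing $b$ with $b^-\in L^\infty$) give that $[M_p,b]$ is bounded on $L^q(X,wd\mu)$ for all such $q$, which is (ii). The implication (ii)$\Rightarrow$(iv) is then identical to (iii)$\Rightarrow$(v) below.

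For (iii)$\Rightarrow$(v): fix a ball $B$ and test the $L^q(X,wd\mu)$-boundedness of $[M_p,b]$ on $f=\chi_B$. As in the proof of Theorem \ref{thm-mp}, $M_p(\chi_B)=\chi_B$ and $M_p(b\chi_B)(x)=M_{p,B}(b)(x)$ for $x\in B$ — wait, here the statement of (iv)/(v) is phrased with $M_p(b\chi_B)$ rather than $M_{p,B}(b)$; since for $x\in B$ and any ball $B_0\ni x$ the part of $B_0$ outside $B$ contributes nothing to $M_p(b\chi_B)$, we have $M_p(b\chi_B)(x)\ge M_{p,B}(b)(x)$ and in fact the relevant lower bound $M_p(b\chi_B)(x)\ge |b(x)|$ for a.e.\ $x\in B$ (by Lebesgue differentiation) is what is actually used. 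Then
$$\Big(\int_B|b-M_p(b\chi_B)|^q wd\mu\Big)^{1/q}=\big\|[M_p,b]\chi_B\big\|_{L^q(B,wd\mu)}\le\big\|[M_p,b]\big\|_{L^q(wd\mu)\to L^q(wd\mu)}w(B)^{1/q},$$
giving (v) with supremum bounded by $\|[M_p,b]\|$.

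For (v)$\Rightarrow$(i), this is where the weighted setting requires the most care, and I expect it to be the main obstacle. One wants to recover $b\in{\rm BMO}(X)$ (ordinary, unweighted BMO) and $b^-\in L^\infty$ from a \emph{weighted} integral estimate. The key algebraic observations from Theorem \ref{thm-mp} still hold pointwise: $M_p(b\chi_B)(x)\ge |b(x)|\ge b(x)$ and $M_p(b\chi_B)(x)\ge m_B(b)$... — actually $M_p(b\chi_B)(x)\ge \big(\frac1{\mu(B)}\int_B|b|^p\big)^{1/p}\ge |m_B(b)|$ for $x\in B$ — so on $E_1=\{x\in B: b(x)\le m_B(b)\}$ one has $0\le m_B(b)-b(x)\le M_p(b\chi_B)(x)-b(x)$, and since $\int_{E_1}|b-m_B(b)|d\mu=\int_{E_2}|b-m_B(b)|d\mu$, the symmetrization trick bounds $\frac1{w(B)}\int_B|b-m_B(b)|wd\mu$ by $\frac2{w(B)}\int_B|b-M_p(b\chi_B)|wd\mu$, which by Hölder against the $L^q(wd\mu)$ estimate in (v) is $\le C$. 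This yields the \emph{$\mathcal A_p$-weighted} BMO bound $\sup_B\frac1{w(B)}\int_B|b-m_B(b)|wd\mu<\infty$; one then invokes (or proves, via the new weighted-BMO characterization the section advertises, together with Lemma \ref{lem-jn1}/\ref{lem-jn2} and the $\mathcal A_\infty$ property of $w$) that this weighted condition is equivalent to $b\in{\rm BMO}(X)$. Finally, from $0\le b^-(x)\le M_p(b\chi_B)(x)-b(x)$ on $B$, the same Hölder estimate gives $\frac1{w(B)}\int_B b^- wd\mu\le C$ uniformly in $B$; letting $\mu(B)\to 0$ and using the Lebesgue differentiation theorem for the measure $wd\mu$ (valid since $wd\mu$ is doubling, $w\in\mathcal A_p$) forces $b^-(x)\le C$ a.e., so $b^-\in L^\infty(X)$. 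The main obstacle is thus the passage from the weighted BMO-type inequality back to unweighted ${\rm BMO}(X)$; I would isolate this as a lemma (presumably one of the "new characterisations of ${\rm BMO}(X)$" mentioned at the start of Section \ref{sec:weighted}) and cite it here, its proof resting on the John--Nirenberg inequalities (Lemmas \ref{lem-jn1}, \ref{lem-jn2}) and the reverse-Hölder/$\mathcal A_\infty$ properties of the weight $w$.
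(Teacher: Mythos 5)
Your proposal is correct in outline for the trivial implications, (i)$\Rightarrow$(ii), and (iii)$\Rightarrow$(v), all of which match the paper's route (Lemma \ref{lem-mpb-weight} plus the remark after it for (i)$\Rightarrow$(ii), and testing $[M_p,b]$ on $\chi_B$ for (iii)$\Rightarrow$(v)). However, your treatment of (v)$\Rightarrow$(i) diverges from the paper and has a genuine gap.

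The paper's argument in (v)$\Rightarrow$(i) applies H\"older's inequality \emph{first}, writing $|b-M_{p,B}(b)| = |b-M_{p,B}(b)|\,w^{1/q}w^{-1/q}$ and using the $\mathcal A_q$ condition on $w$ to bound
\begin{align*}
\frac{1}{\mu(B)}\int_B|b-M_{p,B}(b)|\,d\mu
\;\leq\; [w]_{\mathcal A_q}^{1/q}\left(\frac{1}{w(B)}\int_B|b-M_{p,B}(b)|^q\,w\,d\mu\right)^{1/q}\;\leq\;C.
\end{align*}
This removes the weight entirely and reduces to the unweighted bound on $\frac{1}{\mu(B)}\int_B|b-M_{p,B}(b)|\,d\mu$, after which the symmetrization argument and the passage to $b^-\in L^\infty$ proceed \emph{exactly} as in Theorem \ref{thm-mp}, with the unweighted measure $d\mu$.

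Your proposal instead symmetrizes while the weight is still present: you claim that $\int_{E_1}|b-m_B(b)|\,d\mu=\int_{E_2}|b-m_B(b)|\,d\mu$ lets you bound $\frac{1}{w(B)}\int_B|b-m_B(b)|\,w\,d\mu$ by $\frac{2}{w(B)}\int_B|b-M_p(b\chi_B)|\,w\,d\mu$. This is incorrect: the mean-zero identity $\int_{E_1}(m_B(b)-b)\,d\mu=\int_{E_2}(b-m_B(b))\,d\mu$ holds only because $m_B(b)$ is the $d\mu$-average, and it does \emph{not} lift to the measure $w\,d\mu$. Consequently the symmetrization step fails for the weighted integral, and you are then forced to defer to an unproved lemma converting a $w$-weighted BMO bound into an unweighted ${\rm BMO}(X)$ membership. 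No such lemma is stated or needed in the paper; the H\"older reduction makes it unnecessary. The fix is simply to apply H\"older before symmetrizing, as the paper does.
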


\begin{proof}[Proof of Theorem \ref{thm-mp-weight}]
It is clear that (ii) implies (iii). For (i)$\Rightarrow$(ii), since $M_p$ satisfies (a)-(c) and is bounded on $L^q(X,wd\mu)$ for $p<q<\infty$, the result follows from Lemma \ref{lem-mpb-weight}. For (iii)$\Rightarrow$(v), as in the unweighted case, one can write $(b\chi_B-M_{p,B}(b))\chi_B=(bM_p(\chi_B)-M_p(b\chi_B))\chi_B=[M_p,b](\chi_B)\chi_B$ and

\begin{align*}
\int_{B}\vert b-M_{p,B}(b)\vert^q w d\mu&=\int_B\vert [M_p,b](\chi_B)\vert^qwd\mu\\
&\leq\Vert [M_p,b](\chi_B)\Vert_{L^q(X,wd\mu)}^q\leq C\Vert \chi_B\Vert_{L^q(X,wd\mu)}^q,
\end{align*}

\noindent where the last inequality follows from the boundedness of $[M_p,b]$ on $L^q(X,wd\mu)$. As $\Vert \chi_B\Vert_{L^q(X,wd\mu)}^q=\nu(B)$, we obtain (v). For (v)$\Rightarrow$(i), let $B$ be a fixed ball. By H\"older's inequality, we have

\begin{align*}
{1\over \mu(B)}\int_B|b-M_{p,B}(b)|d\mu(x)&={1\over \mu(B)}\int_B|b-M_{p,B}(b)|w^{1/q}(x) w(x)^{-1/q} d\mu(x)\\
&\leq \left({1\over \mu(B)}\int_B\left|b-M_{p,B}(b)\right|^q w(x)d\mu(x) \right)^{1\over q}\\
&\qquad\qquad\qquad\qquad\times\left({1\over\mu(B)}\int_B w(x)^{-q'/q} d\mu(x)\right)^{1/q'}\\
&\leq K\left({1\over \mu(B)}\int_B\left |b-M_{p,B}(b)\right|^q w(x)d\mu(x) \right)^{1\over q}\\\
&\qquad\qquad\qquad\qquad\times\left({1\over\mu(B)}\int_B w(x)d\mu(x)\right)^{-1/q}\\
&\leq K \left({1\over w(B)}\int_B\left |b-M_{p,B}(b)\right|^q w(x)d\mu(x) \right)^{1\over q}\\
&\leq C.
\end{align*}

\noindent Now, in the same way as the unweighted version, we have
\begin{align*}
{1\over \mu(B)}\int_B|b-m_{B}(b)|d\mu&=\int_{E_1}|b-M_{p,B}(b)|d\mu\\
&\leq{2\over \mu(B)}\int_B|b-M_{p,B}(b)|d\mu\leq C.
\end{align*}

\noindent Therefore, $b\in{\rm BMO}(X)$. In the same way as the unweighted case, $b^-\in L^{\infty}(X)$. The implication of (ii)$\Rightarrow$(iv) is similar to (iii)$\Rightarrow$(v). This completes the proof of Theorem \ref {thm-mp-weight}.
\end{proof}

\begin{theorem}\label{thm-msharp-weight}
Let $b$ be a real valued, locally integrable function in $X$ and $w$ be an ${\mathcal{A}}_1$ weight. The following assertions are equivalent:

{\rm (i)} $b\in {\rm BMO}(X)$ and $b^-\in L^\infty(X)$;

{\rm (ii)} The commutator $[M^\sharp, b]$ is bounded on $L^q(X,wd\mu)$, for all $q$, $1<q<\infty$;

{\rm (iii)} The commutator $[M^\sharp, b]$ is bounded on $L^q(X,wd\mu)$, for some $q$, $1<q<\infty$;

{\rm (iv)} For all $q\in [1,\infty)$, we have
$$\sup_{B}{1\over w(B)}\int_B|b(x)-2M^\sharp(b\chi_B)(x)|^q w(x)d\mu(x)<\infty;$$

{\rm (v)} There exists $q\in [1,\infty)$ such that
$$\sup_{B}{1\over w(B)}\int_B|b(x)-2M^\sharp(b\chi_B)(x)|^q w(x)d\mu(x)<\infty.$$
\end{theorem}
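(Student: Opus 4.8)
The plan is to mirror the structure of the proof of the unweighted Theorem \ref{thm-msharp}, replacing Proposition \ref{pro-mpb} by its weighted counterpart (the weighted Lemma \ref{lem-mpb-weight} together with the subsequent remark about ${\rm BMO}$ functions with $b^-\in L^\infty$), and replacing the $(b\chi_B)$-computation with the quantity $M^\sharp(b\chi_B)$ which has already been analyzed in the unweighted proof. The implications (ii)$\Rightarrow$(iii) and (iv)$\Rightarrow$(v) are trivial, and (ii)$\Rightarrow$(iv) runs exactly as (iii)$\Rightarrow$(v), so four implications remain.

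For (i)$\Rightarrow$(ii): since $M^\sharp f\le 2Mf$ pointwise, $M^\sharp$ is bounded on $L^q(X,wd\mu)$ for every ${\mathcal{A}}_1$ weight $w$ (indeed for every ${\mathcal{A}}_q$ weight, $1<q<\infty$, by the weighted boundedness of $M$), and $M^\sharp$ satisfies (a)--(c) from Section \ref{sec:pointwise}. Hence Lemma \ref{lem-mpb-weight} applied to $T=M^\sharp$ gives the boundedness of $[M^\sharp,b]$ on $L^q(X,wd\mu)$ when $b\in{\rm BMO}(X)$ is nonnegative, and the remark following Lemma \ref{lem-mpb-weight} extends this to $b\in{\rm BMO}(X)$ with $b^-\in L^\infty(X)$.

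For (iii)$\Rightarrow$(v): fix a ball $B$ and take $f=\chi_B$. As computed in the proof of Theorem \ref{thm-msharp}, $M^\sharp(\chi_B)(x)=\tfrac12$ for $x\in B$, so on $B$ one has $(\tfrac12 b-M^\sharp(b\chi_B))\chi_B=(bM^\sharp(\chi_B)-M^\sharp(b\chi_B))\chi_B=[M^\sharp,b](\chi_B)\chi_B$. Therefore
\begin{align*}
\int_B\Bigl|\tfrac12 b(x)-M^\sharp(b\chi_B)(x)\Bigr|^q w(x)d\mu(x)
&=\int_B\bigl|[M^\sharp,b](\chi_B)(x)\bigr|^q w(x)d\mu(x)\\
&\le\bigl\|[M^\sharp,b](\chi_B)\bigr\|_{L^q(X,wd\mu)}^q
\le C\,\|\chi_B\|_{L^q(X,wd\mu)}^q=C\,w(B),
\end{align*}
using the boundedness of $[M^\sharp,b]$ on $L^q(X,wd\mu)$; after multiplying by $2^q$ and dividing by $w(B)$ this is exactly (v) (with a possibly different constant).

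For (v)$\Rightarrow$(i): as in the unweighted case we first record the pointwise bound $|m_B(b)|\le 2M^\sharp(b\chi_B)(x)$ for $x\in B$, whose proof (using a ball $B_1\supset B$ with $\mu(B_1)=2\mu(B)$ and $m_{B_1}(b\chi_B)=\tfrac12 m_B(b)$) does not involve the weight. Then, writing $E=\{x\in B:b(x)\le m_B(b)\}$ and using the identity $\int_E(m_B(b)-b)d\mu=\tfrac12\int_B|b-m_B(b)|d\mu$, we bound
\begin{align*}
{1\over\mu(B)}\int_B|b-m_B(b)|d\mu
&={2\over\mu(B)}\int_E\bigl(m_B(b)-b(x)\bigr)d\mu(x)
\le{2\over\mu(B)}\int_B\bigl|2M^\sharp(b\chi_B)(x)-b(x)\bigr|d\mu(x),
\end{align*}
and now the weight is inserted via H\"older's inequality with exponents $q$ and $q'$: the integral is at most
$$
\Bigl({1\over\mu(B)}\int_B|2M^\sharp(b\chi_B)-b|^q w\,d\mu\Bigr)^{1/q}
\Bigl({1\over\mu(B)}\int_B w^{-q'/q}d\mu\Bigr)^{1/q'}.
$$
The first factor is $\le C\,(w(B)/\mu(B))^{1/q}$ by (v), and since $w\in{\mathcal{A}}_1\subset{\mathcal{A}}_q$ the reverse-H\"older / ${\mathcal{A}}_q$ estimate controls the product $(\tfrac1{\mu(B)}\int_B w)^{1/q}(\tfrac1{\mu(B)}\int_B w^{-q'/q})^{1/q'}$ by a constant, so the whole right-hand side is bounded uniformly in $B$; hence $b\in{\rm BMO}(X)$. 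For $b^-\in L^\infty(X)$, we use, exactly as in the unweighted proof, the pointwise inequality $2M^\sharp(b\chi_B)(x)-b(x)\ge|m_B(b)|-b^+(x)+b^-(x)$ on $B$; averaging against $w\,d\mu$ over $B$ (and using $w(B)^{-1}\int_B(2M^\sharp(b\chi_B)-b)w\,d\mu\le C$ from (v) together with $m_B(b^-)\le C m_{B,w}(b^-)$-type control coming again from ${\mathcal{A}}_1$) and letting $\mu(B)\to 0$ with $x\in B$, the Lebesgue differentiation theorem yields $2b^-(x)\le C$ a.e.

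I expect the main obstacle to be the final step, (v)$\Rightarrow$(i), specifically the extraction of the unweighted average $m_B(b^-)\le C$ (equivalently of $b^-\in L^\infty$) from the weighted hypothesis: one must pass from a weighted average to an ordinary one. This is precisely where the ${\mathcal{A}}_1$ (rather than merely ${\mathcal{A}}_q$) hypothesis on $w$ should be used, since ${\mathcal{A}}_1$ gives $w^{-1}\in L^\infty(d\mu/\mu(B))$-type lower bounds on $w$ that let one dominate $\mu$-averages by $w$-averages; the ${\rm BMO}$ part only needs the ${\mathcal{A}}_q$ H\"older duality above. All the geometric computations with $M^\sharp(\chi_B)$, $m_{B_1}(b\chi_B)$ and the splitting $E=\{b\le m_B(b)\}$ are identical to the unweighted proof and may simply be cited from there.
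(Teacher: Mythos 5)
Your proof proposal is correct in structure and follows the same route as the paper: trivial implications, Lemma \ref{lem-mpb-weight} for (i)$\Rightarrow$(ii), testing $[M^\sharp,b]$ against $\chi_B$ together with $M^\sharp(\chi_B)\equiv\tfrac12$ on $B$ for (iii)$\Rightarrow$(v), and the pointwise bound $|m_B(b)|\leq 2M^\sharp(b\chi_B)$ plus the set $E=\{b\leq m_B(b)\}$ plus weighted H\"older for (v)$\Rightarrow$(i). The one place you deviate is the $b^-\in L^\infty$ step, which you treat by averaging against $w\,d\mu$ and then invoking an $\mathcal{A}_1$ comparison $m_B(b^-)\lesssim m_{B,w}(b^-)$; this works, but it is more indirect than what the paper intends. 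Observe that your H\"older computation for the BMO half already yields the genuinely unweighted bound $\frac{1}{\mu(B)}\int_B\big|2M^\sharp(b\chi_B)-b\big|\,d\mu\leq C$ uniformly in $B$; once that is in hand, the unweighted argument from Theorem \ref{thm-msharp} (integrate $2M^\sharp(b\chi_B)-b\geq|m_B(b)|-b^++b^-$ over $B$ with respect to $d\mu$, shrink $B$, apply Lebesgue differentiation) applies verbatim, with no further use of the weight. Consequently your diagnosis that $\mathcal{A}_1$ is ``precisely'' needed to pass from weighted to unweighted averages in the $b^-$ step is misplaced: the weighted H\"older already does that, using only $w\in\mathcal{A}_q$, and the $\mathcal{A}_1$ hypothesis in the theorem serves rather to guarantee $w\in\mathcal{A}_q$ for \emph{every} $q\in(1,\infty)$ (so that (ii) and (iv) are well-posed for all $q$) and to cover the endpoint $q=1$ in (iv)--(v), where the H\"older exponent $q'$ would be infinite.
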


\begin{proof}[Proof of Theorem \ref {thm-msharp-weight}]
\noindent Obviously, (ii)$\Rightarrow$(iii), and (iv)$\Rightarrow$(v).

 For (i)$\Rightarrow$(ii), since $M^\sharp$ satisfies (a)-(c) and is bounded on $L^q(X,wd\mu)$, the result follows from Lemma \ref{lem-mpb-weight}.

 (iii)$\Rightarrow$(v). Let $B$ be a fixed ball. We know from the Proof of Theorem \ref {thm-msharp} that $M^\sharp\left(\chi_B \right)(x)={1\over 2}$, for all $x\in B$ and ${1\over 2}b\chi_B-M^\sharp(b\chi_B)\chi_B=[M^\sharp,b](\chi_B)\cdot \chi_B$. By the assumption of (iii), one has
\begin{align*}
&\left(\int_B\left\vert {1\over 2}b(x)-M^\sharp(b\chi_B)(x)\right\vert^q\ w(x)d\mu(x) \right)^{1/q}\\
&= \Vert[M^\sharp,b](\chi_B)\chi_B\Vert_{L^q(X,wd\mu)}\leq  \Vert[M^\sharp,b](\chi_B)\Vert_{L^q(X,wd\mu)}\\
&\leq C \Vert \chi_B\Vert_{L^q(X,wd\mu)}\\
&\leq C w(B)^{1/q},
\end{align*}

\noindent which implies (iv). For (v)$\Rightarrow$(i), from the Proof of Theorem \ref{thm-msharp}, we have
$$|m_{B}(b)|\leq 2M^\sharp\left( b\chi_B\right) (x), \quad x\in B.$$

\noindent To show that $b\in {\rm BMO}(X)$, we use the same arguments as in the Proof of Theorem \ref{thm-msharp} and Theorem \ref{thm-mp-weight} with the set $E=\{x\in B:b(x)\leq m_{B}(b)\}$. For ${1\over q}+{1\over q'}=1$, we get

\begin{align*}
&{1\over \mu(B)}\int_B\left| b(x)-m_{B}(b)\right|d\mu(x)\\
&={2\over \mu(B)}\int_E\left( b(x)-m_{B}(b)\right)d\mu(x)\\
&\leq{2\over \mu(B)}\int_E\left (2 M^\sharp\left( b\chi_B\right)(x)-b(x)\right)d\mu(x)\\
&\leq{2\over \mu(B)}\int_B\left| 2 M^\sharp\left( b\chi_B\right)(x)-b(x)\right| w(x)^{1/q} w(x)^{-1/q}d\mu(x)\\
&\leq 2\left({1\over \mu(B)}\int_B\left| 2 M^\sharp\left( b\chi_B\right)(x)-b(x)\right|^q w(x)d\mu(x)\right)^{1/q}\left({1\over \mu(B)}\int_B w(x)^{-q'/q}d\mu(x)\right)^{1/q'}\\
&\leq 2\left({1\over \mu(B)}\int_B\left| 2 M^\sharp\left( b\chi_B\right)(x)-b(x)\right|^q w(x)d\mu(x)\right)^{1/q}{[w]_{{\mathcal{A}}_q}^{1/q}\over \mu(B)^{-1/q}}\left(\int_B w(x) d\mu(x)\right)^{-1/q}\\
&\leq K \left({1\over w(B)}\int_B\left| 2 M^\sharp\left( b\chi_B\right)(x)-b(x)\right|^q w(x)d\mu(x)\right)^{1/q}\\
&\leq C.
\end{align*}

\noindent One can prove in the same way as in the Proof of Theorem \ref{thm-msharp} that $b^-\in L^{\infty}(X)$. The implication of (ii)$\Rightarrow$(iv) is similar to (iii)$\Rightarrow$(v), which ends the proof of Theorem \ref{thm-msharp-weight}.
\end{proof}

\color{black}

\begin{theorem}\label{thm-lp-weight}
Let $b\in L_{loc}^1(X)$, $1<p<\infty$ and $w\in {\mathcal{A}}_p$. Then the maximal commutator $C_b$ is bounded on $L^p(X,wd\mu)$ if and only if $b\in {\rm BMO(X)}$.
\end{theorem}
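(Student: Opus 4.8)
The plan is to follow the template of the unweighted Theorem \ref{thm-lp}, replacing the Lebesgue measure estimates by their weighted analogues. For the \textbf{sufficiency}, suppose $b\in {\rm BMO}(X)$. By Corollary \ref{cor-cm} we have the pointwise bound $C_b(f)(x)\leq C\|b\|_{{\rm BMO}(X)}M^2 f(x)$. Since $w\in\mathcal{A}_p$ and the Hardy--Littlewood maximal operator $M$ is bounded on $L^p(X,wd\mu)$ for $w\in\mathcal{A}_p$, $1<p<\infty$, iterating gives that $M^2=M\circ M$ is also bounded on $L^p(X,wd\mu)$. Hence
\begin{align*}
\|C_b(f)\|_{L^p(X,wd\mu)}\leq C\|b\|_{{\rm BMO}(X)}\|M^2 f\|_{L^p(X,wd\mu)}\leq C\|b\|_{{\rm BMO}(X)}\|f\|_{L^p(X,wd\mu)}.
\end{align*}
This establishes one direction. (One may invoke here the known $\mathcal{A}_p$-boundedness of $M$ on spaces of homogeneous type; alternatively this direction already appears in \cite{HLY}.)

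For the \textbf{necessity}, assume $C_b$ is bounded on $L^p(X,wd\mu)$ and fix a ball $B$. As in the proof of Theorem \ref{thm-lp}, I would start from
$$\inf_{c\in\mathbb{R}}\frac{1}{\mu(B)}\int_B|b(x)-c|\,d\mu(x)\leq \frac{1}{\mu(B)^2}\int_B\int_B|b(x)-b(y)|\,d\mu(x)\,d\mu(y),$$
and use the pointwise bound $\frac{1}{\mu(B)}\int_B|b(x)-b(y)|\,d\mu(y)\leq C_b(\chi_B)(x)$ for $x\in B$. The key new ingredient is to insert the weight: write the inner average against $d\mu$ as an integral against $w^{1/p}w^{-1/p}\,d\mu$ and apply H\"older's inequality with exponents $p$ and $p'$, which yields
$$\frac{1}{\mu(B)}\int_B C_b(\chi_B)(x)\,d\mu(x)\leq \frac{1}{\mu(B)}\left(\int_B |C_b(\chi_B)(x)|^p w(x)\,d\mu(x)\right)^{1/p}\left(\int_B w(x)^{-p'/p}\,d\mu(x)\right)^{1/p'}.$$
Now bound the first factor by $\|C_b\|_{L^p(wd\mu)\to L^p(wd\mu)}\|\chi_B\|_{L^p(wd\mu)}=\|C_b\|\,w(B)^{1/p}$, and control the product $w(B)^{1/p}\big(\int_B w^{-p'/p}\big)^{1/p'}$ using the $\mathcal{A}_p$ condition $\big(\frac{1}{\mu(B)}\int_B w\big)\big(\frac{1}{\mu(B)}\int_B w^{-1/(p-1)}\big)^{p-1}\leq [w]_{\mathcal{A}_p}$, after noting $p'/p=1/(p-1)$. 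Collecting constants gives $\frac{1}{\mu(B)}\int_B C_b(\chi_B)\,d\mu\leq C[w]_{\mathcal{A}_p}^{1/p}\|C_b\|$, hence $\inf_c\frac{1}{\mu(B)}\int_B|b-c|\,d\mu\leq C$ uniformly in $B$, so $b\in{\rm BMO}(X)$ by the equivalent norm $\|\cdot\|'_{{\rm BMO}(X)}$.

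I expect the main (minor) obstacle to be purely bookkeeping: making sure the weighted H\"older step and the $\mathcal{A}_p$ estimate combine cleanly — in particular checking the exponent identity $p'/p=1/(p-1)$ and that $\|\chi_B\|_{L^p(X,wd\mu)}=w(B)^{1/p}$ — and, on the sufficiency side, citing the correct reference for the $\mathcal{A}_p$-boundedness of $M$ (equivalently of $M^2$) on spaces of homogeneous type. There is no genuine difficulty beyond adapting the two displayed computations in the proof of Theorem \ref{thm-lp}; everything else transfers verbatim.
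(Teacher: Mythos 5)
Your proposal is correct and follows essentially the same route as the paper's proof: for sufficiency, the pointwise bound $C_b f \le C\|b\|_{\rm BMO}M^2 f$ from Corollary \ref{cor-cm} together with the $\mathcal{A}_p$-boundedness of $M$ (hence $M^2$) on $L^p(X,wd\mu)$; for necessity, testing on $\chi_B$, using $\frac{1}{\mu(B)}\int_B|b(x)-b(y)|d\mu(y)\le C_b(\chi_B)(x)$, inserting $w^{1/p}w^{-1/p}$, applying H\"older with exponents $p,p'$, and absorbing the resulting product via the $\mathcal{A}_p$ condition. The only cosmetic difference is the order of operations in the necessity step (the paper applies H\"older to the double integral before invoking the pointwise bound, whereas you do the reverse), but the computation is the same.
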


\begin{proof}[Proof of Theorem \ref{thm-lp-weight}]
{\bf{Sufficient condition}}: It follows the same argument as in the unweighted case. If $b\in {\rm BMO}(X)$, by Corollary \ref {cor-cm} and the boundedness of $M$ on $L^p(X,wd\mu)$, we obtain that, for every $f\in L^p(X,wd\mu)$, $1<p<\infty$,
 \begin{align*}
 \left\|C_b(f)\right\|_{L^p(X,wd\mu)}\leq C\|b\|_{{\rm BMO}(X)}\left\|M^2f\right\|_{L^p(X,wd\mu)}\leq C\|b\|_{{\rm BMO}(X)}\left\|f\right\|_{L^p(X,wd\mu)}.
 \end{align*}

\noindent {{\bf Necessary condition}}: Assume that $C_b$ is bounded on $L^p(X,wd\mu)$. By H\"older's inequality, we get
\begin{align*}
&\inf_{c\in\mathbb R}\int_{B}|b(x)-c|d\mu(x)\\
&\leq \inf_{y\in B}\int_B|b(x)-b(y)|d\mu(x)\\
&\leq {1\over \mu(B)}\int_B\int_B |b(x)-b(y)|d\mu(x)d\mu(y)\\
&\leq {\left[w\right]_{{\mathcal{A}}_p}}^{1/p}w(B)^{-{1\over p}}\mu(B)\left[\int_B\left({1\over \mu(B)}\int_B|b(x)-b(y)|d\mu(y)\right)^p w(x)d\mu(x) \right]^{1\over p}
.\end{align*}

\noindent We use that for any $x\in B$, ${1\over \mu(B)}\int_B|b(x)-b(y)|d\mu(y)\leq C_b(\chi_B)(x)$ to obtain that

\begin{align*}
{1\over\mu(B)}\inf_{c\in\mathbb R}\int_{B}|b(x)-c|d\mu(x)&\leq w(B)^{-{1\over p}}\left(\int_B|C_b(\chi_B)(x)|^pw(x)d\mu(x) \right)^{1\over p}\\
&\leq  {\left[w\right]_{{\mathcal{A}}_p}}^{1/p}w(B)^{-{1\over p}} \|C_b\|_{L^p(X,wd\mu)\to L^p(X,wd\mu)}\Vert \chi_B\Vert_{L^p(X,wd\mu)}\\
&\leq {\left[w\right]_{{\mathcal{A}}_p}}^{1/p} \|C_b\|_{L^p(X,wd\mu)\to L^p(X,wd\mu)}
\end{align*}

\noindent Therefore, $b\in {\rm BMO}(X)$, and
$$\|b\|_{{\rm BMO}(X)}\leq C\|C_b\|_{L^p(X,wd\mu)\to L^p(X,wd\mu)}.$$
This completes the proof of the theorem.
\end{proof}

\section{Local version of the commutator theorems}
\label{sec:local}
\setcounter{equation}{0}

We will sketch the result in the case that $\mu(X)<\infty$ in this section. Note that in this setting, the BMO space  ${\rm BMO }(X)$ is defined as the set of all $b\in L^1(X)$ such that
$$ \sup_{ x\in X, r>0, B(x,r)\subset X} {1\over\mu(B(x,r))}\int_{B(x,r)} |b(y)-m_B(b)|d\mu(y)<\infty, $$

\noindent and the norm is defined as
$$\|b\|_{{\rm BMO }(X)}:=  \sup_{ x\in X, r>0, B(x,r)\subset X} {1\over\mu(B(x,r))}\int_{B(x,r)} |b(y)-m_B(b)|d\mu(y)+ \|b\|_{L^1(X)}. $$

\begin{theorem}\label{thm local}
Suppose  $\mu(X)<\infty$, $\diam(X)<\infty$ and ${\rm BMO }(X)$ is defined as above.
Then the results in Theorems \ref{thm-mp}, \ref{thm-msharp}, \ref{thm-mb}, \ref{thm-lp} and
\ref{thm-cb-iff}  hold in this setting.
\end{theorem}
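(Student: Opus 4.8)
The plan is to transfer the proofs of Section \ref{sec:pointwise} essentially verbatim, the only structural difference being that the underlying space is now a single large ball. First I would record the elementary consequences of the hypotheses. Since $\diam(X)<\infty$, for any fixed $x_0\in X$ and any $R_0>\diam(X)$ we have $B(x_0,R_0)=X$, so $X$ is itself a $\rho$-ball; every $\rho$-ball is a subset of $X$; and $X$ is the largest ball occurring in each of the suprema defining $M_p$, $M^\sharp$, $C_b$ and the ${\rm BMO}(X)$ norm. Moreover a $\rho$-ball of finite radius has finite measure, so $L^1_{loc}(X)=L^1(X)$; in particular the extra term $\|b\|_{L^1(X)}$ in the local definition of the norm is automatically finite for every locally integrable $b$ with bounded mean oscillation, so the local ${\rm BMO}(X)$ is, as a set of functions, the same one produced by the oscillation condition alone, merely equipped with a larger (still finite) norm.

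Next I would observe that all the auxiliary tools invoked in Section \ref{sec:pointwise} remain available on an arbitrary space of homogeneous type, hence when $\mu(X)<\infty$: the John--Nirenberg inequalities (Lemmas \ref{lem-jn1}, \ref{lem-jn2}), the $L^p$-characterisation of ${\rm BMO}$ (Lemma \ref{lem-bmoq}), the generalised Hölder inequality \eqref{holder}, the equivalence \eqref{mml}, Lemma \ref{lem-ml}, and the weak-$(1,1)$ and $L^q$-boundedness of the Hardy--Littlewood maximal operator. Consequently Proposition \ref{thm-mm}, Corollaries \ref{cor-cm}, \ref{cor-cb}, Lemmas \ref{mc} and \ref{lem-cbm}, and Proposition \ref{pro-mpb} go through unchanged; in the splitting $f=f\chi_{3B}+f\chi_{X\setminus 3B}$ used in Proposition \ref{thm-mm} one simply has $f\chi_{X\setminus 3B}=0$ (and the term $II=0$) once $3B\supseteq X$. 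This yields the sufficiency halves of Theorems \ref{thm-mp}--\ref{thm-cb-iff}, which only use the oscillation part of $\|b\|_{{\rm BMO}(X)}$; for Theorems \ref{thm-mb} and \ref{thm-cb-iff}, replacing the oscillation seminorm by the larger local norm only enlarges the admissible constants, which is harmless. For the necessity halves, the test functions $\chi_B$ lie in every $L^q(X)$ because $\mu(X)<\infty$, and the identities $[M_p,b](\chi_B)\chi_B=(b-M_{p,B}(b))\chi_B$ and $[M^\sharp,b](\chi_B)\chi_B=(\tfrac12 b-M^\sharp(b\chi_B))\chi_B$ on $B$, the comparison of $\int_B|b-m_B(b)|$ with the analogous integral over a sublevel set, and the bound $0\le b^-\le M_{p,B}(b)-b$, are all purely local and survive; once an oscillation bound is reached, the remaining requirement $b\in L^1(X)$ of the local definition is automatic.

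The one point requiring genuine care, and the main obstacle, is the appearance in the proof of Theorem \ref{thm-msharp} of an auxiliary ball $B_1\supsetneq B$ with $\mu(B_1)=2\mu(B)$, used to pin down $M^\sharp(\chi_B)\equiv\tfrac12$ on $B$ and to evaluate $m_{B_1}(b\chi_B)$: when $2\mu(B)>\mu(X)$ no such $B_1$ exists. I would handle this by splitting the suprema in conditions (iv)--(v) according to whether $\mu(B)\le\mu(X)/(2C_\mu)$ or not. For the small balls, $2\mu(B)\le\mu(X)$ and the original enlargement argument applies. For the large balls, $\mu(B)\approx\mu(X)$, so the oscillation of $b$ over $B$, over the relevant $M^\sharp$- or $M_{p,B}$-quantities, and over $X$ itself are mutually comparable up to dimensional constants; one then takes $B_1=X$ (legitimate since $X$ is a ball), computes $m_X(b\chi_B)=\tfrac{\mu(B)}{\mu(X)}m_B(b)$ and $M^\sharp(\chi_B)(x)=\tfrac{2\mu(B)(\mu(X)-\mu(B))}{\mu(X)^2}$ for $x\in B$, and runs the same chain of inequalities with these constants in place of $\tfrac12$. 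Finally I would remark that the counterexample in the Remark following Theorem \ref{thm-mb} is vacuous in this setting, as it forces $\mu(X)=\infty$, consistently with Theorem \ref{thm local} asserting only the positive statements.
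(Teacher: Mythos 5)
Your overall strategy coincides with the paper's: the sufficiency directions and the test-function identities transfer verbatim, and the only delicate point is the necessity of $b\in {\rm BMO}(X)$, which you (like the paper) handle by splitting into small and large balls. The paper's treatment of the large-ball regime is, however, considerably blunter and more robust than what you propose. For $r$ at least a fixed fraction of $\diam(X)$ it simply combines the crude pointwise bound $\int_B|b-m_B(b)|\,d\mu\le 2\|b\|_{L^1(X)}$ with the doubling estimate \eqref{upper dimension}, which gives $1/\mu(B)\le C_\mu(\diam(X)/r)^n/\mu(X)$, so that
$$\frac{1}{\mu(B)}\int_B|b(x)-m_B(b)|\,d\mu(x)\le 2C_\mu\Big(\frac{\diam(X)}{r}\Big)^n\frac{\|b\|_{L^1(X)}}{\mu(X)}$$
is bounded uniformly over all large balls. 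No test function and no analysis of $M^\sharp(\chi_B)$ is needed at all in this regime. You implicitly have this in hand (you note both that $b\in L^1(X)$ is automatic and that the oscillation over a large $B$ is comparable to the oscillation over $X$), so in effect you had the paper's argument available but did not deploy it.

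Your alternative for large balls — taking $B_1=X$ and running the $[M^\sharp,b](\chi_B)$ chain with the constant $\tfrac{2\mu(B)(\mu(X)-\mu(B))}{\mu(X)^2}$ replacing $\tfrac12$ — has a real gap. The quantity $M^\sharp(\chi_B)(x)$ is a supremum over all balls $B_1\ni x$ of $\tfrac{2\mu(B_1\setminus B)\mu(B_1\cap B)}{\mu(B_1)^2}$; choosing $B_1=X$ only produces a \emph{lower} bound, and a ball $B_1$ straddling the boundary of $B$ with $\mu(B_1\setminus B)\approx\mu(B_1\cap B)$ can push the supremum all the way up to $\tfrac12$, even when $B$ itself is large. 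Consequently $M^\sharp(\chi_B)$ need not be constant on $B$ when $2\mu(B)>\mu(X)$, the identity $[M^\sharp,b](\chi_B)=c\,b-M^\sharp(b\chi_B)$ on $B$ fails for any fixed $c$, and the chain of inequalities does not close as stated. The simplest repair is precisely the paper's: drop the test-function computation for large balls entirely and use the $L^1$ bound above, which the extra $\|b\|_{L^1(X)}$ term in the local ${\rm BMO}(X)$ norm exists to accommodate. Everything else in your proposal — the identification $L^1_{loc}(X)=L^1(X)$, the observation that $X$ is itself a ball, the transfer of the auxiliary lemmas and of Proposition \ref{thm-mm}, and the remark that the weak-$(1,1)$ counterexample is vacuous here — is correct and matches the paper.
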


\begin{proof}[Proof of Theorem \ref{thm local}]
The proof is similar to those of Theorems \ref{thm-mp}, \ref{thm-msharp}, \ref{thm-mb}, \ref{thm-lp} and
\ref{thm-cb-iff}, except the case when proving $b\in {\rm BMO}(X)$ in these theorems. By assumption,  there exists $R_0>0$ such that for any $B(x,r)\subset X$, we have $r<R_0$. We test the ${\rm BMO}(X)$ condition on the case of balls with big radius and small radius. In the case of balls with small radius, $r<R_0$, the proof is the same as in Theorems \ref{thm-mp}, \ref{thm-msharp}, \ref{thm-mb}, \ref{thm-lp} and
\ref{thm-cb-iff}. In the case of balls  with large radius, $r\geq R_0$. By \eqref {upper dimension}, we obtain that
\begin{align*}
{1\over \mu(B)} \int_B|b(x)-m_B(b)|d\mu(x)&\leq 2 C_\mu\Big({\diam(X)\over r}\Big)^n{1\over \mu(X)} \|b\|_{L^1(X)}\\
&\leq 2 C_\mu\Big({\diam(X)\over R_0}\Big)^n{1\over \mu(X)} \|b\|_{L^1(X)}.
\end{align*}

\noindent This finishes the proof of Theorem \ref{thm local}.
\end{proof}

We remark that a concrete example of the space of homogeneous type $(X,d,\mu)$
with $\mu(X)<\infty$ and $\diam(X)<\infty$ is the boundary of a bounded strictly pseudoconvex domain in $\C^n$, see for example the recent works in \cite{DLLWW}, \cite{KL2} and \cite{LS}. To be more precise, we recall the bounded domain $D$  from  \cite{LS} with defining function $\rho$, which means that $D=\{z\in\mathbb C^n: \rho(z)<0\}$ with $\rho: \mathbb C^n\rightarrow \mathbb R$ and boundary $bD$. Without lost of generality, assume that $\rho$ is strictly plurisubharmonic (see \cite[Ch. II Sec. 4]{Ra}). Let $\mathcal L_0(w, z)$ be the negative of the Levi polynomial at $w\in bD$, given by

$$ \mathcal L_0(w, z) = \langle \partial\rho(w),w-z\rangle -{1\over2} \sum_{j,k} {\partial^2\rho(w) \over \partial w_j \partial w_k} (w_j-z_j)(w_k-z_k),  $$

\noindent where $\partial \rho(w)=\left({\partial\rho\over\partial w_1}(w),\cdots, {\partial\rho\over\partial w_n}(w)\right)$ and we have used the notation $\langle\eta,\zeta\rangle=\sum_{j=1}^n\eta_j\zeta_j$ for $\eta=(\eta_1,\cdots, \eta_n), \zeta=(\zeta_1,\dots,\zeta_n)\in\mathbb C^n$. The strict plurisubharmonicity of $\rho $ implies that

$$  2\operatorname{ Re} \mathcal L_0(w, z) \geq -\rho(z)+c |w-z|^2,  $$

\noindent for some $c>0$, whenever $w\in bD$ and $z\in \bar D$ is sufficiently close to $w$. Then a modification of $\mathcal L_0$ is as follows
\begin{align}\label{g0}
    g_0(w,z) = \chi \mathcal L_0+ (1-\chi) |w-z|^2.
\end{align}

\noindent Here $\chi=\chi(w,z)$ is a $C^\infty$-cutoff function with $\chi=0$ when $|w-z|\leq \mu/2$ and $\chi=1$ if $|w-z|\geq \mu$. Then for $\mu$ chosen sufficiently small (and then kept fixed throughout), we have that $ \operatorname{ Re}g_0(w,z)\geq c(-\rho(z)+ |w-z|^2)$ for $z$ in $\bar D$ and $w$ in $bD$, with $c$  a positive constant.

Note that the modified Levi polynomial $g_0$ has no smoothness beyond continuity in the variable $w$. So in \cite{LS}, for each $\epsilon>0$ the authors considered a variant $g_\epsilon$ defined as follows: let $\{\tau_{jk}^\epsilon(w)\}$ be an $n\times n$-matrix  of $C^1$ functions such that

$$\sup_{w\in bD}\Big|{\partial^2\rho(w)\over\partial w_j\partial w_k}- \tau_{jk}^\epsilon(w)\Big|\leq\epsilon,\quad 1\leq j,k\leq n.$$

\noindent Set

$$ \mathcal L_\epsilon(w, z) = \langle \partial\rho(w),w-z\rangle -{1\over2} \sum_{j,k}\tau_{jk}^\epsilon(w) (w_j-z_j)(w_k-z_k),  $$

\noindent and define

$$    g_\epsilon(w,z) = \chi \mathcal L_\epsilon+ (1-\chi) |w-z|^2, \quad z,w\in\mathbb C^n.  $$

\noindent Now $g_\epsilon$ is $C^1$ in $w$ and $C^\infty$ in $z$. We note that

$$\left|g_0(w,z)-g_\epsilon(w,z) \right|\lesssim \epsilon |w-z|^2.$$

\noindent We shall always assume that $\epsilon$ is sufficiently small, we then have

$$\left|g_\epsilon(w,z)\right|\approx\left|g_0(w,z) \right|,$$

\noindent where the equivalence $\approx$ is independent of $\epsilon$. Now on the boundary $bD$, define the function
${\tt d}(w,z) = |g_0(w,z)|^{1\over2}$. According to \cite[Proposition 3]{LS},  ${\tt d}$ satisfies that for all $w,z,z'\in bD$,

(a)  ${\tt d}(w,z)=0 $ iff $w=z$;

(b)  ${\tt d}(w,z)\approx {\tt d}(z,w)$;

(c)  ${\tt d}(w,z)\lesssim {\tt d}(w,z') +{\tt d}(z',z)$.

Next we recall the Leray--Levi measure $d\lambda$ on $bD$ defined via the $(2n-1)$-form
$$ {1\over (2\pi i)^n} \partial \rho \wedge (\bar\partial \partial \rho)^{n-1}. $$
To be more precise, we have the linear functional
\begin{align}\label{lambda}
f\mapsto {1\over (2\pi i)^n} \int_{bD} f(w) j^*(\partial \rho \wedge (\bar\partial \partial \rho)^{n-1})(w)=: \int_{bD} f(w)d\lambda(w)
\end{align}
defined for $f\in C(bD)$, and this defines the measure $d\lambda$.
Then one also has
$$   d\lambda(w) = {1\over (2\pi i)^n} j^*(\partial\rho \wedge (\bar\partial \partial\rho)^{n-1}) (w) =\Lambda(w)d\sigma(w),
 $$
where $j^*$ denotes the pullback under the inclusion $j:bD\hookrightarrow\mathbb C^n$, $d\sigma$ is the induced Lebesgue measure on $bD$ and $\Lambda(w)$ is a continuous function such that
$ c\leq \Lambda(w)\leq \tilde c, w\in bD$, with $c$ and $\tilde c$ two positive constants.

\begin{example}
Let $ (bD, {\tt d}, \lambda) $ be defined as above. Then it is a specific space of homogeneous type with $\lambda(bD)<\infty$ and $\diam(bD)<\infty$.

We also recollect the boundary balls $ B_r(w) $ determined via the quasidistance ${\tt d }$ and their measures, i.e.,
\begin{align}\label{ball}
B_r(w) =\{ z\in bD:\ {\tt d}(w,z)<r \}, \quad {\rm where\ } w\in bD.
\end{align}
According to \cite[p. 139]{LS},
\begin{align}\label{lambdab}
c_\lambda^{-1} r^{2n}\leq \lambda\big(B_r(w) \big)\leq c_\lambda r^{2n},\quad 0<r\leq 1,
\end{align}
for some $c_\lambda>1$.
\end{example}







\bigskip
\noindent \textbf{Acknowledgement:} This work was supported by Natural Science Foundation
of China (Grant Nos. 11671185, 11701250 and 11771195) and Natural Science Foundation of Shandong
Province (Grant Nos. ZR2018LA002 and ZR2019YQ04).

\bigskip



\bigskip
\bigskip
\bigskip

\bibliographystyle{amsplain}

\begin{thebibliography}{10}





\bibitem{AGKM}
   {\sc M. Agcayazi, A. Gogatishvili, K. Koca and R. Mustafayev},
A note on maximal commutators and commutators of maximal functions.
\emph{J. Math. Soc. Japan}~ {\bf67} (2) (2015), 581--593.

\bibitem{Aimar2007} %
    {\sc H.~Aimar, A.~Bernardis, and B. Iaffei}, %
    Multiresolution approximations and unconditional bases on weighted
    Lebesgue spaces on spaces of homogeneous type, %
    \emph{J. Approx. Theory}~\textbf{148} (1) (2007), 12--34.


\bibitem{Al}
{\sc A. M. Alphonse}, An end point estimate for maximal commutators, {\it J. Fourier Anal. Appl.} {\bf 6} (4) (2000), 449--456.


\bibitem{AuHyt} %
    {\sc P. Auscher and T. Hyt\"onen}, %
    Orthonormal bases of regular wavelets in spaces of homogeneous type, %
    \emph{Appl. Comput. Harmon. Anal.}~\textbf{34} (2) (2013), 266--296.


\bibitem{BMR}
 {\sc J. Bastero, M. Milman and F. J. Ruiz}, Commutators for the maximal and sharp functions, {\it Proc. Amer. Math. Soc.} {\bf128} (2000), 3329--3334.

\bibitem{BMS} {\sc L. Boutet de Monvel and J. Sj\"ostrand},  Sur la singularit\'e des noyaux de Bergman et de Szeg\"o, {\it Ast\'erisque},   {\bf34-35} (1976), 123--164.


\bibitem{CLW}
    {\sc P.~Chen, J.~Li, and L.A.~Ward}, %
    {\rm BMO}\ from dyadic {\rm BMO}\ via expectations on product spaces
    of homogeneous type, %
    \emph{J. Funct. Anal.}~\textbf{265} (2013), 2420--2451.



\bibitem{CS}
{\sc W. Chen and E. Sawyer},
Endpoint estimates for commutators of singular integrals on spaces of homogeneous type,
{\it J. Math. Anal. Appl.}  {\bf 282} (2) (2003), 553--566.



\bibitem{CRW}
{\sc R. Coifman, R. Rochberg and G. Weiss}, Factorization theorems for Hardy spaces in several
variables,  {\it Ann. Math.} {\bf 103} (1976), 611--635.



\bibitem{CW1}
    {\sc R.R.~Coifman and G.~Weiss}, %
    \emph{Analyse harmonique non-commutative sur certains
    espaces homog\`enes. \'Etude de certaines int\'egrales
    singuli\`eres.} (French), %
    Lecture Notes in Mathematics, Vol.~242, Springer--Verlag,
    Berlin, 1971.

\bibitem{CW2}
    {\sc R.R.~Coifman and G.~Weiss}, %
    Extensions of Hardy spaces and their use in analysis, %
    \emph{Bull.\ Amer.\ Math.\ Soc.}~\textbf{83} (1977), 569--645.




%

\bibitem{DJS} %
    {\sc G.~David, J.-L.~Journ\'e, and S.~Semmes}, %
    { Calder\'on--Zygmund operators, para-accretive functions and
    interpolation}, %
  \emph{ Rev.\ Mat.\ Iberoamericana}~ {\bf 1} (4) (1985), 1--56.




\bibitem{DH} %
    {\sc D.G.~Deng and Y.~Han}, %
    {Harmonic analysis on spaces of homogeneous type}, %
    with a preface by Yves Meyer, %
    Lecture Notes in Mathematics, Vol.~1966, %
    Springer-Verlag, Berlin, 2009. %



\bibitem{DLLWW}
 {\sc X. T. Duong, M. Lacey, J. Li, B. D. Wick and Q. Y. Wu}, Commutators of Cauchy type integrals for domains in $\C^ n $ with minimal smoothness, {\it Indiana University Math. J.} in press.





\bibitem{EKM}
{\sc D. E. Edmunds, V. Kokilashvili and A. Meskhi},
Weight inequalities for singular integrals defined on spaces of homogeneous and nonhomogeneous type,
{\it Georgian Math. J.} {\bf 8} (1) (2001), 33--59.



\bibitem{FEF} C. Fefferman, The Bergman kernel and biholomorphic mappings of pseudoconvex domains. {\it Invent. Math.}, {\bf 26} (1974), 1--65.


\bibitem{GR}
{\sc J. Garc\'ia-Cuerva and J. L. Rubio de Francia}, Weighted norm inequalities
and related topics, North Holland Math. Studies 116, North Holland, Amsterdam, (1985).


\bibitem{GHST}
{\sc J. Garc\'ia-Cuerva, E. Harboure, C. Segovia, J.L. Torrea}, Weighted norm inequalities for commutators of strongly singular integrals, {\it Indiana Univ. Math. J.}
{\bf40} (1991), 1397--1420.


 \bibitem{GLY}
{\sc L. Grafakos, L. G. Liu and D. C. Yang},  Vector-valued singular integrals and maximal functions on spaces of homogeneous type, {\it Math. Scand.} {\bf104} (2009), no. 2, 296--310.


\bibitem{H1} %
    {\sc Y.~Han}, %
    {Calder\'on-type reproducing formula and the $T_{b}$
    theorem}, %
   {\it Rev. Mat. Iberoamericana}~\textbf{10} (1994), 51--91.

\bibitem{H2} %
    {\sc Y.~Han}, %
    { Plancherel-P\^olya type inequality on spaces of homogeneous type and its
    applications}, %
   {\it Proc. Amer. Math. Soc.}~\textbf{126} (1998), no.~11, 3315--3327.

\bibitem{HLL} %
    {\sc Y.~Han, J.~Li, and C.-C.~Lin}, %
    { Criterions of the $L^2$ boundedness and sharp endpoint
    estimates for singular integral operators on product spaces
    of homogeneous type}, %
    {\it Ann. Scuola Norm. Sup. Pisa Cl. Sci.} ~\textbf{16} (3) (2016), 845--907.

%
%



\bibitem{HS} %
    {\sc Y.~Han and E.T.~Sawyer}, %
    {Littlewood--Paley theory on spaces of homogeneous type
    and the classical function spaces}, %
    {\it Mem. Amer. Math. Soc.}~{\bf 530} (110) (1994), vi + 126 pp.


\bibitem{HT}
  {\sc J. Hart and R. H. Torres}, John--Nirenberg inequalities and weight invariant BMO spaces, {\it J. Geom. Anal.} {\bf 29} (2) (2019), 1608-1648.

\bibitem{HHLLYY}
  {\sc Z. He, Y. Han, J. Li, L. Liu, D. Yang and W. Yuan}, A complete real-variable theory of Hardy spaces on
spaces of homogeneous type, {\it J. Fourier Anal. Appl.} in press.


\bibitem{Ho}
{\sc Kwok-Pun Ho}, Characterizations of BMO spaces by ${A_p}$ weights and $p$-convexity, {\it Hiroshima Math. J.} {\bf 41} (2011), 153-165.

\bibitem{HLY}
   {\sc G. Hu, H. Lin and D. Yang},
   {Commutators of the Hardy-Littlewood maximal operator with BMO symbols on spaces of homogeneous type},
    {\it Abstr. Appl. Anal.}~ {\bf 2008} (2008), Art. ID 237937.


\bibitem{HY}
{\sc G. Hu and D. Yang}
Maximal commutators of BMO functions and singular integral operators with non-smooth kernels on spaces of homogeneous type, {\it J. Math. Anal. Appl.}  {\bf 354} (1) (2009), 249--262.


\bibitem{HK} %
    {\sc T.~Hyt\"onen and A.~Kairema}, %
    Systems of dyadic cubes in a doubling metric space, %
    \emph{Colloq. Math.}~\textbf{126} (2012), no.~1, 1--33.


\bibitem{KL2}{\sc S. G. Krantz and S.-Y. Li}, Boundedness and compactness of integral operators on spaces of homogeneous type and applications, I\!I, {\it J. Math. Anal. Appl.}, {\bf258} (2001), 642--657.



\bibitem{Kr}  {\sc M. Kronz}, Some function spaces on spaces of homogeneous type, {\it Manuscripta Math.} {\bf 106} (2) (2001), 219--248.





\bibitem{LS} {\sc L. Lanzani and E. Stein}, The Cauchy--Szeg\"o projection for domains in $\mathbb C^n$ with minimal smoothness,
{\it Duke Math. J.} {\bf166} (2017), 125--176.


\bibitem{Le}
{\sc A. K. Lerner}, On weighted estimates of non-increasing rearrangements, {\it East J. Approx.}  {\bf 4} (1998), 277--290.



\bibitem{MS} %
    {\sc R.A.~Mac\'ias and C.~Segovia}, %
    { Lipschitz functions on spaces of homogeneous type}, %
   {\it Adv. Math.}~\textbf{33} (1979), 257--270.


 \bibitem{MiS}
{\sc M. Milman and T. Schonbek}, {Second order estimates in interpolation theory and applications},
{\it Proc. Amer. Math. Soc. } ~{\bf110} (4) (1990), 961--969.


\bibitem{NRSW} {\sc A. Nagel, J.-P. Rosay}, E. M. Stein and S. Wainger, Estimates for the Bergman and Szeg\"o kernels in $\C^2$, {\it Ann. Math. } {\bf 129} (2) (1989), 113--149.


\bibitem{NS1} %
    {\sc A.~Nagel and E.M.~Stein}, %
    On the product theory of singular integrals, %
    \emph{Rev. Mat. Iberoamericana}~{\bf 20} (2004), 531--561.

\bibitem{NS2} %
    {\sc A.~Nagel and E.M.~Stein}, %
    The $\bar{\partial}_b$-complex on decoupled boundaries in
    $\mathbb{C}^n$, %
    \emph{Ann. Math.}~{\bf 164} (2) (2006), 649--713.


 \bibitem{PS}
 {\sc G. Pradolini, O. Salinas}, Commutators of singular integrals on spaces of homogeneous type, {\it Czechoslovak Math. J.} {\bf 57} (1) (2007) 75--93.

\bibitem{Ra} {\sc R. M. Range}, \emph{Holomorphic functions and integral representations in several complex variables},
Graduate Texts in Mathematics, 108, Springer-Verlag, New York, 1986.


\bibitem{RR}
{\sc M. M. Rao and Z. D. Ren}, \emph{Theory of Orlicz Spaces}, vol. 146 of Monographs and Textbooks in Pure and
Applied Mathematics, Marcel Dekker, New York, 1991.

\bibitem {STE2} {\sc E. Stein}, \emph{Boundary Behavior of Holomorphic Functions of Several Complex Variables}, Princeton University Press, Princeton, 1972.



\end{thebibliography}

\end{document}